% SIAM Article Template
\documentclass[final,onefignum,onetabnum]{siamart190516}

% Information that is shared between the article and the supplement
% (title and author information, macros, packages, etc.) goes into
% ex_shared.tex. If there is no supplement, this file can be included
% directly.

% SIAM Shared Information Template
% This is information that is shared between the main document and any
% supplement. If no supplement is required, then this information can
% be included directly in the main document.

% Packages and macros go here
\usepackage{lipsum}
\usepackage{amsfonts}
\usepackage{graphicx}
\usepackage{epstopdf}
\usepackage{algorithmic}

% From TAC version
%\usepackage[dvipsnames]{xcolor}
\usepackage{diagbox}
\usepackage{amsmath}

\usepackage{etoolbox}

\patchcmd{\SetTagPlusEndMark}{$}{}{}{}
\patchcmd{\SetTagPlusEndMark}{$}{}{}{}

\usepackage{amsmath,bbm}
\usepackage{amssymb,multirow,verbatim,comment}
\usepackage{acronym,wrapfig,plain,mathrsfs,enumerate,relsize,color}
\usepackage{tabularx}
\newsiamthm{assumption}{Assumption}
\newsiamremark{example}{Example}
\usepackage{subfig}
\usepackage{boxedminipage}
\usepackage[justification=centering]{caption}
\def\bko{{\rm 1\kern-.17em l}}
\usepackage{wrapfig}
\usepackage{hyperref}
\newcommand{\proj}[2][] {{\mathcal{P}}_{{#1}} {\left(#2\right)}}

\def\argmin{\mathop{\rm argmin}}

\newcommand{\prob}[1]{\mathsf{p}_#1} 
\newcommand{\EXP}[1]{\mathsf{E}\!\left[#1\right]}

\allowdisplaybreaks

%%%%%%%%%%%%

\ifpdf
  \DeclareGraphicsExtensions{.eps,.pdf,.png,.jpg}
\else
  \DeclareGraphicsExtensions{.eps}
\fi

% Add a serial/Oxford comma by default.

% Used for creating new theorem and remark environments
\newsiamremark{remark}{Remark}
\newsiamremark{hypothesis}{Hypothesis}
\crefname{hypothesis}{Hypothesis}{Hypotheses}
\newsiamthm{claim}{Claim}

% Sets running headers as well as PDF title and authors
\headers{A method with complexity for optimization with VI constraints}{H.~D.~Kaushik and F.~Yousefian}

% Title. If the supplement option is on, then "Supplementary Material"
% is automatically inserted before the title.
\title{A method with convergence rates for optimization problems with variational inequality constraints\thanks{Submitted to the editors DATE. A very preliminary version of this work appeared in the \textit{2019 American Control Conference (ACC)}, IEEE, Philadelphia, PA, USA, 2019, pp. 3420--3425. \url{https://doi.org/10.23919/ACC.2019.8815256}.
\funding{Farzad Yousefian gratefully acknowledges the support of the NSF through CAREER grant ECCS-1944500.}}}

% Authors: full names plus addresses.
\author{Harshal~D.~Kaushik\thanks{School of Industrial Engineering \& Management, Oklahoma State University, Stillwater, OK 74074, USA
  (\email{harshal.kaushik@okstate.edu}, \email{farzad.yousefian@okstate.edu}).} \and Farzad~Yousefian\footnotemark[2]}

\usepackage{amsopn}

%%% Local Variables: 
%%% mode:latex
%%% TeX-master: "ex_article"
%%% End: 

%%%%%% added from the internet to fix the issue with exceeding margins
%\tolerance=1
%\emergencystretch=\maxdimen
%\hyphenpenalty=10000
%\hbadness=10000
%%%%%

% Optional PDF information
%\ifpdf
%\hypersetup{
%  pdftitle={An Example Article},
%  pdfauthor={D. Doe, P. T. Frank, and J. E. Smith}
%}
%\fi

% The next statement enables references to information in the
% supplement. See the xr-hyperref package for details.

\externaldocument{ex_supplement}

% FundRef data to be entered by SIAM
%<funding-group specific-use="FundRef">
%<award-group>
%<funding-source>
%<named-content content-type="funder-name"> 
%</named-content> 
%<named-content content-type="funder-identifier"> 
%</named-content>
%</funding-source>
%<award-id> </award-id>
%</award-group>
%</funding-group>

\begin{document}
\sloppy

\maketitle

% REQUIRED
\begin{abstract}
We consider a class of optimization problems with Cartesian variational inequality (CVI) constraints, where the objective function is convex and the CVI is associated with a monotone mapping and a convex Cartesian product set. This mathematical formulation captures a wide range of optimization problems including those complicated by the presence of equilibrium constraints, complementarity constraints, or an inner-level large scale optimization problem. In particular, an important motivating application arises from the notion of efficiency estimation of equilibria in multi-agent networks, e.g., communication networks and power systems. In the literature, the iteration complexity of the existing solution methods for optimization problems with CVI constraints appears to be unknown. To address this shortcoming, we develop a first-order method called averaging randomized block iteratively regularized gradient (aRB-IRG). The main contributions include: (i) In the case where the associated set of the CVI is bounded and the objective function is nondifferentiable and convex, we derive new non-asymptotic suboptimality and infeasibility convergence rate statements {in a mean sense. We also obtain deterministic variants of the convergence rates when we suppress the randomized block-coordinate scheme}. Importantly, this paper appears to be the first work to provide these rate guarantees for this class of problems. (ii) In the case where the CVI set is unbounded and the objective function is smooth and strongly convex, utilizing the properties of the Tikhonov trajectory, we establish the global convergence of aRB-IRG in an almost sure and a mean sense. We provide the numerical experiments for computing the best Nash equilibrium in a networked Cournot competition model.
\end{abstract}

% REQUIRED
\begin{keywords}
  first-order methods, variational inequalities, complexity analysis, efficiency of Nash equilibria, iterative regularization, randomized block-coordinate 
\end{keywords}

% REQUIRED
\begin{AMS}
  65K15, 49J40, 90C33, 91A10, 90C06
\end{AMS}

\section{Introduction}
Traditionally, the mathematical models and algorithms for constrained optimization have been much focused on the cases where the functional constraints are in the form of inequalities, equalities, or easy-to-project sets. However, in a breadth of emerging applications in the control theory and economics, the system constraints are too complex to be characterized in those forms. This may arise in several network application domains where the optimization model is complicated by the presence of equilibrium constraints, complementarity constraints, or an inner-level large scale optimization problem. Accordingly, the goal in this paper lies in addressing the following constrained optimization problem:
\begin{equation}\label{prob:main}
\tag{$\text{P}^f_{\text{VI}}$} 
\begin{aligned}
& {\text{minimize}}
& & f(x) \\
& \text{subject to}
& & x \in \text{SOL}(X,F).
\end{aligned}
\end{equation}
Here, $f:\mathbb{R}^n \to \mathbb{R}$ is a convex function and $X \subseteq \mathbb{R}^n$ is given as a Cartesian product, i.e., $ X \triangleq \prod_{i = 1}^{d}  X_i $, where $X_i \subseteq\mathbb{R}^{n_i}$ is convex for all $i=1,\dots, d$ and $\sum_{i=1}^dn_i=n$. We consider $F: X \to \mathbb{R}^n$ to be a monotone mapping. The term $\text{SOL}(X,F)$ denotes the solution set of the variational inequality $\text{VI}(X,F)$ defined as follows:  A vector $x \in X$ is said to be a solution to $\text{VI}(X,F)$ if for any $y \in X$, we have $F(x)^T(y-x) \geq 0$. Variational inequalities, first introduced in late 1950s, are an immensely powerful mathematical tool that can serve as a unifying framework for capturing a wide range of applications arising in operations research, finance, and economics (cf. \cite{FacchineiPang2003, Rockafellar98,Lan-VI-13,wang2015}). Importantly, as it will be discussed shortly, the block structure of the set $X$ allows for addressing Nash games as well as high-dimensional optimization problems. We note that the problem \cref{prob:main} can represent a variety of the standard problems in optimization and VI regimes. For example, when $F(x) := 0_{n}$, the problem \cref{prob:main} is equivalent to the canonical optimization problem $\min_{x \in X} f(x)$. Also, when $f(x):=0$, the problem \cref{prob:main} is equivalent to solving $\text{VI}(X,F)$. More detailed examples that can be reformulated as the problem \cref{prob:main} are presented below.

\subsection{Motivating examples}
%In the following, we present four classes of optimization problems that can be reformulated as the problem \cref{prob:main}:
 \begin{example}[Efficiency estimation of equilibria]\label{ex:efficiency}
The main motivating application arises from the notion of \textit{efficiency of equilibria}  in multi-agent networks, including communication networks and power systems. In the noncooperative regimes, the system behavior is governed by a collection of decisions (i.e., equilibrium) made by a set of independent and {self-interested} agents. As a result of this noncooperative behavior (i.e., \textit{game}) among the agents, the global performance of the system may become worse than the case where the agents cooperatively seek an \textit{optimal} decision. A well-known example is the Prisoners' Dilemma where the costs of the players incurred by the Nash equilibrium are superior to their costs when they cooperate~\cite{OsborneRubinstein1994}. Indeed, it has been well-received in economics and computer science communities that Nash equilibria of a game may not attain full efficiency. This perception has led to a surge of research for understanding the quality of an equilibrium in noncooperative games. In particular, addressing this question becomes imperative for network design~\cite{PoS08} in the areas of routing~\cite{Correa04} and load balancing~\cite{Rough04}. In such networks, a {protocol designer} seeks \textit{the best equilibrium} with respect to a global performance measure, i.e., the function $f$ in \cref{prob:main}. To this end, the notion of  \textit{price of stability} is defined as the ratio of the best objective function value over the set of equilibria to the best objective function value under no competition \cite{NisanBook2007}. %Another perspective in characterizing the efficiency of equilibria is centerned on the worst-case performance, which has much been applied in network applications where there is no protocol designer. In such applications, the goal is to analyze the worst equiribrium. In characterizing the worst-case performance, the notation of ``coordinate ratio'' was introduced in \cite{} which was later called ``price of anarchy'' (PoA) \cite{} and has been applied in the work on algorithmic game theory \cite{}. The PoA ratio associated with a game is defined as the ratio between the worst objective function value over the set of equilibria and that of an optimal outcome (see Ch. 17 in \cite{} for details). 
 In regard to the choice of the objective function $f$ in \cref{prob:main}, different approaches have been considered, including the \textit{utilitarian} function and the \textit{egalitarian} function \cite{NisanBook2007}. In the utilitarian approach, $f$ is defined as the summation of the individual objective functions of the agents, while in the egalitarian approach, the maximum of the individual cost functions is considered. In the context of network resource allocation where a monetary value is measured, the utilitarian approach is also referred to as Marshallian aggregate surplus (e.g., see \cite{JohariThesis}). In the following, we describe the details for the problem of selecting the best equilibrium in Nash games, where we employ the utilitarian approach. Consider a canonical Nash game among $d$ players where the $i^{\text{th}}$ player is associated with a strategy $x^{(i)} \in X_i\subseteq \mathbb{R}^{n_i}$ and a cost function $ g_i\left(x^{(i)};x^{(-i)}\right) $, where $x^{(-i)}$ denotes the collection of actions of other players. Nash games arise in a wide range of problems including communication networks~\cite{alpcan02game,alpcan03distributed,yin09nash2}, cognitive radio networks~\cite{aldo1,scutari10monotone,KoshalNedichShanbhag2013}, and power markets~\cite{KShKim11,KShKim12,SIGlynn11}. The game is defined formally as the following collection of problems for all $i =1,\ldots,d$:
\begin{align}\label{eqn:game}
& \text{minimize}_{x^{(i)}} \quad g_i\left(x^{(i)};x^{(-i)}\right) \tag*{P$_i\left(x^{(-i)}\right)$}\\
& \text{subject to}  \quad x^{(i)} \in {X}_i\notag.
\end{align}
A Nash equilibrium (NE) is a tuple of strategies $x^*\triangleq \left({x^*}^{(1)};{x^*}^{(2)};\ldots;{x^*}^{(d)}\right)$ where no player can obtain a lower cost by deviating from his own strategy, given that the strategies of the other players remain unchanged. It is known that (cf. {Proposition 1.4.2}  \cite{FacchineiPang2003}) when for all $i$, $X_i$ is a closed convex set and $g_i$ is a differentiable convex function with respect to $x^{(i)}$, the resulting equilibrium conditions of the Nash game given by \cref{eqn:game}, are compactly captured by a Cartesian $\text{VI}(X,F)$ where $X \triangleq \prod_{i=1}^d X_i$ and $F(x) \triangleq  (F_1(x);\ldots; F_d(x))$ with $F_i(x)\triangleq \nabla_{x^{(i)}}  g_i\left(x^{(i)};x^{(-i)}\right) $. The set $\text{SOL}(X,F)$ will then represent the set of Nash equilibria to the game \cref{eqn:game}. The best NE problem employing the utilitarian approach is formulated as follows:
\begin{equation}\label{prob:best_NE}
\begin{aligned}
& {\text{minimize}} 
& & \sum_{i=1}^d g_i\left(x^{(i)};x^{(-i)}\right) \\
& \text{subject to}
& & x \in \text{SOL}\left( \prod_{i=1}^d X_i,\left(\nabla_{x^{(1)}}  g_1\left(x^{(1)};x^{(-1)}\right); \ldots; \nabla_{x^{(d)}}  g_d\left(x^{(d)};x^{(-d)}\right)\right)\right).
\end{aligned}
\end{equation}
In \cref{sec:experiments}, we solve the model \cref{prob:best_NE} for a class of networked Nash-Cournot games. 
\end{example}

\begin{example}[High-dimensional constrained convex optimization]\label{ex:high_d_opt} 
Another class of problems that can be captured by the model \cref{prob:main} is as follows:
\begin{equation}\label{prob:subclass_constrained_opt}
\begin{aligned}
& {\text{minimize}} & & f(x) \\
& \text{subject to}  & &h_j(x) \leq 0\quad \hbox{for all } j=1,\ldots,J\\
& & & Ax=b, \quad   x \in X \triangleq \prod\nolimits_{i=1}^dX_i,
\end{aligned}
\end{equation}
where $x \in \mathbb{R}^n$, $A \in \mathbb{R}^{m\times n}$, $b \in \mathbb{R}^m$, $h_j:\mathbb{R}^n\to \mathbb{R}$ for all $j$, and $n$ is possibly very large. In the following, we show a case where the problem \cref{prob:subclass_constrained_opt} can be cast as \cref{prob:main}: 
\begin{lemma}\label{Lemma 1.3} Let the problem \cref{prob:subclass_constrained_opt} be feasible and $h_j(x)$ be a continuously differentiable convex function for all $j$. Let the set $X_i \in \mathbb{R}^{n_i}$ be nonempty, closed, and convex for all $i$. Then, the problem \cref{prob:subclass_constrained_opt} is equivalent to \cref{prob:main} where $F:\mathbb{R}^n \to \mathbb{R}^n$ is defined as    
%\begin{align}\label{eqn:def_F_const_opt_problem}
$F(x) \triangleq A^T(Ax-b)+\sum_{j=1}^J\max\{0,h_j(x)\}\nabla h_j(x)$.
%\end{align}
\end{lemma}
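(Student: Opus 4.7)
The plan is to construct a single nonnegative, convex, continuously differentiable merit function $\phi$ whose zero set on $X$ coincides with the feasible set of \cref{prob:subclass_constrained_opt}, and whose gradient is exactly $F$. Specifically, I would set
\begin{equation*}
\phi(x) \;\triangleq\; \tfrac{1}{2}\|Ax-b\|^2 \;+\; \tfrac{1}{2}\sum_{j=1}^{J}\bigl(\max\{0,h_j(x)\}\bigr)^{2}.
\end{equation*}
The first step is to verify that $\phi$ is convex and $C^{1}$ on $\mathbb{R}^n$. Convexity follows because $\|Ax-b\|^2$ is convex, and each $\max\{0,h_j(x)\}$ is nonnegative and convex (as the composition of the nondecreasing convex map $t\mapsto\max\{0,t\}$ with the convex $h_j$), hence its square is convex as well. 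Differentiability of $\tfrac12(\max\{0,t\})^2$ (with derivative $\max\{0,t\}$) together with the $C^{1}$ assumption on $h_j$ and the chain rule give $\nabla\phi(x) = A^{T}(Ax-b)+\sum_{j=1}^{J}\max\{0,h_j(x)\}\nabla h_j(x) = F(x)$.

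Next I would characterize the feasible set of \cref{prob:subclass_constrained_opt} as $\argmin_{x\in X}\phi(x)$. Since $\phi\ge 0$, and since the feasibility assumption guarantees the existence of some $\bar x\in X$ with $A\bar x=b$ and $h_j(\bar x)\le 0$ for all $j$ (so that $\phi(\bar x)=0$), the minimum of $\phi$ over $X$ is zero. Moreover, $\phi(x)=0$ forces both $Ax=b$ and $\max\{0,h_j(x)\}=0$ for every $j$, i.e., $h_j(x)\le 0$. Therefore
\begin{equation*}
\bigl\{x\in X:\, h_j(x)\le 0\ \forall j,\ Ax=b\bigr\} \;=\; \argmin_{x\in X}\phi(x).
\end{equation*}

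The third step is to identify $\argmin_{x\in X}\phi(x)$ with $\text{SOL}(X,F)$. Because $\phi$ is convex and differentiable and $X$ is convex, the standard first-order optimality condition states that $x\in X$ minimizes $\phi$ over $X$ if and only if $\nabla\phi(x)^{T}(y-x)\ge 0$ for all $y\in X$, which is exactly the definition of $x\in\text{SOL}(X,F)$ since $\nabla\phi=F$. Combining the previous two steps shows that \cref{prob:subclass_constrained_opt} and \cref{prob:main} optimize the same objective $f$ over the same admissible set, proving the equivalence. I would also remark that the monotonicity of $F$, needed for the framework of \cref{prob:main}, is automatic here since $F=\nabla\phi$ is the gradient of a convex function.

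The only potentially subtle step is the joint verification that $\tfrac12(\max\{0,h_j(\cdot)\})^{2}$ is simultaneously convex and $C^{1}$: continuous differentiability of $t\mapsto\tfrac12(\max\{0,t\})^2$ at $t=0$ must be checked (the left and right derivatives agree at $0$), and convexity uses that one composes a nondecreasing convex outer function with a convex inner function. Everything else is a direct application of definitions and the first-order optimality characterization of convex minimization over a convex set.
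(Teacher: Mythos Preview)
Your proposal is correct and follows essentially the same approach as the paper: define the merit function $\phi(x)=\tfrac{1}{2}\|Ax-b\|^2+\tfrac{1}{2}\sum_j(\max\{0,h_j(x)\})^2$, verify that $\phi$ is convex and $C^1$ with $\nabla\phi=F$, identify the feasible set of \cref{prob:subclass_constrained_opt} with $\argmin_{x\in X}\phi(x)$ via the feasibility assumption and nonnegativity of $\phi$, and conclude using the first-order optimality characterization $\argmin_{x\in X}\phi(x)=\text{SOL}(X,F)$. Your additional remark that $F=\nabla\phi$ is automatically monotone is a nice observation not made explicit in the paper's proof.
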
 
\begin{proof}
See \cref{app:equiv_convexprogram}.
\end{proof}
\end{example}

\begin{example}[Optimization problems with complementarity constraints]\label{ex:opt_compl_const} Another class of problems that can be addressed in this work is as follows:
\begin{equation}\label{prob:subclass_constrained_compl}
\begin{aligned}
& {\text{minimize}} & & f(x) \\
& \text{subject to}  & &  x^TF(x) =0, \quad x\geq 0, \quad F(x) \geq 0,
\end{aligned}
\end{equation}
where $F:\mathbb{R}^n\to\mathbb{R}^n$ is a mapping. Then, problem \cref{prob:subclass_constrained_compl} can be cast as \cref{prob:main} where the set $X$ is the nonnegative orthant, i.e., $X\triangleq \mathbb{R}^n_{+}$ (see Proposition 1.1.3 in~\cite{FacchineiPang2003}).
\end{example}
\begin{example}[Optimization problems with nonlinear equality constraints]\label{ex:opt_nl_const}
Consider the following optimization problem:
\begin{equation}\label{prob:subclass_equ_constrained}
\begin{aligned}
& {\text{minimize}} & & f(x) \\
& \text{subject to}  & &  F(x) =0.
\end{aligned}
\end{equation}
 where $F:\mathbb{R}^n \to \mathbb{R}^n$ is a mapping. Defining $X \triangleq \mathbb{R}^n$, $\text{SOL}(X,F)$ is equal to the feasible solution set of the problem \cref{prob:subclass_equ_constrained}. This implies that the problem \cref{prob:subclass_equ_constrained} can be captured by the formulation \cref{prob:main}.
 \end{example}

\subsection{Existing methods and the research gap} We first begin by providing a brief overview of the solution methods for addressing a VI problem. Starting from the seminal work of Lemke and Howson \cite{LemkeHowson1964} and Scarf \cite{Scarf1967}, who developed the first solution methods for computing equilibria, in the past few decades, there has been a surge of research on the development and analysis of the computational methods for solving VIs. Perhaps this interest lies in the strong interplay between the VIs and the formulation of optimization and equilibrium problems arising in many communication and networking problems 
\cite{ScutariPalomarFacchineiPang2010}. Korpelevich's celebrated extragradient method~\cite{korp76} and its extensions~\cite{Nemirovski2004,Nem11,CensorGibaliReich2011,IusemNasri2011,CensorGibaliReich2012,YousefianNedichShanbhag2014,IusemJofreOliveiraThompson2016,ChenLanOuyang2017,FarzadSetValued18}
were developed which require weaker assumptions than their gradient counterparts. In the past decade, there has been a trending interest in addressing VIs in the stochastic regimes. Among these, Jiang and Xu \cite{JiangXu2008} developed the stochastic approximation methods for solving VIs with strongly monotone and smooth mappings. This work was later extended to the case with merely monotone mappings \cite{KoshalNedichShanbhag2013,KannanShanbhag2012,IusemJofreThompson2019} and nonsmooth mappings \cite{FarzadMathProg17}. 
\begin{algorithm} [h]
	\caption{The existing SR scheme for solving problem \cref{prob:main} when $f:=\frac{1}{2}\|\cdot\|^2$}
	\begin{algorithmic}[1]
		\STATE\textbf{Input:} {Set $X$, mapping $F$,  and an initial regularization parameter $\eta_0> 0$.}
		\FOR{t = 0, 1, \dots}
		\STATE Compute $x_{\eta_t}^*$ defined as $x_{\eta_t}^* \in \text{SOL}\left(X,F+\eta_t\textbf{I}_n\right)$.
		\STATE Update $\eta_{t}$ to $\eta_{t+1}$ such that $\eta_{t+1}<\eta_{t}$.
		\ENDFOR
	\end{algorithmic}\label{alg:two-loop}
\end{algorithm}

Despite much advances in the theory and algorithms for VIs, solving the problem \cref{prob:main} has remained challenging. To the best of our knowledge, the computational complexity of the existing solution methods for addressing \cref{prob:main} is unknown. In addressing the standard constrained optimization problems, Lagrangian duality and relaxation rules have often proven to be very successful \cite{BertsekasNLPBook2016}. However, when it comes to solving \cref{prob:main}, the duality theory cannot be practically employed. This is primarily because unlike in the standard constrained optimization problems where the objective function provides a metric for distinguishing solutions, there is no immediate analog in the VI problems. Inspired by the contributions of Andrey Tikhonov in 1980s on addressing illposed optimization problems, the existing methods for solving \cref{prob:main} share in common a sequential regularization (SR) scheme presented by \cref{alg:two-loop}. The SR scheme is a two-loop framework where at each iteration, given a fixed parameter $\eta_t$, a regularized VI denoted by $\text{VI}\left(X,F+\eta_t\mathbf{I}_n\right)$ is required to be solved. In the special case where $f(x) := \frac{1}{2}\|x\|^2$, it can be shown when $\eta_t \to 0$, under the monotonicity of the mapping $F$ and closedness and convexity of the set $X$, any limit point of the \textit{Tikhonov trajectory} denoted by $\{x_{\eta_t}^*\}$, where $ x_{\eta_t}^*\in \text{SOL}\left(X,F+\eta_t\textbf{I}_n\right)$, converges to the least $\ell_2$--norm vector in $\text{SOL}(X,F)$ (cf. Chapter 12 in  \cite{FacchineiPang2003}). The SR approach is associated with two main drawbacks: (i) It is a computationally inefficient scheme, as it requires solving a series of increasingly more difficult VI problems. (ii) The iteration complexity of the SR scheme in addressing the problem \cref{prob:main} is unknown. Accordingly, the main goal in this work lies in the development of an efficient scheme equipped with computational complexity analysis for solving the problem \cref{prob:main}.

\subsection{Summary of contributions} Our main contributions are as follows: 

\noindent \textbf{(i)} \textit{Development of a single timescale method equipped with convergence rate guarantees}: In addressing \cref{prob:main}, we develop an efficient first-order method called averaging randomized block iteratively regularized gradient (aRB-IRG). The proposed method is single timescale in the sense that, unlike the SR approach, it does not require solving a VI at each iteration. Instead, it only uses evaluations of the mapping $F$ and the subgradient of the objective function $f$ at each iteration. In the first part of the paper, we consider the case where the set $X$ is bounded. We let $f$ be a subdifferentiable merely convex function and $F$ be a monotone mapping. In \cref{thm:a-IRG_rate_results}, we derive a suboptimality convergence rate in terms of {the expected value of the objective function}. We also derive a convergence rate for the infeasibility that is characterized by {the expected value} of a dual gap function. {We also derive deterministic variants of the aforementioned convergence rates when we suppress the randomized block-coordinate scheme.} In the second part of the paper, we consider the case where the set $X$ is unbounded and $f$ is smooth and strongly convex. Utilizing the properties of the Tikhonov trajectory, we establish the global convergence of the scheme in an almost sure and a mean sense. To the best of our knowledge, this work appears to be the first paper that provides the two rate statements for problems of the form \cref{prob:main}. In particular, the complexity analysis in this work {contributes to the existing convergence theory} in several previous papers including~\cite{XuViscosity2004,KoshalNedichShanbhag2013,KannanShanbhag2012,FarzadMathProg17,FarzadHarshalACC19,FarzadSIOPT20}. Moreover, in the special case where the VI constraints represent the optimal solution set of an optimization problem, \cref{prob:main} captures a class of bilevel optimization problems. This class of problems has been studied in a number of recent papers in deterministic~\cite{Solodov2007,BeckSabach2014,SabachShtern2017,GarrigosRosascoVilla2018}, stochastic \cite{FarzadOMS19}, and distributed regimes \cite{FarzadPushPull2020}. However, the complexity analysis in the aforementioned papers lacks a suboptimality rate, or lacks an infeasibility rate, or requires much stronger assumptions such as strong convexity and smoothness of $f$. %{We, however, note that unlike some of the aforementioned work where the rate analysis is provided in a deterministic sense, the rate statements derived in this work are in a mean sense.}

\noindent \textbf{(ii)} \textit{Advancing the convergence rate properties of the randomized block-coordinate schemes}: Block-coordinate schemes, and specifically their randomized variants, have been widely studied in addressing the standard optimization problems (e.g., see \cite{Nesterov2012,RicktarikTakac2014,ShwartzZhang2013,Dang15,FarzadSetValued18}). However, in addressing VI problems, there are only a handful of recent papers, including \cite{LeiShanbhag2020,FarzadSetValued18}, that employ this technique and are equipped with rate guarantees. The aforementioned papers address standard VI problems that can be viewed a special case of the model \cref{prob:main} where $f(x):=0$.  In this work, we extend the convergence and rate analysis of the randomized block-coordinate schemes to the much broader regime of optimization problems with CVI constraints. 

\textbf{Outline of the paper:} The paper is organized as follows. The proposed algorithm is presented in \cref{sec:alg}. The complexity analysis is provided in \cref{sec:conv_rate_ana}. In \cref{sec:conv_unbounded}, we provide the convergence analysis when the set $X$ is unbounded. The experimental results are in \cref{sec:experiments}, and the conclusions follow in
\cref{sec:conclusions}. 

\textbf{Notation and preliminary definitions:} Throughout, a vector $ x \in \mathbb{R}^n$ is assumed to be a column vector and $ x^T$ denotes the transpose of $x$. We use $x^{(i)}\in \mathbb{R}^{n_i}$ to denote the $i^{\text{th}}$ block-coordinate of vector $x$ where $x =\left(x^{(1)};\ldots;x^{(d)}\right)$ and $\sum_{i=1}^dn_i=n$. The Euclidean norm of a vector $x$ is denoted by $\|x\|$, i.e.,  $\| x\|\triangleq \sqrt{ x^T  x}.$  For a mapping $F:\mathbb{R}^n \to \mathbb{R}^n$, we denote the $i^{\text{th}}$ block-coordinate of $F$ by $F_i:\mathbb{R}^n \to \mathbb{R}^{n_i}$, i.e., $F(x) =\left(F_1(x);\ldots;F_d(x)\right)$. A mapping $F:\mathbb{R}^n \to \mathbb{R}^n$ is said to be monotone on a convex set $X \subseteq \mathbb{R}^n$ if for any $x,y \in X$, we have $(F(x)-F(y))^T(x-y)\geq 0$. The mapping $F$ is said to be $\mu$--strongly monotone on a convex set $X \subseteq \mathbb{R}^n$ if $\mu>0$ and for any $x,y \in X$, we have $(F(x)-F(y))^T(x-y)\geq \mu\|x-y\|^2$. Also, $F$ is said to be Lipschitz with parameter $L>0$ on the set $X$ if for any $x,y \in X$, we have $\|F(x)-F(y)\|\leq L\|x-y\|$. A continuously differentiable function $f:\mathbb{R}^n \to \mathbb{R}$ is called $\mu$--strongly convex on a convex set $X$ if $f(x) \geq f(y)+\nabla f(y)^T(x-y)+\frac{\mu}{2}\|x-y\|^2$. Function $f$ is $\mu$--strongly convex if and only if $\nabla f$ is $\mu$--strongly monotone on $X$. For a convex function $f$ with the domain $\text{dom}(f)$, the subgradient of $f$ at $x \in \text{dom}(f)$ is denoted by $\tilde \nabla f(x)$ and it satisfies $f(x)+\tilde \nabla f(x)^T(y-x) \leq f(y)$ for all $y \in \text{dom}(f)$. The subdifferential set of $f$ at $x$ is the set of all subgradients of $f$ at $x$ and is denoted by $\partial f(x)$. The Euclidean projection of vector $s$ onto a convex set $ S$ is denoted by $ \proj[S]{ s} $, where $\proj[S]{ s}\triangleq  \argmin_{ y \in  S}\| s- y\|$. %Given two vectors $x,y\in \mathbb{R}^n$, we use $0\leq x \perp y \geq 0$ to compactly denote the complementarity constraints $x\geq 0$, $y \geq 0$, and $x^Ty=0$. 
%The solution set to a $\text{VI}(X,F)$ is denoted by $\text{SOL}(X,F)$. We use ``a.s." to abbreviate ``almost surely'' and 
We use $\mathbf{I}_n$ to denote the identity matrix of size $n\times n$. The probability of an event $Z$ is denoted by $\text{Prob}(Z)$ and the expectation of a random variable $z$ is denoted by $\EXP{z}$. We use $\mathbb{R}^n_+$ and $\mathbb{R}^n_{++}$ to denote $\{x \in \mathbb{R}^n \mid x \geq 0\}$ and  $\{x \in \mathbb{R}^n \mid x > 0\}$, respectively.

\section{Outline of the algorithm}\label{sec:alg}
In this section, we state the main assumptions and present the proposed scheme for solving the optimization problem \cref{prob:main}. %We begin by stating the following assumption on the problem of interest:
\begin{assumption}\label{assum:problem} Consider the problem \cref{prob:main} under the following conditions:

\noindent (a) The set $X_i$ is nonempty, closed, and convex for all $i=1,\dots, d$.

\noindent (b) The function $f$ is convex and has bounded subgradients over the set $X$.

\noindent  (c) The mapping $F:\mathbb{R}^n \to \mathbb{R}^n$ is continuous, monotone, and bounded over the set $X$. 

\noindent (d) The optimal solution set of problem \cref{prob:main} in nonempty.
\end{assumption}
{\cref{assum:problem}(b) implies that $f$ is Lipschitz continuous over the set $X$. Under this assumption}, we address a broad class of problems of the form \cref{prob:main} where the objective function is possibly nondifferentiable and nonstrongly convex. In the following, we discuss the conditions under which \cref{assum:problem}(d) is satisfied. 
\begin{remark}[Existence of an optimal solution]
Suppose \cref{assum:problem}(a), (b), and (c) hold. The existence of an optimal solution to the problem \cref{prob:main} can be established under different conditions. We provide two instances as follows: (i) Suppose there {exists} a vector $\bar x \in X$ such that the set $\bar X\triangleq \{x \in X: \ F(x)^T(x-\bar x)\leq 0\}$ is bounded. Then, from Proposition 2.2.3 in~\cite{FacchineiPang2003}, $\text{SOL}(X,F)$ is nonempty and compact. Consequently, the Weierstrass' Theorem implies the existence of an optimal solution to the problem \cref{prob:main}. (ii) Suppose the set $X$ is compact. Then, from Corollary 2.2.5 in~\cite{FacchineiPang2003}, the set $\text{SOL}(X,F)$ is nonempty and compact. Again, \cref{assum:problem}(d) is guaranteed by the Weierstrass' Theorem.
\end{remark}
Throughout, we let $C_F>0$ denote the bound on the Euclidean norm of the mapping $F$, i.e., $\|F(x)\| \leq C_F$ for all $x \in X$. Also, we let $C_f>0$ denote the bound on the norm of the subgradients of $f$, i.e., $\|\tilde \nabla f (x)\| \leq C_f$ for all $\tilde \nabla f(x) \in \partial f(x)$ and $x \in X$.
\begin{algorithm} [h]
	\caption{aRB-IRG}
	\begin{algorithmic}[1]
		\STATE\textbf{Input:} A random initial point $x_0 \in X$, $\bar{x}_0 := x_0$, initial stepsize $\gamma_{0} > 0$, initial regularization parameter $\eta_{0} > 0$, a scalar $0\leq r<1$, and $S_0 := \gamma_0^r$.
		\FOR{k = 0, 1, \dots}
		\STATE Generate a realization of random variable $i_k$ according to \cref{assum:random sample}.
		\STATE Evaluate $F_{i_k}(x_k)$ and $\tilde \nabla_{i_k} f(x_k)$ where $\tilde \nabla f(x_k) \in \partial f(x_k)$.

		\STATE Update $x_k$ as follows:
%				${x_{k+1}^{(i)}} :=$
		\begin{align}\label{equ:update_rule_aRBIRG} 
		{x_{k+1}^{(i)}} :=
		\begin{cases}
		\proj[X_{i_k}]{ x_{k}^{(i_k)}-\gamma_{k}\left(F_{i_k}\left( x_{k}\right) + \eta_{k} \tilde \nabla_{{i_k}} f\left( x_{k}\right)\right)}& \text{ if } i = i_k, \\
		x_k^{(i)} & \text{ if } i \neq i_k. 
		\end{cases}
		\end{align}
		\STATE Obtain $\gamma_{k+1}$ and $\eta_{k+1}$ (cf. \cref{thm:a-IRG_rate_results} and \cref{thm:convergence_RB-IRG} for the update rules).

		\STATE Update the averaged iterate $\bar{x}_{k}$ as follows:
			\begin{align}\label{eqn:ave_step_of_alg}
			S_{k+1} &:= S_k + \gamma_{k+1}^r,\quad	\bar{x}_{k+1}:= \frac{S_k\bar{x}_{k}+\gamma_{k+1}^rx_{k+1}}{S_{k+1}}.
			\end{align}
		\ENDFOR
	\end{algorithmic} \label{alg:aRB-IRG}
\end{algorithm}
The outline of the proposed method is presented by \cref{alg:aRB-IRG}. At iteration $k$, a block-coordinate index $i_k$ is selected randomly as follows: 
\begin{assumption}[Block-coordinate selection rule] \label{assum:random sample}
At each iteration $k\geq 0$, the random variable $i_k$ is generated from an independent and identically distributed discrete probability distribution such that $\mathrm{Prob}\left(i_k=i\right) = \prob{i}$ where $ \prob{i}>0$ for $i \in \{1,\ldots,d\}$ and $\sum_{i=1}^{d}\prob{i}=1$.
\end{assumption}
Then, the $i_k^\text{th}$ block-coordinate of the iterate $x_k$ is updated using \cref{equ:update_rule_aRBIRG}. Here, $\gamma_k$ denotes the stepsize at iteration $k$ and $\eta_k$ denotes the regularization parameter at iteration $k$. We note that these sequences are updated iteratively. Here, we incorporate {the} information of the mapping $F$ and the subgradient mapping $\tilde \nabla f$ by employing an iterative regularization scheme. {At each iteration, a projection operation is performed onto a randomly selected set $X_{i_k}$. For this reason, the proposed algorithm finds some relevance with the prior study on randomized projection methods such as~\cite{Nedich2011}.} We will show that the convergence and rate analysis of the proposed method mainly rely on the choices of $\{\gamma_k\}$ and $\{\eta_k\}$. Accordingly, one key research objective in this section will center around the development of suitable update rules for the two sequences so that we can establish the convergence and derive rate statements. To obtain the rate results, we employ an averaging step using the equations given by \cref{eqn:ave_step_of_alg}, where the sequence $\{\bar{x}_k\}$ is obtained as a weighted average of  $\{x_k\}$. The averaging weights are characterized by the stepsize $\gamma_k$ and a scalar $r \in \mathbb{R}$. It will be shown that the rate results can be provided when $0\leq r<1$ (cf. \cref{thm:a-IRG_rate_results}). 

{\begin{remark}
Importantly, unlike \cref{alg:two-loop}, \cref{alg:aRB-IRG} is a single timescale scheme that does not require solving any inner-level VI problem. In particular, the update rule given by \cref{equ:update_rule_aRBIRG} mainly requires evaluations of random blocks of the mappings $F$ and $\tilde \nabla f$. For this reason, \cref{equ:update_rule_aRBIRG} is computationally more efficient than the step $3$ in \cref{alg:two-loop}.
\end{remark}} 

\subsection{Preliminaries} 
In the following, we provide some definitions and preliminary results that will be used to analyze the convergence of \cref{alg:aRB-IRG}. %Of these, we use the following function as a metric to characterize the distance between two vectors.
\begin{definition}[Distance function]\label{def:err_D}
	For any $x,y \in \mathbb{R}^n$, function $\mathcal{D}(x,y)$ is defined as $
	\mathcal{D}(x,y)\triangleq \sum_{i=1}^{d} \prob{i}^{-1} \left\Vert\left. x^{(i)}-y^{(i)} \right\Vert\right.^2$, where $\prob{i}$ is given by \cref{assum:random sample}.
	% 	, where $\prob{i}$ is the uniform probability of selection of any block  $i$. From \cref{assum:random sample}, we have $\prob{i} = \frac{1}{d}.$ So, we write  $ \mathcal{D}(x,y) =  d \sum_{i=1}^{d}  \left\Vert\left. x^{(i)}-y^{(i)} \right\Vert\right.^2$.
\end{definition}
\begin{remark} \label{rem:err_D}
Under \cref{assum:random sample}, we can relate the distance function $\mathcal{D}$ with the $\ell_2$--norm as follows: $\prob{{min}}	{\mathcal{D}(x,y)} \leq \|x-y\|^2 \leq\prob{{max}} {\mathcal{D}(x,y)}$ for all $x,y \in \mathbb{R}^n$, where  $\prob{{min}}\triangleq \min_{1\leq i \leq d}\{\prob{i}\}$ and $\prob{{max}}\triangleq \max_{1\leq i \leq d}\{\prob{i}\}$.
\end{remark}
One of the main challenges {in} the convergence analysis of computational methods for solving VI problems lies in the lack of access to a standard metric to quantify the quality of the solution iterates. {T}his is in contrast with solving the standard optimization problems where the objective function can serve as an immediate performance metric for the underlying algorithm. Addressing this challenge in the literature of VI problems has led to the study of so-called \textit{gap functions} (cf.~\cite{FacchineiPang2003,FarzadMathProg17}). Of these, in the analysis of this section, we use the dual gap function defined as follows:
\begin{definition}[The dual gap function \cite{MarcotteZhu1998}]\label{def:gap}
Let a nonempty closed set $X\subseteq \mathbb{R}^n$ and a mapping $F:X\rightarrow\mathbb{R}^n$ be given. Then, for any $x \in X$, the dual gap function $\mathrm{GAP}:X\rightarrow \mathbb{R}\cup \{+\infty\} $ is defined as $ \mathrm{GAP}(x) \triangleq \sup_{y\in X} F(y)^T(x-y)$.
\end{definition}
\begin{remark} 
{When $X \neq \emptyset$, \cref{def:gap} implies} that the dual gap function is nonnegative over $X$. It is also known that when $F$ is continuous and monotone and the set $X$ is closed and convex, $\text{GAP}(x^*)=0$ if and only if $x^* \in \text{SOL}(X,F)$ (cf.~\cite{Nem11}). Thus, we conclude that under \cref{assum:problem}, the dual gap function is well-defined.
\end{remark}
\begin{definition}[Regularized mapping]\label{def:reg_map}
Given a vector $x \in X$, a subgradient $\tilde \nabla f(x) \in \partial f(x)$, and an integer $k\geq 0$, the regularized mapping $G_k: X \to \mathbb{R}$ is defined as $G_k(x) \triangleq F(x)+\eta_k\tilde \nabla f(x)$. The $i^\text{th}$ {block-coordinate} of $G_k$ is denoted by $G_{k,i}$.
\end{definition}
\begin{definition}[History of the method]\label{def:Fk}
Throughout, we let the history of the algorithm to be denoted by $\mathcal{F}_k \triangleq \{x_0,i_0,i_1,\ldots,i_{k-1}\}$ for $k\geq 1$, with $\mathcal{F}_0 \triangleq \{x_0\}$.
\end{definition}
Next, we show that $\bar x_k$ generated by \cref{alg:aRB-IRG} is a well-defined weighted average.\begin{lemma}[Weighted averaging]\label{Lemma 2.10}
Let $\{\bar x_k\}$ be generated by \cref{alg:aRB-IRG}. Let us define the weights $\lambda_{k,N} \triangleq \frac{\gamma_k^r}{\sum_{j=0}^N \gamma_j^r}$ for $k \in \{0,\ldots, N\}$ and $N\geq 0$. Then, for any $N\geq 0$, we have $\bar{x}_{N} = \sum_{k=0}^N \lambda_{k,N} x_k$. Also, when $X$ is a convex set, we have $\bar x_N \in X$.
\end{lemma}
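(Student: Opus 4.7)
The plan is to prove both claims by a straightforward induction on $N$, using the recursive definitions of $S_k$ and $\bar x_k$ in \cref{eqn:ave_step_of_alg}, and then obtain the containment by viewing $\bar x_N$ as a convex combination of iterates known to lie in $X$.

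First I would observe that by telescoping $S_{k+1}=S_k+\gamma_{k+1}^r$ with $S_0=\gamma_0^r$, one immediately gets $S_N=\sum_{j=0}^N \gamma_j^r$, so that $\lambda_{k,N}=\gamma_k^r/S_N$. This identification is the key bookkeeping ingredient.

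For the induction, the base case $N=0$ is trivial since $\bar x_0 = x_0$ and $\lambda_{0,0}=1$. For the inductive step, assuming $\bar x_N=\sum_{k=0}^N \lambda_{k,N}x_k = S_N^{-1}\sum_{k=0}^N \gamma_k^r x_k$, I would plug into the recursion
\begin{equation*}
\bar x_{N+1}=\frac{S_N\bar x_N+\gamma_{N+1}^r x_{N+1}}{S_{N+1}}
=\frac{\sum_{k=0}^N \gamma_k^r x_k+\gamma_{N+1}^r x_{N+1}}{S_{N+1}}
=\sum_{k=0}^{N+1}\lambda_{k,N+1}x_k,
\end{equation*}
which completes the induction. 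This step is essentially mechanical; the only care needed is to notice that the normalization in $\bar x_N$ and the factor $S_N$ in the numerator cancel cleanly.

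For the second claim, I would argue that each iterate $x_k$ lies in $X=\prod_{i=1}^d X_i$. Indeed, $x_0\in X$ by assumption, and at each iteration \cref{equ:update_rule_aRBIRG} updates block $i_k$ via a projection onto the convex set $X_{i_k}$ while leaving the other blocks unchanged, so by induction $x_k\in X$ for every $k$. Since $\gamma_k>0$ and $r\ge 0$, each $\lambda_{k,N}>0$ and $\sum_{k=0}^N \lambda_{k,N}=1$, so $\bar x_N$ is a convex combination of points in $X$. Convexity of $X$ (which follows from \cref{assum:problem}(a)) then yields $\bar x_N\in X$. I do not anticipate a real obstacle here; the statement is essentially a consistency check for the averaging scheme that will be invoked repeatedly in the subsequent rate analysis.
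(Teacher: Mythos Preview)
Your proof is correct and follows essentially the same approach as the paper: induction on $N$ using the recursion \cref{eqn:ave_step_of_alg} together with the identity $S_N=\sum_{j=0}^N\gamma_j^r$, followed by the convex-combination argument for $\bar x_N\in X$. You additionally spell out why each $x_k\in X$ (via the block projection in \cref{equ:update_rule_aRBIRG}), which the paper leaves implicit, but this is a minor elaboration rather than a different route.
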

\begin{proof}
\begin{comment}
We use induction to show that $\bar{x}_{N} = \sum_{k=0}^N \lambda_{k,N} x_k$ for any $N\geq 0$. For $N=0$, the relation holds due to the initialization $\bar{x}_0 :=x_0$ in \cref{alg:aRB-IRG} and that $\lambda_{0,0}=1$. Next, let the relation hold for some $N\geq 0$. From \cref{eqn:ave_step_of_alg} and noting that $S_{N}=\sum_{k=0}^N\gamma_k^r$ for any $N\geq 0$, we can write the following: 
\begin{align*}
	\bar{x}_{N+1} = \frac{S_N\bar{x}_N + \gamma_{N+1}^r x_{N+1}}{S_{N+1}} 
	%= \frac{\sum_{k = 0}^{N}\gamma_k^r x_k + \gamma_{N+1}^r x_{N+1}}{S_{N+1}}
	 = \frac{\sum_{k=0}^{N+1}\gamma_k^r x_k}{\sum_{k = 0}^{N+1}\gamma_k^r} = \sum_{k=0}^{N+1} \lambda_{k,N+1} x_k,
	\end{align*}
implying that the induction hypothesis holds for $N+1$. Thus, we conclude that the desired averaging formula holds for $N\geq 0$. To complete the proof, note that since $\sum_{k=0}^N \lambda_{k,N}=1$, under the convexity of the set $X$, we have $\bar x_N \in X$.
\end{comment}
See \cref{app:ave_x_second_formula}. 
\end{proof}
In the following, we define two terms that characterize the error between the true maps with their randomized block variants. 
\begin{definition}[Randomized block error terms]\label{def:deltas}
Let ${\mathbf{U}}_i \in \mathbb{R}^{n\times n_i}$ for $i=1,\ldots,d$ be the collection of matrices such that $\mathbf{I}_n = \left[{\mathbf{U}}_1,\ldots,{\mathbf{U}}_d\right] \in \mathbb{R}^{n\times n}$. Consider the following definitions for $k\geq 0$: 
\begin{align}\label{eqn:deltas}
\Delta_k\triangleq F(x_k) - \prob{{i_k}}^{-1}{\mathbf{U}}_{i_k}F_{i_k}(x_k), \qquad \delta_k\triangleq \tilde \nabla f(x_k) - \prob{{i_k}}^{-1}{\mathbf{U}}_{i_k} \tilde \nabla_{i_k}f (x_k).
\end{align}
\end{definition}
\begin{lemma}[Properties of $\Delta_k$ and $\delta_k$]\label{Lemma 2.12}
Consider \cref{def:deltas}. We have:\\
\noindent (a)  $\EXP{\Delta_k\mid \mathcal{F}_k}=\EXP{\delta_k\mid \mathcal{F}_k}=0$.\\
\noindent (b) $\EXP{\|\Delta_k\|^2\mid \mathcal{F}_k} \leq  \left(\prob{{min}}^{-1}-1\right)C_F^2$ and $\EXP{\|\delta_k\|^2\mid \mathcal{F}_k} \leq  \left(\prob{{min}}^{-1}-1\right)C_f^2$.
\end{lemma}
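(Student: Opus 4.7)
The plan is to exploit two structural facts about the block matrices $\mathbf{E}_i$. First, since $\mathbf{I}_n = [\mathbf{E}_1,\dots,\mathbf{E}_d]$, for any vector $u\in\mathbb{R}^n$ one has $u = \sum_{i=1}^{d}\mathbf{E}_i u^{(i)}$, and the columns of distinct $\mathbf{E}_i$'s are orthogonal, so $\mathbf{E}_i^T\mathbf{E}_j = 0$ for $i\neq j$ and $\mathbf{E}_i^T\mathbf{E}_i = \mathbf{I}_{n_i}$. Applied to $F$, this gives $F(x_k)=\sum_{i=1}^d \mathbf{E}_i F_i(x_k)$, $\mathbf{E}_{i_k}^T F(x_k)=F_{i_k}(x_k)$, and $\|\mathbf{E}_{i_k}F_{i_k}(x_k)\|=\|F_{i_k}(x_k)\|$, and analogously for $\tilde\nabla f$. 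Second, by \cref{def:Fk}, $x_k$ is $\mathcal{F}_k$-measurable, while \cref{assum:random sample} ensures that $i_k$ is drawn independently of $\mathcal{F}_k$ with $\mathrm{Prob}(i_k=i)=\prob{i}$. I will use these two facts throughout.

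For part (a), conditioning on $\mathcal{F}_k$ the quantity $F(x_k)$ is deterministic, and the randomness in $\Delta_k$ comes entirely from $i_k$. Therefore
\[
\EXP{\prob{i_k}^{-1}\mathbf{E}_{i_k}F_{i_k}(x_k)\mid \mathcal{F}_k} = \sum_{i=1}^{d}\prob{i}\cdot\prob{i}^{-1}\mathbf{E}_i F_i(x_k) = \sum_{i=1}^{d}\mathbf{E}_i F_i(x_k) = F(x_k),
\]
which yields $\EXP{\Delta_k\mid \mathcal{F}_k}=0$. The argument for $\delta_k$ is identical after replacing $F$ by the chosen subgradient $\tilde\nabla f(x_k)\in\partial f(x_k)$, which is itself $\mathcal{F}_k$-measurable.

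For part (b), I would expand the square
\[
\|\Delta_k\|^2 = \|F(x_k)\|^2 - 2\prob{i_k}^{-1} F(x_k)^T \mathbf{E}_{i_k}F_{i_k}(x_k) + \prob{i_k}^{-2}\|\mathbf{E}_{i_k}F_{i_k}(x_k)\|^2,
\]
then use $F(x_k)^T\mathbf{E}_{i_k}F_{i_k}(x_k) = \|F_{i_k}(x_k)\|^2$ and $\|\mathbf{E}_{i_k}F_{i_k}(x_k)\|^2 = \|F_{i_k}(x_k)\|^2$ to collapse the middle and last terms. Taking the conditional expectation over $i_k$ and summing $\prob{i}\cdot\prob{i}^{-1}(\prob{i}^{-1}-2)$, the cross terms telescope with $\|F(x_k)\|^2 = \sum_i\|F_i(x_k)\|^2$, leaving
\[
\EXP{\|\Delta_k\|^2\mid \mathcal{F}_k} = \sum_{i=1}^{d}\prob{i}^{-1}\|F_i(x_k)\|^2 - \|F(x_k)\|^2 \leq \bigl(\prob{\min}^{-1}-1\bigr)\|F(x_k)\|^2,
\]
and the bound $\|F(x_k)\|\leq C_F$ from \cref{assum:problem}(c) completes the estimate. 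The analogous computation for $\delta_k$ uses the subgradient bound $C_f$ from \cref{assum:problem}(b).

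I do not foresee a real obstacle here: the only subtlety is being careful with the block orthogonality identities so that the cross term in $\|\Delta_k\|^2$ simplifies cleanly, and keeping the conditioning explicit so that $x_k$ (and the selected subgradient $\tilde\nabla f(x_k)$) can be treated as deterministic while $i_k$ supplies the randomness.
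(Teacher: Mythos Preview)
Your proposal is correct and follows essentially the same approach as the paper: both expand the square, use the block-orthogonality identities $F(x_k)^T\mathbf{E}_iF_i(x_k)=\|F_i(x_k)\|^2$ and $\|\mathbf{E}_iF_i(x_k)\|=\|F_i(x_k)\|$, take conditional expectation over $i_k$, arrive at $\sum_i \prob{i}^{-1}\|F_i(x_k)\|^2 - \|F(x_k)\|^2$, and bound this by $(\prob{\min}^{-1}-1)\|F(x_k)\|^2$. The only cosmetic difference is that the paper writes the expectation as a sum first and then expands, whereas you expand first and then average; the computations are identical.
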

\begin{proof} 
\begin{comment}
\noindent (a) From the definition of $\Delta_k$ and ${\mathbf{U}}_i $, we can write:
\begin{align*} 
\EXP{\Delta_k\mid \mathcal{F}_k} =F(x_k)-\sum_{i=1}^d\prob{{i}}\prob{{i}}^{-1}{\mathbf{U}}_{i}F_{i}(x_k)= F(x_k) -\sum_{i=1}^d{\mathbf{U}}_{i}F_{i}(x_k) =0.
\end{align*}
The relation $\EXP{\delta_k\mid \mathcal{F}_k}=0$ can be shown in a similar fashion.

\noindent (b) We can write:
\begin{align*}
 \EXP{\|\Delta_k\|^2\mid \mathcal{F}_k}& = \sum_{i=1}^d\prob{{i}}\left\|F(x_k)-\prob{{i}}^{-1}{\mathbf{U}}_{i}F_{i}(x_k)\right\|^2\\
 &=  \sum_{i=1}^d\prob{{i}} \left(\|F(x_k)\|^2 +\prob{{i}}^{-2} \left\|{\mathbf{U}}_{i}F_{i}(x_k)\right\|^2 - 2\prob{{i}}^{-1}F(x_k)^T{\mathbf{U}}_{i}F_{i}(x_k)\right)\\
 & = \|F(x_k)\|^2 +\sum_{i=1}^d\prob{{i}}^{-1} \left\|{\mathbf{U}}_{i}F_{i}(x_k)\right\|^2  -2\sum_{i=1}^d\|F_i(x_k)\|^2\\
 & \leq \left(\prob{{min}}^{-1}-1\right)\|F(x_k)\|^2 \leq \left(\prob{{min}}^{-1}-1\right)C_F^2.
\end{align*}
The relation $\EXP{\|\delta_k\|^2\mid \mathcal{F}_k} \leq  \left(\prob{{min}}^{-1}-1\right)C_f^2$ can be shown using a similar approach. 
\end{comment}
See \cref{app:deltas}
\end{proof}
We will use the next result in deriving the suboptimality and infeasibility rate results.\begin{lemma}[Bounds on the harmonic series]\label{Lemma 2.13}
Let $0\leq \alpha <1$ be a given scalar. Then, for any integer $N \geq 2^{\frac{1}{1-\alpha}}-1$, we have 
%\begin{align*}
$\frac{(N+1)^{1-\alpha}}{2(1-\alpha)}  \leq \sum_{k=0}^{N}\frac{1}{(k+1)^\alpha}\leq \frac{(N+1)^{1-\alpha}}{1-\alpha}$.
%\end{align*}
\end{lemma}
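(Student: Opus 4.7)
The plan is to prove both inequalities by standard integral comparison, exploiting monotonicity of the function $g(x)=x^{-\alpha}$ on $(0,\infty)$, and then to use the hypothesis $N \geq 2^{1/(1-\alpha)}-1$ only to clean up the lower bound.

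First I would rewrite $\sum_{k=0}^N (k+1)^{-\alpha} = \sum_{j=1}^{N+1} j^{-\alpha}$. For the upper bound, observe that $g(x)=x^{-\alpha}$ is nonincreasing for $\alpha \in [0,1)$, so for every $k \in \{0,\ldots,N\}$ and $x \in [k,k+1]$ we have $(k+1)^{-\alpha} \leq x^{-\alpha}$. Integrating and summing telescopically,
\begin{equation*}
\sum_{k=0}^N \frac{1}{(k+1)^\alpha} \;\leq\; \int_0^{N+1} x^{-\alpha}\,dx \;=\; \frac{(N+1)^{1-\alpha}}{1-\alpha},
\end{equation*}
which is the upper bound. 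The integrability at $0$ is ensured by $\alpha<1$. The $\alpha=0$ case is an equality and needs no separate treatment.

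For the lower bound, I would use the opposite integral inequality: $(k+1)^{-\alpha} \geq \int_{k+1}^{k+2} x^{-\alpha}\,dx$ by monotonicity, so
\begin{equation*}
\sum_{k=0}^N \frac{1}{(k+1)^\alpha} \;\geq\; \int_1^{N+2} x^{-\alpha}\,dx \;=\; \frac{(N+2)^{1-\alpha}-1}{1-\alpha}.
\end{equation*}
It remains to show $(N+2)^{1-\alpha}-1 \geq \tfrac{1}{2}(N+1)^{1-\alpha}$. Since $(N+2)^{1-\alpha}\geq (N+1)^{1-\alpha}$, it suffices to verify $(N+1)^{1-\alpha}-1 \geq \tfrac{1}{2}(N+1)^{1-\alpha}$, i.e., $(N+1)^{1-\alpha}\geq 2$. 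This is exactly the assumed condition $N+1 \geq 2^{1/(1-\alpha)}$. For $\alpha=0$ the hypothesis reads $N\geq 1$ and the claim $N+1 \geq (N+1)/2$ is trivial, so the argument is uniform in $\alpha \in [0,1)$.

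No serious obstacle is anticipated; the only subtlety is choosing the integration limits correctly so that the tail term $-1$ arising from the lower comparison can be absorbed by the factor $\tfrac{1}{2}$ in the target bound, which is precisely why the threshold $N\geq 2^{1/(1-\alpha)}-1$ appears in the statement.
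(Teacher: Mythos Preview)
Your proof is correct and follows essentially the same integral-comparison approach as the paper. The only cosmetic difference is that for the upper bound you integrate $x^{-\alpha}$ on $[0,N+1]$ directly (using $\alpha<1$ for integrability at $0$), whereas the paper splits off the $k=0$ term and integrates on $[1,N+1]$; the lower-bound argument via $\int_1^{N+2}$ and the use of the hypothesis $(N+1)^{1-\alpha}\geq 2$ are identical.
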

\begin{proof}
\begin{comment}Given $0\leq \alpha <1$, let us define the function $\phi:\mathbb{R}_{++} \to \mathbb{R}$ as $\phi(x) \triangleq x^{-\alpha}$ for all $x>0$. Since $\alpha >0$, the function $\phi$ is nonincreasing. We can write:
\begin{align*}
\sum_{k=0}^{N}\frac{1}{(k+1)^\alpha}
%=\sum_{k=1}^{N+1}k^{-\alpha}
= 1 + \sum_{k=2}^{N+1}\frac{1}{k^\alpha}\leq 1+\int_{1}^{N+1}\frac{dx}{x^\alpha} =1+\frac{(N+1)^{1-\alpha}-1}{1-\alpha}\leq \frac{(N+1)^{1-\alpha}}{1-\alpha},
\end{align*}
implying the desired upper bound. To show that the lower bound holds, we can write:
\begin{align*}
\sum_{k=0}^{N}\frac{1}{(k+1)^\alpha}=\sum_{k=1}^{N+1}\frac{1}{k^\alpha}\geq \int_{1}^{N+2}\frac{dx}{x^\alpha} \geq \int_{1}^{N+1}\frac{dx}{x^\alpha}
%=\frac{(N+1)^{1-\alpha}-1}{1-\alpha} 
\geq \frac{(N+1)^{1-\alpha}-0.5(N+1)^{1-\alpha}}{1-\alpha},
\end{align*}
where the last inequality is obtained using the assumption that $N \geq 2^{\frac{1}{1-\alpha}}-1$. Therefore, the desired lower bound holds as well. This completes the proof.
\end{comment}
See \cref{app:harmonic_bounds}
\end{proof}

\section{Convergence rate analysis}\label{sec:conv_rate_ana}
In the following result, we derive an inequality that will be later used to construct bounds on the objective function value and the dual gap function at the averaged sequence generated by \cref{alg:aRB-IRG}.
\begin{lemma}\label{lem:prebound_gap}
Consider the sequence $\{x_k\}$ in \cref{alg:aRB-IRG}. Suppose $\{\gamma_k\}$ and $\{\eta_k\}$ are strictly positive sequences. Let \cref{assum:problem} and \cref{assum:random sample} hold. Let the auxiliary sequence $\{u_k\} \subset X$ be defined as 
		%\begin{align}\label{equ:update_rule_aux_u} 
		{$u_{k+1} \triangleq \mathcal{P}_X\left(u_k-\gamma_k\left(\Delta_k+\eta_k\delta_k\right)\right)$},
		%\end{align}
where $u_0 \in X$ is an arbitrary vector. Then, for {all $y\in X$ and all $k \geq 0$, we have}:
\begin{align}\label{eqn:prebound_gap}
&\gamma_k^rF(y)^T(x_k-y) +\gamma_k^r\eta_k\tilde \nabla f(x_k)^T(x_k-y)\leq \frac{\gamma_k^{r-1}}{2}\left(\mathcal{D}\left(x_{k}, y\right) +\|u_k-y\|^2\right) \nonumber\\
&-\frac{\gamma_k^{r-1}}{2}\left(\mathcal{D}\left(x_{k+1}, y\right) +\|u_{k+1}-y\|^2\right){+ \gamma_k^r(x_k-u_k)^T\left(\Delta_k+\eta_k\delta_k\right)}\nonumber\\
& +\gamma_k^{r+1}\left(\|\Delta_k\|^2+\eta_k^2\|\delta_k\|^2\right)+ 0.5\prob{{i_k}}^{-1}\gamma_k^{1+r}\left\Vert\left.G_{k,i_k}\left(x_k\right)\right\Vert\right.^2.
\end{align}
\end{lemma}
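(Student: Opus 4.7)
My plan is to combine two applications of projection non-expansiveness, one for the primal iterate $x_k$ and one for the auxiliary sequence $u_k$, and then invoke the monotonicity of $F$ to bring $F(y)$ into the left-hand side.

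First, I would work from the block update~\cref{equ:update_rule_aRBIRG}. For the sampled block $i_k$, non-expansiveness of $\mathcal{P}_{X_{i_k}}$ yields $\|x_{k+1}^{(i_k)}-y^{(i_k)}\|^2 \leq \|x_k^{(i_k)}-\gamma_k G_{k,i_k}(x_k)-y^{(i_k)}\|^2$, while the remaining blocks satisfy $\|x_{k+1}^{(i)}-y^{(i)}\|^2 = \|x_k^{(i)}-y^{(i)}\|^2$. Expanding the square, multiplying block $i$ by $\prob{i}^{-1}$ per \cref{def:err_D}, and summing over $i$, I obtain $\mathcal{D}(x_{k+1},y) \leq \mathcal{D}(x_k,y) - 2\gamma_k\prob{{i_k}}^{-1}G_{k,i_k}(x_k)^T(x_k^{(i_k)}-y^{(i_k)}) + \gamma_k^2\prob{{i_k}}^{-1}\|G_{k,i_k}(x_k)\|^2$. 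The crucial reformulation is the identity $\prob{{i_k}}^{-1}\mathbf{E}_{i_k}G_{k,i_k}(x_k) = G_k(x_k) - \Delta_k - \eta_k\delta_k$ coming from \cref{def:deltas}, which lifts the block inner product to the ambient space as $(G_k(x_k)-\Delta_k-\eta_k\delta_k)^T(x_k-y)$.

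Second, I would apply the same non-expansiveness to the auxiliary update $u_{k+1} = \mathcal{P}_X(u_k+\gamma_k(\Delta_k+\eta_k\delta_k))$. Since $y \in X$, this produces a telescoping relation between $\|u_k-y\|^2 - \|u_{k+1}-y\|^2$ and $(\Delta_k+\eta_k\delta_k)^T(u_k-y)$, supplying the second pair of distance terms appearing in~\cref{eqn:prebound_gap}. To splice this with the primal estimate, I would use the decomposition $x_k-y = (x_k-u_k) + (u_k-y)$, which splits the noise cross-term $2\gamma_k(\Delta_k+\eta_k\delta_k)^T(x_k-y)$ into a zero-mean piece $2\gamma_k(\Delta_k+\eta_k\delta_k)^T(x_k-u_k)$, whose conditional expectation vanishes in the downstream analysis because both $x_k$ and $u_k$ are $\mathcal{F}_k$-measurable, and a telescoping piece $2\gamma_k(\Delta_k+\eta_k\delta_k)^T(u_k-y)$ absorbed by the $u$-bound. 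Invoking the monotonicity of $F$ through $F(x_k)^T(x_k-y) \geq F(y)^T(x_k-y)$ then converts the $G_k(x_k)^T(x_k-y)$ contribution into the targeted pair $F(y)^T(x_k-y) + \eta_k\tilde\nabla f(x_k)^T(x_k-y)$, while the Young-type inequality $\|\Delta_k+\eta_k\delta_k\|^2 \leq 2\|\Delta_k\|^2 + 2\eta_k^2\|\delta_k\|^2$ decouples the noise variance into its two scaled pieces. A final multiplication by $\gamma_k^{r-1}/2$ converts the uniform $2\gamma_k$ scaling into the $\gamma_k^r$ weighting the statement requires.

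The principal obstacle I anticipate is bookkeeping: matching every coefficient and every sign, including each factor of $\prob{{i_k}}^{-1}$, $\gamma_k$, and $\eta_k$, to the target~\cref{eqn:prebound_gap}. Individually, each ingredient is a standard projection or monotonicity estimate, but the precise orchestration of the $(x_k-u_k)+(u_k-y)$ splitting, so that the subsequent conditional expectation and summation over $k$ zero out the martingale piece and cleanly telescope the two distance sequences $\mathcal{D}(x_k,y)$ and $\|u_k-y\|^2$, is where the care is required.
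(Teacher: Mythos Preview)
Your proposal is correct and follows essentially the same route as the paper: non-expansiveness of projection applied to the block update (weighted by $\prob{i}^{-1}$ to form $\mathcal{D}$), the identity from \cref{def:deltas} to lift the block inner product to the full space, a second application of non-expansiveness to the auxiliary sequence $u_k$, the Young-type bound $\|\Delta_k+\eta_k\delta_k\|^2\le 2\|\Delta_k\|^2+2\eta_k^2\|\delta_k\|^2$, monotonicity of $F$, and a final multiplication by $\gamma_k^{r-1}/2$. The only cosmetic difference is that the paper simply adds the two projection inequalities directly rather than phrasing it as the splitting $x_k-y=(x_k-u_k)+(u_k-y)$; the resulting martingale cross term $(u_k-x_k)^T(\Delta_k+\eta_k\delta_k)$ (up to sign) is the same, and since its conditional expectation vanishes the sign is immaterial downstream.
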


\begin{proof} Let $k\geq 1$ be fixed. From \cref{def:err_D} and \cref{equ:update_rule_aRBIRG}, for any $y \in X$, we have:
	\begin{align}\label{eqn:D_split}
	\mathcal{D} \left(x_{k+1}, y\right)  =   \prob{{i_k}}^{-1} \left\Vert\left.  x_{k+1}^{(i_k)}-  y^{(i_k)} \right\Vert\right.^2+\sum_{i=1, \thinspace	 i \neq i_k}^{d} \prob{i}^{-1} \left\Vert\left.  x_{k}^{(i)}-  y^{(i)} \right\Vert\right.^2.
	\end{align}Next, we find a bound on the term $\left\Vert\left.  x_{k+1}^{(i_k)}- y^{(i_k)} \right\Vert\right.^2$. From the block structure of $X$, we have $y^{(i_k)}\in X_{i_k}$. Invoking the nonexpansiveness property of the projection mapping, the update rule \cref{equ:update_rule_aRBIRG}, \cref{def:reg_map}, and the preceding relation, we obtain:
		\begin{align*} 
	\left\Vert\left. x_{k+1}^{(i_k)}-  y^{(i_k)}\right\Vert\right.^2 \leq\left\Vert\left.  x_{k}^{(i_k)}-\gamma_{k}G_{k,i_k} \left( x_{k}\right) -y^{(i_k)}\right\Vert\right.^2.
	\end{align*}
		Combining the preceding relation with \cref{eqn:D_split}, we obtain:
	\begin{align*}
	 	\mathcal{D}\left(x_{k+1}, y\right)& \leq 
	 	 \sum_{i=1, \thinspace	 i \neq i_k}^{d} \prob{i}^{-1} \left\Vert\left.  x_{k}^{(i)}-  y^{(i)} \right\Vert\right.^2 +  \prob{{i_k}}^{-1}\left\Vert\left.  x_{k}^{(i_k)}-  y^{(i_k)} \right\Vert\right.^2 \nonumber \\
& - 2\,\prob{{i_k}}^{-1}\gamma_k\left( x^{(i_k)}_{k} - y^{(i_k)}\right)^TG_{k,i_k}\left(x_k\right)+ \prob{{i_k}}^{-1}\gamma_k^2 \left\Vert\left.G_{k,i_k}\left(x_k\right)\right\Vert\right.^2.
	\end{align*}
Invoking \cref{def:err_D} again, we obtain:	
		\begin{align}\label{eqn:erc_D_elementary_ineq1}
	 	\mathcal{D}\left(x_{k+1}, y\right) & \leq  \mathcal{D}\left(x_{k}, y\right) - 2\,\prob{{i_k}}^{-1}\gamma_k\left( x^{(i_k)}_{k} - y^{(i_k)}\right)^TG_{k,i_k}\left(x_k\right)+ \prob{{i_k}}^{-1}\gamma_k^2 \left\Vert\left.G_{k,i_k}\left(x_k\right)\right\Vert\right.^2.
	\end{align}
From \cref{def:deltas} and \cref{def:reg_map}, we can write:
\begin{align*}
&\prob{{i_k}}^{-1}\left( x^{(i_k)}_{k} - y^{(i_k)}\right)^TG_{k,i_k}\left(x_k\right) = \prob{{i_k}}^{-1}(x_k-y)^T\left({\mathbf{U}}_{i_k}G_{k,i_k}\left(x_k\right)\right) \\ 
&=  \prob{{i_k}}^{-1}(x_k-y)^T\left({\mathbf{U}}_{i_k}F_{i_k}(x_k)+\eta_k{\mathbf{U}}_{i_k}\tilde \nabla_{i_k}f\left(x_k\right)\right) \\
& {=(x_k-y)^T\left(F(x_k)-\Delta_k+\eta_k\tilde \nabla f(x_k)-\eta_k\delta_k\right) = (x_k-y)^T\left(G_k(x_k)-\Delta_k-\eta_k\delta_k\right).}
\end{align*}
	Combining the preceding inequality and relation \cref{eqn:erc_D_elementary_ineq1}, we obtain:
	\begin{align}\label{eqn:erc_D_elementary_ineq2}
	 	\mathcal{D}\left(x_{k+1}, y\right) & \leq  \mathcal{D}\left(x_{k}, y\right) {- 2 \gamma_k(x_k-y)^T\left(G_k(x_k)-\Delta_k-\eta_k\delta_k\right)}+ \prob{{i_k}}^{-1}\gamma_k^2 \left\Vert\left.G_{k,i_k}\left(x_k\right)\right\Vert\right.^2.
	\end{align}
	Consider the definition of the auxiliary sequence $\{u_k\}$ in \cref{lem:prebound_gap}. Invoking the nonexpansiveness property of the projection again, we can obtain:
\begin{align*}
\|u_{k+1}-y\|^2  &\leq{\|u_k-\gamma_k\left(\Delta_k+\eta_k\delta_k\right)-y\|^2}\\
&=\|u_k-y\|^2{-2\gamma_k(u_k-y)^T\left(\Delta_k+\eta_k\delta_k\right)} + \gamma_k^2\|\Delta_k+\eta_k\delta_k\|^2.
\end{align*}
Thus, we have: 
\begin{align*}
\|u_{k+1}-y\|^2\leq \|u_k-y\|^2{- 2\gamma_k(u_k-y)^T\left(\Delta_k+\eta_k\delta_k\right)} +2\gamma_k^2\|\Delta_k\|^2+2\gamma_k^2\eta_k^2\|\delta_k\|^2.
\end{align*}
Adding the preceding inequality and the inequality \cref{eqn:erc_D_elementary_ineq2} together, we obtain:
\begin{align}\label{eqn:erc_D_elementary_ineq3}
&2\gamma_k(x_k-y)^T G_k(x_k) \leq \left(\mathcal{D}\left(x_{k}, y\right) +\|u_k-y\|^2\right) -\left(\mathcal{D}\left(x_{k+1}, y\right) +\|u_{k+1}-y\|^2\right)\nonumber\\
& {+ 2\gamma_k(x_k-u_k)^T\left(\Delta_k+\eta_k\delta_k\right)}+2\gamma_k^2\left(\|\Delta_k\|^2+\eta_k^2\|\delta_k\|^2\right)+ \prob{{i_k}}^{-1}\gamma_k^2 \left\Vert\left.G_{k,i_k}\left(x_k\right)\right\Vert\right.^2.
\end{align}
From the monotonicity property of the mapping $F$ and \cref{def:reg_map}, we have:
\begin{align*}
(x_k-y)^TG_k(x_k)
%=(x_k-y)^T\left(F(x_k)+\eta_k\tilde \nabla f(x_k)\right) 
\geq (x_k-y)^TF(y)+\eta_k\tilde \nabla f(x_k)^T(x_k-y).
\end{align*}
This provides a lower bound on the left-hand side of \cref{eqn:erc_D_elementary_ineq3}. The inequality \cref{eqn:prebound_gap} is obtained by substituting this bound in \cref{eqn:erc_D_elementary_ineq3} and multiplying both sides by $\frac{\gamma_{k}^{r-1}}{2}$.
\end{proof}

In the following, we develop upper bounds for suboptimality and infeasibility of the weighted average iterate generated by \cref{alg:aRB-IRG}. Both of these error bounds are characterized in terms of the stepsize and the regularization parameter. 
\begin{proposition}[Error bounds for \cref{alg:aRB-IRG}]\label{prop:dual_gap_bound}
Let the sequence $\{\bar x_k\}$ be generated by \cref{alg:aRB-IRG}{, where $0\leq r<1$}. Suppose $\{\gamma_k\}$ and $\{\eta_k\}$ are strictly positive and nonincreasing sequences. Let \cref{assum:problem} and \cref{assum:random sample} hold {and} assume {that} the set $X$ is bounded, i.e., {$\|x\|\leq M$} for all $x \in X$ and some $M>0$. \\
\noindent (a) Let $x^*$ be an optimal solution to problem \cref{prob:main}. Then, for {all} $N\geq 1$:
\begin{align}\label{eqn:f_bound} 
&\EXP{f\left(\bar x_N\right)}-f(x^*) \leq \frac{\frac{4M^2\gamma_{{N}}^{r-1}}{\eta_{N}} +\sum_{k=0}^{N}\eta_k^{-1}\gamma_k^{r+1}\left(C_F^2+\eta_k^2C_f^2\right)}{\prob{{min}}\sum_{k=0}^{N}\gamma_k^r}.
\end{align}
\noindent (b) Consider the dual gap function in \cref{def:gap}. Then, for {all} $N\geq 1$:
\begin{align}\label{eqn:gap_bound} 
\EXP{\mathrm{GAP}\left(\bar{x}_N\right)} \leq\frac{ {4 M^2 \gamma_{{N}}^{r-1} }  +\sum_{k=0}^{N}\gamma_{{k}}^r\left( 2\prob{{min}}\eta_kC_fM+\gamma_k C_F^2 +  \gamma_{k}\eta_k^2C_f ^2 \right)}{\prob{{min}}\sum_{k=0}^{N} \gamma_{{k}}^r}.
\end{align}
\end{proposition}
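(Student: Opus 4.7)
The plan is to apply \cref{lem:prebound_gap} with appropriate choices of $y$, then sum over $k$, take expectations, and invoke the convexity of $f$ together with the weighted-averaging property from \cref{Lemma 2.10}. Two ingredients will be common to both parts: (i) the cross term $\gamma_k^r(u_k-x_k)^T(\Delta_k+\eta_k\delta_k)$ vanishes in expectation, since $u_k,x_k\in\mathcal{F}_k$ and \cref{Lemma 2.12}(a) gives $\EXP{\Delta_k\mid\mathcal{F}_k}=\EXP{\delta_k\mid\mathcal{F}_k}=0$; and (ii) the noise terms $\gamma_k^{r+1}(\|\Delta_k\|^2+\eta_k^2\|\delta_k\|^2)$ and $\tfrac{1}{2}\prob{i_k}^{-1}\gamma_k^{r+1}\|G_{k,i_k}(x_k)\|^2$ are controlled using \cref{Lemma 2.12}(b), the identity $\EXP{\prob{i_k}^{-1}\|G_{k,i_k}(x_k)\|^2\mid\mathcal{F}_k}=\|G_k(x_k)\|^2$, and the elementary bound $\|G_k(x_k)\|^2\leq 2C_F^2+2\eta_k^2C_f^2$. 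Together these produce a per-iteration bound of the form $\prob{\min}^{-1}\gamma_k^{r+1}(C_F^2+\eta_k^2C_f^2)$ on the expected stochastic contributions.

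For part (a), I would choose $y=x^*$ in \cref{eqn:prebound_gap}. Because $x^*\in\mathrm{SOL}(X,F)$ and $x_k\in X$, the VI inequality gives $F(x^*)^T(x_k-x^*)\geq 0$, so the $F$-term on the left drops out. Combining with $f(x_k)-f(x^*)\leq \tilde\nabla f(x_k)^T(x_k-x^*)$ from convexity of $f$, and then dividing the entire inequality by $\eta_k$, I obtain a telescoping-plus-noise bound on $\gamma_k^r(f(x_k)-f(x^*))$. Summing and using Jensen's inequality combined with \cref{Lemma 2.10} yields $\bigl(\sum_{k=0}^N\gamma_k^r\bigr)(f(\bar x_N)-f(x^*))\leq \sum_k \gamma_k^r(f(x_k)-f(x^*))$.

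The one non-routine step is handling the telescope with variable coefficients. Setting $a_k\triangleq \gamma_k^{r-1}/(2\eta_k)$ and $A_k\triangleq \mathcal{D}(x_k,x^*)+\|u_k-x^*\|^2$, Abel summation gives $\sum_{k=0}^N a_k(A_k-A_{k+1})\leq a_0A_0+\sum_{k=1}^N(a_k-a_{k-1})A_k$. Since $\{\gamma_k\},\{\eta_k\}$ are nonincreasing and $r<1$, the sequence $\{a_k\}$ is nondecreasing, and the boundedness of $X$ together with $u_k\in X$ and \cref{rem:err_D} provide the uniform bound $A_k\leq 8M^2/\prob{\min}$. This telescopes to $a_N\cdot 8M^2/\prob{\min}=4M^2\gamma_N^{r-1}/(\prob{\min}\eta_N)$, and multiplying numerator and denominator by $\prob{\min}$ recovers the form in \cref{eqn:f_bound}.

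For part (b), I would keep $y\in X$ arbitrary in \cref{eqn:prebound_gap}. Rearranging, $\gamma_k^rF(y)^T(x_k-y)\leq \text{RHS}_k-\gamma_k^r\eta_k\tilde\nabla f(x_k)^T(x_k-y)$, and using Cauchy-Schwarz with the subgradient bound $C_f$ and the diameter bound $\|x_k-y\|\leq 2M$ controls the last term by $2C_fM\gamma_k^r\eta_k$ uniformly in $y$. Applying Abel summation as above with $a_k=\gamma_k^{r-1}/2$ again yields a $y$-uniform telescoping bound $4M^2\gamma_N^{r-1}/\prob{\min}$, so after summing the per-$k$ estimates, \cref{Lemma 2.10} gives $\bigl(\sum_k \gamma_k^r\bigr)F(y)^T(\bar x_N-y)\leq 4M^2\gamma_N^{r-1}/\prob{\min}+2C_fM\sum_k\gamma_k^r\eta_k+\text{noise}$ for every $y\in X$; taking $\sup_{y\in X}$ on the left (which turns the left side into $\bigl(\sum_k\gamma_k^r\bigr)\mathrm{GAP}(\bar x_N)$) and then taking expectation delivers \cref{eqn:gap_bound}. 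The principal obstacle in both parts is the same bookkeeping issue of tracking the $\prob{\min}^{-1}$ factors through the telescope and the noise aggregation so that the final bounds match the stated form; otherwise the argument is a careful but routine summation.
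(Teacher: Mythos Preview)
Your proposal is correct and follows essentially the same approach as the paper: both proofs take $y=x^*$ for part (a) and generic $y\in X$ for part (b), use convexity of $f$ and the VI inequality to simplify the left side, apply the uniform diameter bound $\mathcal{D}(x_k,y)+\|u_k-y\|^2\leq 8M^2/\prob{\min}$, telescope (the paper writes this out by explicitly adding and subtracting $\tfrac{\gamma_{k-1}^{r-1}}{2\eta_{k-1}}(\cdot)$, which is precisely your Abel summation with $a_k$ nondecreasing), handle the stochastic terms via \cref{Lemma 2.12}, and finally take $\sup_y$ before expectation in part (b) since the bounded right-hand side is $y$-independent. Your bookkeeping of the $\prob{\min}^{-1}$ factors is also the same as the paper's.
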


\begin{proof}
We define the following terms for all $k\geq 0$, that appear in \cref{eqn:prebound_gap}:
\begin{align}\label{eqn:three_terms_abr}
& {\Theta_{k,1} \triangleq \gamma_k^r(x_k-u_k)^T\left(\Delta_k+\eta_k\delta_k\right)}, \quad \Theta_{k,2} \triangleq \gamma_k^{r+1}\left(\|\Delta_k\|^2+\eta_k^2\|\delta_k\|^2\right),\nonumber \\ 
&\Theta_{k,3} \triangleq 0.5\prob{{i_k}}^{-1}\gamma_k^{1+r}\left\Vert\left.G_{k,i_k}\left(x_k\right)\right\Vert\right.^2.
\end{align}
Next, we estimate the expected values of these terms. {Consider the notation of} $\mathcal{F}_k$ given by \cref{def:Fk}. Note that $x_k$ is $\mathcal{F}_k$--measurable. Also, from the definition of $u_k$ in \cref{lem:prebound_gap}, $u_k$ is $\mathcal{F}_k$--measurable. Note, however, that $\Theta_{k,j}$ is $\mathcal{F}_{k+1}$--measurable for all $j \in \{1,2,3\}$. Taking these into account and using the total probability law, for any $k\geq 0$ and $j \in \{1,2,3\}$, we have $ \EXP{\Theta_{k,j}} = \mathsf{E}_{\mathcal{F}_k}\left[\mathsf{E}_{i_k}\left[\Theta_{k,j}\mid \mathcal{F}_k\right]\right]$. From this relation and \cref{Lemma 2.12}, we have for any $k\geq 0$:
\begin{align}\label{eqn:estimate_deltas_1_2}
&\EXP{\Theta_{k,1}} = 0, \qquad \EXP{\Theta_{k,2}} = \left(\prob{{min}}^{-1}-1\right)\gamma_k^{r+1}\left(C_F^2+\eta_k^2C_f^2\right).
\end{align}
Also, using \cref{def:reg_map} and the triangle inequality, we can write:
\begin{align*}
&{\mathsf{E}_{i_k}\left[\Theta_{k,3}\mid \mathcal{F}_k\right]} = \sum_{i=1}^d\prob{i}\left(0.5\prob{{i}}^{-1}\gamma_k^{1+r}\left\Vert\left.G_{k,i}\left(x_k\right)\right\Vert\right.^2\right)\nonumber\\
&\leq \gamma_k^{1+r}\sum_{i=1}^d\left(\|F_i(x_k)\|^2+\eta_k^2\|\tilde \nabla_i f(x_k)\|^2\right)  = \gamma_k^{1+r}\|F(x_k)\|^2+\eta_k^2\|\tilde \nabla f(x_k)\|^2.
\end{align*}
{From the preceding inequality, we obtain:
\begin{align}\label{eqn:estimate_deltas_3}
\EXP{\Theta_{k,3}} & \leq  \gamma_k^{1+r}\left(C_F^2+\eta_k^2C_f^2\right).
\end{align}
}We are now ready to show the inequalities \cref{eqn:f_bound} and \cref{eqn:gap_bound} as follows:\\
\noindent (a) Consider \cref{eqn:prebound_gap}. From the definition of subgradients of the convex function $f$, we have that $f(x_k)-f(y) \leq \tilde \nabla f(x_k)^T(x_k-y)$. Thus, {from \cref{eqn:three_terms_abr}} we obtain for any $y \in X$:
\begin{align*}
&\gamma_k^rF(y)^T(x_k-y) +\gamma_k^r\eta_k\left(f(x_k)-f(y)\right)\leq \frac{\gamma_k^{r-1}}{2}\left(\mathcal{D}\left(x_{k}, y\right) +\|u_k-y\|^2\right) \nonumber\\
&-\frac{\gamma_k^{r-1}}{2}\left(\mathcal{D}\left(x_{k+1}, y\right) +\|u_{k+1}-y\|^2\right)+ \Theta_{k,1}+\Theta_{k,2}+\Theta_{k,3}.
\end{align*}
Let us substitute $y:=x^*$, where $x^*$ denotes an optimal solution to \cref{prob:main}. Note that $x^*$ must be a feasible solution to \cref{prob:main}, i.e., $F(x^*)^T(x_k-x^*) \geq 0$. Thus, we obtain:
\begin{align}\label{eqn:gap_prop_ineq1_a}
			&\gamma_k^r\eta_k\left(f(x_k)-f(x^*)\right) \leq  \frac{\gamma_k^{r-1}}{2}\left(\mathcal{D}\left(x_{k}, x^*\right) +\|u_k-x^*\|^2\right) \nonumber\\
&-\frac{\gamma_k^{r-1}}{2}\left(\mathcal{D}\left(x_{k+1}, x^*\right) +\|u_{k+1}-x^*\|^2\right)+\Theta_{k,1}+\Theta_{k,2}+\Theta_{k,3}.
\end{align}
Dividing both sides by $\eta_k$ and adding and subtracting $\frac{\gamma_{{k-1}}^{r-1}}{2\eta_{k-1}} \left( \mathcal{D}\left(x_k,x^*\right) + \left\Vert u_k-x^* \right\Vert^2 \right)$ in the right-hand side of \cref{eqn:gap_prop_ineq1_a}, we obtain for $k\geq 1$:
\begin{align}\label{eqn:gap_prop_ineq2_a}
&\gamma_k^r\left(f(x_k)-f(x^*)\right) \leq  \frac{\gamma_{{k-1}}^{r-1}}{2\eta_{k-1}} \left(  \mathcal{D}\left(x_k,x^*\right)  + \left\Vert u_k-x^* \right\Vert^2 \right) \nonumber \\
&- \frac{\gamma_{{k}}^{r-1}}{2\eta_k} \left(  \mathcal{D}\left(x_{k+1},x^*\right)  + \left\Vert u_{k+1}-x^* \right\Vert^2 \right)\nonumber \\ 
&+ \frac{1}{2} \left(\frac{\gamma_{{k}}^{r-1}}{\eta_k} - \frac{\gamma_{{k-1}}^{r-1}}{\eta_{k-1}}\right)\left( \mathcal{D}\left(x_k,x^*\right) + \left\Vert u_k-x^* \right\Vert^2 \right)+\eta_k^{-1}\left( \Theta_{k,1}+\Theta_{k,2}+\Theta_{k,3}\right).
\end{align}
Since $r-1 < 0$ and that $\{\gamma_k\}$ and $\{\eta_k\}$ are nonincreasing, we have $\frac{\gamma_{{k}}^{r-1}}{\eta_k} - \frac{\gamma_{{k-1}}^{r-1}}{\eta_{k-1}} \geq 0$. Also, from the boundedness of the set $X$, since $x_k$, $x^*$, and {$u_k$} belong to $ X$, using \cref{rem:err_D} and the triangle inequality, we have:
\begin{align}\label{eqn:gap_prop_ineq3_a}
\mathcal{D}\left(x_k,x^*\right)  + \left\Vert u_k-x^* \right\Vert^2 
\leq \prob{{min}}^{-1}\left\Vert x_k-x^* \right\Vert^2 + \left\Vert u_k-x^* \right\Vert^2
\leq 4M^2\left(\prob{{min}}^{-1}+1\right)\leq \frac{8M^2}{\prob{{min}}}.
\end{align}
Summing over \cref{eqn:gap_prop_ineq2_a} from $k=1$ to $N$ and using \cref{eqn:gap_prop_ineq3_a}, we obtain:
\begin{align*}
&\sum_{k=1}^{N}\gamma_k^r\left(f(x_k)-f(x^*)\right) \leq \frac{\gamma_{0}^{r-1}}{2\eta_0} \left(  \mathcal{D}\left(x_1,x^*\right)  + \left\Vert u_1-x^* \right\Vert^2 \right) 
\nonumber \\ 
&+4M^2\prob{{min}}^{-1}\left(\frac{\gamma_{{N}}^{r-1}}{\eta_{N}} - \frac{\gamma_{{0}}^{r-1}}{\eta_{0}}\right)+\sum_{k=1}^{N}\eta_k^{-1}\left(\Theta_{k,1}+\Theta_{k,2}+\Theta_{k,3}\right),
\end{align*}
where we drop the nonpositive term. From relation \cref{eqn:gap_prop_ineq1_a} when $k=0$, we have:
\begin{align*}
\gamma_0^r\left(f(x_0)-f(x^*)\right)& \leq  \frac{\gamma_0^{r-1}}{2\eta_0}\left(\mathcal{D}\left(x_{0}, x^*\right) +\|u_0-x^*\|^2\right) \nonumber\\
&-\frac{\gamma_0^{r-1}}{2\eta_0}\left(\mathcal{D}\left(x_{1}, x^*\right) +\|u_{1}-x^*\|^2\right)+ \eta_0^{-1}\left(\Theta_{0,1}+\Theta_{0,2}+\Theta_{0,3}\right).
\end{align*}
Adding the last two inequalities, {multiplying and dividing the left-hand side by $\sum_{k=0}^{N}\gamma_k^r$, and then, invoking} \cref{Lemma 2.10} and convexity of $f$, we obtain:
\begin{align*}
&\left(\sum_{k=0}^{N}\gamma_k^r\right)\left(f\left(\bar x_N\right)-f(x^*)\right) \leq \frac{\gamma_{0}^{r-1}}{2\eta_0} \left(  \mathcal{D}\left(x_0,x^*\right)  + \left\Vert u_0-x^* \right\Vert^2 \right) 
\nonumber \\ 
&+4M^2\prob{{min}}^{-1}\left(\frac{\gamma_{{N}}^{r-1}}{\eta_{N}} - \frac{\gamma_{{0}}^{r-1}}{\eta_{0}}\right)+\sum_{k=0}^{N}\eta_k^{-1}\left( \Theta_{k,1}+\Theta_{k,2}+\Theta_{k,3}\right).
\end{align*}
Taking the expectation on both sides and invoking \cref{eqn:gap_prop_ineq3_a}, we obtain:
\begin{align*}
&\EXP{f\left(\bar x_N\right)}-f(x^*) \leq \frac{\frac{4M^2\prob{{min}}^{-1}\gamma_{{N}}^{r-1}}{\eta_{N}} +\sum_{k=0}^{N}\eta_k^{-1}\EXP{\Theta_{k,1}+\Theta_{k,2}+\Theta_{k,3}}}{\sum_{k=0}^{N}\gamma_k^r}.
\end{align*}
From the relations \cref{eqn:estimate_deltas_1_2} and \cref{eqn:estimate_deltas_3}, we obtain:
\begin{align*}
&\EXP{f\left(\bar x_N\right)}-f(x^*) \leq \frac{\frac{4M^2\prob{{min}}^{-1}\gamma_{{N}}^{r-1}}{\eta_{N}}+\sum_{k=0}^{N}\eta_k^{-1}\left(\prob{{min}}^{-1}\gamma_k^{r+1}\left(C_F^2+\eta_k^2C_f^2\right)\right)}{\sum_{k=0}^{N}\gamma_k^r},
\end{align*}
which implies the inequality \cref{eqn:f_bound}. 

\noindent (b) From the Cauchy-Schwarz inequality, the definitions of $C_f$ and $M$, and the triangle inequality, we have $ \tilde \nabla f(x_k)^T(y-x_k) \leq \left\| \tilde \nabla f(x_k)\right\|\|x_k-y\| \leq 2C_f M.$
Adding the preceding inequality with the relation \cref{eqn:prebound_gap}, {from \cref{eqn:three_terms_abr}} we obtain:
\begin{align}\label{eqn:prop_gap_proof_ineq0}
&\gamma_k^rF(y)^T(x_k-y) \leq \frac{\gamma_k^{r-1}}{2}\left(\mathcal{D}\left(x_{k}, y\right) +\|u_k-y\|^2\right) \nonumber\\
&-\frac{\gamma_k^{r-1}}{2}\left(\mathcal{D}\left(x_{k+1}, y\right) +\|u_{k+1}-y\|^2\right)+ 2\gamma_k^r\eta_kC_f M+ \Theta_{k,1}+\Theta_{k,2}+\Theta_{k,3}.
\end{align}
Adding and subtracting the term $\frac{\gamma_{{k-1}}^{r-1}}{2} \left( \mathcal{D}\left(x_k,y\right) + \left\Vert u_k-y \right\Vert^2 \right)$, we obtain:
\begin{align*}
			&\gamma_k^rF(y)^T(x_k-y) \leq  \frac{\gamma_{{k-1}}^{r-1}}{2} \left(  \mathcal{D}\left(x_k,y\right)  + \left\Vert u_k-y \right\Vert^2 \right) \nonumber \\
			&- \frac{\gamma_{{k}}^{r-1}}{2} \left(  \mathcal{D}\left(x_{k+1},y\right)  + \left\Vert u_{k+1}-y \right\Vert^2 \right)+ \frac{1}{2} \left(\gamma_{{k}}^{r-1} - \gamma_{{k-1}}^{r-1}\right)\left( \mathcal{D}\left(x_k,y\right) + \left\Vert u_k-y \right\Vert^2 \right)\nonumber \\
&+ 2\gamma_k^r\eta_kC_f M+ \Theta_{k,1}+\Theta_{k,2}+\Theta_{k,3}.
\end{align*}
Substituting the bound given by \cref{eqn:gap_prop_ineq3_a} in the preceding relation, we obtain:
\begin{align*}
			&\gamma_k^rF(y)^T(x_k-y) \leq  \frac{\gamma_{{k-1}}^{r-1}}{2} \left(  \mathcal{D}\left(x_k,y\right)  + \left\Vert u_k-y \right\Vert^2 \right) \nonumber \\
			&- \frac{\gamma_{{k}}^{r-1}}{2} \left(  \mathcal{D}\left(x_{k+1},y\right)  + \left\Vert u_{k+1}-y \right\Vert^2 \right)+ 4M^2\prob{{min}}^{-1} \left(\gamma_{{k}}^{r-1} - \gamma_{{k-1}}^{r-1}\right)\nonumber \\
&+ 2\gamma_k^r\eta_kC_f M+ \Theta_{k,1}+\Theta_{k,2}+\Theta_{k,3}.
\end{align*}
Summing both sides from $k = 1 $ to $N$, we obtain:
\begin{align}\label{eqn:prop_gap_proof_ineq1}
\sum_{k=1}^{N}\gamma_{{k}}^r F(y)^T \left(x_k-y\right)& \leq  \frac{\gamma_{{0}}^{r-1}}{2} \left( \mathcal{D}\left(x_1,y\right) + \left\Vert u_1-y \right\Vert^2 \right)+ 4 M^2\prob{{min}}^{-1} \left( \gamma_{{N}}^{r-1} - \gamma_{{0}}^{r-1}\right) \nonumber \\
			%&- \frac{\gamma_{{N}}^{r-1}}{2} \left(  \mathcal{D}\left(x_{N},y\right)  + \left\Vert u_{N}-y \right\Vert^2 \right) \nonumber \\
			& +\sum_{k=1}^{N}\left(  2\gamma_k^r\eta_kC_f M+ \Theta_{k,1}+\Theta_{k,2}+\Theta_{k,3}\right).
\end{align}
Writing the inequality \cref{eqn:prop_gap_proof_ineq0} for $k =0$, we have:
	\begin{align}\label{eqn:prop_gap_proof_ineq2}
\gamma_0^rF(y)^T(x_0-y) &\leq \frac{\gamma_0^{r-1}}{2}\left(\mathcal{D}\left(x_{0}, y\right) +\|u_0-y\|^2\right)-\frac{\gamma_0^{r-1}}{2}\left(\mathcal{D}\left(x_{1}, y\right) +\|u_{1}-y\|^2\right) \nonumber\\
&+ 2\gamma_0^r\eta_0C_f M+ \Theta_{0,1}+\Theta_{0,2}+\Theta_{0,3}.
\end{align}
Adding \cref{eqn:prop_gap_proof_ineq1} and \cref{eqn:prop_gap_proof_ineq2} together, we obtain:
\begin{align}\label{eqn:prop_gap_proof_ineq3}
\sum_{k=0}^{N}\gamma_{{k}}^r F(y)^T \left(x_k-y\right) &\leq  \frac{\gamma_{{0}}^{r-1}}{2} \left( \mathcal{D}\left(x_0,y\right) + \left\Vert u_0 -y \right\Vert^2 \right) + 4 M^2\prob{{min}}^{-1} \left( \gamma_{{N}}^{r-1} - \gamma_{{0}}^{r-1}\right) \nonumber \\& +\sum_{k=0}^{N}\left(  2\gamma_k^r\eta_kC_f M+ \Theta_{k,1}+\Theta_{k,2}+\Theta_{k,3}\right).
\end{align}
Recalling $\bar{x}_{N} = \sum_{k=0}^N \lambda_{k,N} x_k$ in \cref{Lemma 2.10}, applying the bound given by \cref{eqn:gap_prop_ineq3_a}, and using the triangle inequality, we obtain:
	\begin{align*}
\left(\sum_{k=0}^{N} \gamma_{{k}}^r \right) F(y)^T \left(\bar{x}_N-y\right) \leq& {4 M^2\prob{{min}}^{-1}  \gamma_{{N}}^{r-1} }  +\sum_{k=0}^{N}\left(  2\gamma_k^r\eta_kC_f M+ \Theta_{k,1}+\Theta_{k,2}+\Theta_{k,3}\right).
\end{align*}
Taking the supremum with respect to $y$ over the set $X$ from the left-hand side, invoking \cref{def:gap}, and then dividing both sides by $\sum_{k=0}^{N} \gamma_{{k}}^r $, we obtain:
	\begin{align*}
\text{GAP}\left(\bar x_N\right) \leq \frac{4 M^2\prob{{min}}^{-1}  \gamma_{{N}}^{r-1}   +\sum_{k=0}^{N}\left(  2\gamma_k^r\eta_kC_f M+ \Theta_{k,1}+\Theta_{k,2}+\Theta_{k,3}\right)}{\sum_{k=0}^{N} \gamma_{{k}}^r }.
\end{align*}
Taking the expectation on both sides, using the relations \cref{eqn:estimate_deltas_1_2} and \cref{eqn:estimate_deltas_3}, and rearranging the terms, we obtain the inequality \cref{eqn:gap_bound}.
\end{proof}
We are now ready to present the convergence rate results of the proposed method. 
\begin{theorem}[Convergence rate statements for \cref{alg:aRB-IRG}]\label{thm:a-IRG_rate_results}
Consider \cref{alg:aRB-IRG}. Let \cref{assum:problem} and \cref{assum:random sample} hold and assume {that} the set $X$ is bounded such that $\|x\|\leq M$ for all $x \in X$ and some $M>0$. Suppose for {all} $k\geq 0$, $\gamma_k:=\frac{\gamma_0}{\sqrt{k+1}}$ and $\eta_k:=\frac{\eta_0}{(k+1)^b}$, where $\gamma_0>0$, $\eta_0>0$, and $0<b<0.5$. Then, for any $0\leq r <1$, the following results hold:\\
\noindent (i) Let $x^*$ be an optimal solution to the problem \cref{prob:main}. Then, for {all} $N\geq 2^{\frac{2}{1-r}}-1$:
\begin{align}\label{eqn:f_rate} 
&\EXP{f\left(\bar x_N\right)}-f(x^*) \leq \frac{2-r}{\prob{{min}}\eta_0}\left(\frac{4M^2}{\gamma_0}+\frac{\gamma_0\left(C_F^2+\eta_0^2C_f^2\right)}{0.5-0.5r+b}\right)\frac{1}{(N+1)^{0.5-b}}.
\end{align}
\noindent (ii) Consider the dual gap function in \cref{def:gap}. Then, for {all} $N\geq 2^{\frac{2}{1-r}}-1$:
\begin{align}\label{eqn:gap_rate} 
\EXP{\mathrm{GAP}\left(\bar{x}_N\right)} \leq \frac{2-r}{\prob{{min}}}\left(\frac{4M^2}{\gamma_0}+\frac{\gamma_0\left(C_F^2+\eta_0^2C_f^2\right)}{0.5-0.5r}+\frac{2\prob{{min}C_fM}\eta_0}{1-0.5r-b}\right)\frac{1}{(N+1)^b}.
\end{align}
\end{theorem}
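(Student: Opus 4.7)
The plan is to specialize the two error bounds in \cref{prop:dual_gap_bound} to the prescribed choices $\gamma_k = \gamma_0/(k+1)^{1/2}$ and $\eta_k = \eta_0/(k+1)^b$, and then invoke \cref{Lemma 2.13} to control each harmonic-type sum.

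First, I would compute the common denominator: since $\sum_{k=0}^N \gamma_k^r = \gamma_0^r\sum_{k=0}^N (k+1)^{-r/2}$ with $r/2 \in [0,1/2)$, the lower bound in \cref{Lemma 2.13} yields $\sum_{k=0}^N \gamma_k^r \geq \gamma_0^r (N+1)^{1-r/2}/(2-r)$ once $N \geq 2^{1/(1-r/2)}-1$. The stronger hypothesis $N \geq 2^{2/(1-r)}-1$ in the statement is chosen so that all applications of \cref{Lemma 2.13} below are valid; the most restrictive exponent encountered is $\alpha=(r+1)/2$, which yields exactly this threshold.

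For part (i), substituting into \cref{eqn:f_bound}, the leading term becomes $4M^2\gamma_N^{r-1}/\eta_N = 4M^2\gamma_0^{r-1}\eta_0^{-1}(N+1)^{(1-r)/2+b}$. For the series in the numerator I would bound $\eta_k \leq \eta_0$ inside the quadratic factor and use $(k+1)^{-(r+1)/2-b}\leq (k+1)^{-(r+1)/2+b}$, reducing the summand to $\gamma_0^{r+1}\eta_0^{-1}(C_F^2 + \eta_0^2 C_f^2)(k+1)^{-(r+1)/2+b}$. Applying \cref{Lemma 2.13} with $\alpha=(r+1)/2 - b\in(0,1)$ then bounds the sum by $(N+1)^{(1-r)/2+b}/(0.5-0.5r+b)$. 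After dividing by the denominator lower bound, both contributions carry the common factor $(N+1)^{b-1/2}$; regrouping constants into $(2-r)/(\prob{min}\eta_0)$ multiplied by $4M^2/\gamma_0 + \gamma_0(C_F^2+\eta_0^2 C_f^2)/(0.5-0.5r+b)$ recovers \cref{eqn:f_rate}.

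For part (ii), substituting into \cref{eqn:gap_bound}, I would handle three numerator series separately: $\sum \gamma_k^r\eta_k$, with $\alpha = r/2+b \in (0,1)$, which yields a bound of order $(N+1)^{1-r/2-b}/(1-r/2-b)$; and $\sum \gamma_k^{r+1}$ together with $\sum \gamma_k^{r+1}\eta_k^2$ (the latter after bounding $\eta_k^2\leq \eta_0^2$ and dropping the $(k+1)^{-2b}$ factor), both reducing to $\alpha=(r+1)/2$ with bound $2(N+1)^{(1-r)/2}/(1-r)$. Dividing by the denominator lower bound, the $\eta_k$-weighted term decays as $(N+1)^{-b}$, while the remaining contributions and the leading $4M^2\gamma_N^{r-1}$ term decay as $(N+1)^{-1/2}$. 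Since $b<1/2$, I would use $(N+1)^{-1/2}\leq (N+1)^{-b}$ to unify the decay into $(N+1)^{-b}$ and collect the constants into \cref{eqn:gap_rate}. The main obstacle is bookkeeping: the $\prob{min}$ factor multiplying the $\eta_k C_f M$ term in \cref{eqn:gap_bound} cancels against the $\prob{min}$ in the denominator, so the placement of $\prob{min}$ inside the middle summand of the final bound must be tracked carefully across the simplification.
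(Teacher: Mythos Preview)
Your proposal is correct and follows essentially the same approach as the paper: both substitute the prescribed $\gamma_k,\eta_k$ into the error bounds of \cref{prop:dual_gap_bound}, bound each harmonic-type sum via \cref{Lemma 2.13}, verify that $N\geq 2^{2/(1-r)}-1$ covers the most restrictive exponent $\alpha=(r+1)/2$, and finally use $(N+1)^{-1/2}\leq (N+1)^{-b}$ to unify the decay in the gap bound. The paper organizes the computation by naming six auxiliary quantities $\Lambda_{N,1},\ldots,\Lambda_{N,6}$, but the substance of the estimates is identical to what you outline.
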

\begin{proof} Let us define the following terms: 
\begin{align*}%\label{eqn:three_terms_abr2}
& \Lambda_{N,1} \triangleq \prob{{min}}\sum_{k=0}^{N}\gamma_k^r, \quad  \Lambda_{N,2}\triangleq  \frac{4M^2\gamma_{{N}}^{r-1}}{\eta_{N}} ,\quad \Lambda_{N,3} \triangleq \left(C_F^2+\eta_0^2C_f^2\right)\sum_{k=0}^{N}\eta_k^{-1}\gamma_k^{r+1},\nonumber\\ 
& \Lambda_{N,4} \triangleq 4M^2\gamma_{{N}}^{r-1}, \quad  \Lambda_{N,5} \triangleq \left(C_F^2+\eta_0^2C_f^2\right)\sum_{k=0}^{N}\gamma_k^{r+1}, \quad \Lambda_{N,6} \triangleq 2\prob{{min}}C_fM \sum_{k=0}^{N}\eta_k\gamma_k^r.
\end{align*}
Note that from \cref{eqn:f_bound} and \cref{eqn:gap_bound}, we have:
\begin{align}\label{eqn:bounds_in_Lambdas}
\EXP{f\left(\bar x_N\right)}-f(x^*) \leq \frac{\Lambda_{N,2}+\Lambda_{N,3}}{\Lambda_{N,1}}, \quad \EXP{\text{GAP}\left(\bar{x}_N\right)} \leq \frac{\Lambda_{N,4}+\Lambda_{N,5}+\Lambda_{N,6}}{\Lambda_{N,1}}.
\end{align}
Next, we apply \cref{Lemma 2.13} to estimate the terms $\Lambda_{N,i}$. Substituting $\gamma_k$ and $\eta_k$ by their update rules, we obtain:
\begin{align*}%\label{eqn:Lambda1}
\Lambda_{N,1} &=\prob{{min}}\sum_{k=0}^{N}\frac{\gamma_0^r}{(k+1)^{0.5r}}\geq \frac{\prob{{min}}\gamma_0^r(N+1)^{1-0.5r}}{2(1-0.5r)},\\  
\Lambda_{N,2} &= \frac{4M^2(N+1)^{0.5(1-r)+b}}{\eta_0\gamma_0^{1-r}},\quad \Lambda_{N,4} = \frac{4M^2(N+1)^{0.5(1-r)}}{\gamma_0^{1-r}}, \\
\Lambda_{N,3} &= \sum_{k=0}^{N}\frac{\left(C_F^2+\eta_0^2C_f^2\right)\gamma_0^{1+r}}{\eta_0(k+1)^{0.5(1+r)-b}} \leq\frac{\gamma_0^{1+r}\left(C_F^2+\eta_0^2C_f^2\right)(N+1)^{1-0.5(1+r)+b}}{\eta_0(1-0.5(1+r)+b)},\\
\Lambda_{N,5} &= \left(C_F^2+\eta_0^2C_f^2\right)\sum_{k=0}^{N}\frac{\gamma_0^{r+1}}{(k+1)^{0.5(1+r)}}\leq \frac{\left(C_F^2+\eta_0^2C_f^2\right)\gamma_0^{r+1}(N+1)^{1-0.5(1+r)}}{1-0.5(1+r)},\\
\Lambda_{N,6} &=2\prob{{min}}C_fM \eta_0\gamma_0^r\sum_{k=0}^{N}\frac{1}{(k+1)^{0.5r+b}}\leq  \frac{2\prob{{min}}C_fM \eta_0\gamma_0^r (N+1)^{1-0.5r-b}}{1-0.5r-b}.
\end{align*}
For these inequalities to hold, we need to ensure that {the} conditions of \cref{Lemma 2.13} are met. Accordingly, we must have $0\leq 0.5r <1$, $0\leq 0.5(1+r)-b <1$, $0\leq 0.5r+b<1$, and $0 \leq 0.5(1+r)<1$. These relations hold because $0\leq r<1$ and $0<b<0.5$. Another set of conditions when applying \cref{Lemma 2.13} includes $N\geq \max\left\{2^{1/(1-0.5r)},2^{1/(1-0.5(1+r)+b)},2^{1/(1-0.5r-b)},2^{1/(1-0.5(1+r))}\right\}-1$. {This relation is indeed} satisfied as a consequence of $N\geq 2^{\frac{2}{1-r}}-1$, {$0<b<0.5$}, and $0\leq r<1$. We conclude that all the necessary conditions for applying \cref{Lemma 2.13} and obtaining the aforementioned bounds for the terms $\Lambda_{N,i}$ are satisfied. To show that the inequalities \cref{eqn:f_rate} and \cref{eqn:gap_rate} hold, it suffices to substitute the preceding {bounds on} the terms $\Lambda_{N,i}$ into the two inequalities given by \cref{eqn:bounds_in_Lambdas}. The details are as follows:
\begin{align*}
&\EXP{f\left(\bar x_N\right)}-f(x^*) \leq \frac{\Lambda_{N,2}+\Lambda_{N,3}}{\Lambda_{N,1}}=\frac{2-r}{\prob{{min}}\gamma_0^r(N+1)^{1-0.5r}}\left(\frac{4M^2(N+1)^{0.5-0.5r+b}}{\eta_0\gamma_0^{1-r}}\right.\\
&\left.+\left(\frac{\gamma_0^{1+r}}{\eta_0}\right)\frac{\left(C_F^2+\eta_0^2C_f^2\right)(N+1)^{0.5-0.5r+b}}{0.5-0.5r+b}\right). 
\end{align*}
The inequality \cref{eqn:f_rate} is obtained by rearranging the terms in the preceding relation.
\begin{align*}
&\EXP{\text{GAP}\left(\bar{x}_N\right)} \leq \frac{\Lambda_{N,4}+\Lambda_{N,5}+\Lambda_{N,6}}{\Lambda_{N,1}} \leq\frac{2-r}{\prob{{min}}\gamma_0^r(N+1)^{1-0.5r}}\left(\frac{4M^2(N+1)^{0.5-0.5r}}{\gamma_0^{1-r}} \right.\\
&\left.+\frac{\left(C_F^2+\eta_0^2C_f^2\right)\gamma_0^{r+1}(N+1)^{0.5-0.5r}}{0.5-0.5r}+ \frac{2\prob{{min}}C_fM \eta_0\gamma_0^r (N+1)^{1-0.5r-b}}{1-0.5r-b}\right).
\end{align*}
Then, \cref{eqn:gap_rate} can be obtained by rearranging the terms in the preceding inequality.
\end{proof} 
\begin{remark}[Iteration complexity of \cref{alg:aRB-IRG}]\label{rem:iter_complexity}
As an immediate result from \cref{thm:a-IRG_rate_results}, choosing $\gamma_k:=\frac{\gamma_0}{\sqrt{k+1}}$ and $\eta_k:=\frac{\eta_0}{\sqrt[4]{k+1}}$, we obtain: $$\EXP{f\left(\bar x_N\right)-f(x^*)} =\EXP{\text{GAP}\left(\bar{x}_N\right)} = {\mathcal{O}\left(\frac{1}{\sqrt[4]{N}}\right)}.$$ This implies that \cref{alg:aRB-IRG} achieves an iteration complexity of {$\mathcal{O}\left(\epsilon^{-4}\right)$ in solving \cref{prob:main}, where $\epsilon>0$ denotes the expected tolerance in both of the suboptimality and infeasibility metrics}.
\end{remark}
{The rate statements derived in \cref{thm:a-IRG_rate_results} are in a mean sense. In the following, we consider a deterministic variant of \cref{alg:aRB-IRG} where we suppress the randomized block-coordinate scheme. The outline of this deterministic method is presented by \cref{alg:a-IRG}. In \cref{cor:a-IRG_rate_results}, we show that non-asymptotic deterministic rate statements can be derived for \cref{alg:a-IRG}.   
\begin{algorithm} [h]
	\caption{a-IRG}
	\begin{algorithmic}[1]
		\STATE {\textbf{Input:} An arbitrary initial point $x_0 \in X$, $\bar{x}_0 := x_0$, initial stepsize $\gamma_{0} > 0$, initial regularization parameter $\eta_{0} > 0$, a scalar $0\leq r<1$, and $S_0 := \gamma_0^r$.}
		\FOR {{k = 0, 1, \dots}}
		\STATE {Evaluate $F(x_k)$ and $\tilde \nabla f(x_k)$ where $\tilde \nabla f(x_k) \in \partial f(x_k)$.}

		\STATE {For all $i \in \{1,\ldots,d\}$, do the following updates:
%				${x_{k+1}^{(i)}} :=$
		\begin{align}\label{equ:update_rule_aIRG} 
		{x_{k+1}^{(i)}} :=
		\proj[X_{i}]{ x_{k}^{(i)}-\gamma_{k}\left(F_{i}\left( x_{k}\right) + \eta_{k} \tilde \nabla_{{i}} f\left(x_{k}\right)\right)}.
		\end{align}}
		\STATE {Obtain $\gamma_{k+1}$ and $\eta_{k+1}$ (cf. \cref{cor:a-IRG_rate_results} for the update rules).}

		\STATE {Update the averaged iterate $\bar{x}_{k}$ as follows:
			\begin{align}\label{eqn:ave_step_of_alg_v2}
			S_{k+1} &:= S_k + \gamma_{k+1}^r,\quad	\bar{x}_{k+1}:= \frac{S_k\bar{x}_{k}+\gamma_{k+1}^rx_{k+1}}{S_{k+1}}.
			\end{align}}
		\ENDFOR
	\end{algorithmic} \label{alg:a-IRG}
\end{algorithm}
\begin{corollary}[Convergence rate statements for \cref{alg:a-IRG}]\label{cor:a-IRG_rate_results}
Consider \cref{alg:a-IRG}. Let \cref{assum:problem} hold and assume that the set $X$ is bounded such that $\|x\|\leq M$ for all $x \in X$ and some $M>0$. Suppose for $k\geq 0$, $\gamma_k:=\frac{\gamma_0}{\sqrt{k+1}}$ and $\eta_k:=\frac{\eta_0}{(k+1)^b}$, where $\gamma_0>0$, $\eta_0>0$, and $0<b<0.5$. Then, for any $0\leq r <1$, the following results hold:\\
\noindent (i) Let $x^*$ be an optimal solution to the problem \cref{prob:main}. Then, for all $N\geq 2^{\frac{2}{1-r}}-1$:
\begin{align}\label{eqn:f_rate_deter} 
&{f\left(\bar x_N\right)}-f(x^*) \leq \frac{2-r}{\eta_0}\left(\frac{4M^2}{\gamma_0}+\frac{\gamma_0\left(C_F^2+\eta_0^2C_f^2\right)}{0.5-0.5r+b}\right)\frac{1}{(N+1)^{0.5-b}}.
\end{align}
\noindent (ii) Consider the dual gap function in \cref{def:gap}. Then, for all $N\geq 2^{\frac{2}{1-r}}-1$:
\begin{align}\label{eqn:gap_rate_deter} 
{\mathrm{GAP}\left(\bar{x}_N\right)} \leq (2-r)\left(\frac{4M^2}{\gamma_0}+\frac{\gamma_0\left(C_F^2+\eta_0^2C_f^2\right)}{0.5-0.5r}+\frac{2C_fM\eta_0}{1-0.5r-b}\right)\frac{1}{(N+1)^b}.
\end{align}
\end{corollary}
\begin{proof}
See \cref{app:a-IRG_rate_results}.
\end{proof}
}
\section{Addressing the case where $X$ is unbounded}\label{sec:conv_unbounded}
The convergence and rate statements provided by \cref{thm:a-IRG_rate_results} require the set $X$ to be bounded. We, however, note that in some applications, e.g., in the models presented in \cref{ex:opt_compl_const} and \cref{ex:opt_nl_const}, this assumption may not hold. Accordingly, in this section, our aim is to analyze the convergence of \cref{alg:aRB-IRG} when $X$ is unbounded. To this end, we consider the following main assumption:
\begin{assumption}\label{assum:problem_unboundedX} Consider problem \cref{prob:main} under the following conditions:

\noindent (a) The set $X_i$ is nonempty, closed, and convex for all $i=1,\dots, d$.

\noindent (b) The function $f$ is continuously differentiable and $\mu_f$--strongly convex over $X$.

\noindent  (c) The mapping $F:\mathbb{R}^n \to \mathbb{R}^n$ is continuous and monotone over $X$. 

\noindent (d) The solution set $\mathrm{SOL}(X,F)$ {is} nonempty.
\end{assumption}
\begin{remark}[Existence and uniqueness of the optimal solution]\label{rem:unique_x_star}
	Under \cref{assum:problem_unboundedX}, the constraint set of \cref{prob:main}, i.e., $\text{SOL}(X,F)$, is nonempty, closed, and convex. The convexity of this set is implied by Theorem 2.3.5 in \cite{FacchineiPang2003} and its closedness property is obtained by the continuity of the mapping $F$ and closedness of the set $X$. Because in the problem \cref{prob:main}, the objective function $f$ is strongly convex and that the constraint set is nonempty, closed, and convex, {we conclude from Proposition 1.1.2 in \cite{BertsekasNLPBook2016}} that the problem \cref{prob:main} has a unique optimal solution. Throughout this section, we let $x^*$ denote this unique optimal solution. 
\end{remark}
\subsection{Preliminaries}
In this part, we provide some preliminary results that will be used in the convergence analysis. We begin by defining a generalized {variant of the} Tikhonov trajectory that is associated with the problem of interest in this paper. 
\begin{definition}[Tikhonov trajectory]\label{def:Tikh_traj}
Consider the problem \cref{prob:main} under \cref{assum:problem_unboundedX}. Let $\{\eta_k\}$ be a sequence of strictly positive scalars {for all $k\geq 0$}, and $x^*_{\eta_k} \in X$ denote the unique solution to the regularized variational inequality problem given by $\mathrm{VI}\left(X,F+\eta_k\nabla f\right)$. Then, the sequence $\left\{x^*_{\eta_k}\right\}$ is defined as the {\it Tikhonov trajectory} associated with the problem \cref{prob:main}. 
\end{definition}
\begin{remark}\label{rem:Tikh_traj}
	The uniqueness of the solution of $\text{VI}\left(X,F+\eta_k\nabla f\right)$ in \cref{def:Tikh_traj} is due to the strong monotonicity of the mapping $F+\eta_k\nabla f$ and closedness and convexity of the set $X$ (see Theorem 2.3.3 in \cite{FacchineiPang2003}). \cref{def:Tikh_traj} generalizes the notion of Tikhonov trajectory provided in~\cite{FacchineiPang2003} in the following way: in  \cite{FacchineiPang2003}, $x^*_{\eta_k}$ is defined as the solution to the regularized problem $\text{VI}\left(X,F+\eta_k\mathbf{I}_n\right)$. This is indeed the special case where we choose $f(x) := \frac{1}{2}\|x\|^2$ in \cref{def:Tikh_traj}. 
\end{remark}
To analyze the convergence, we utilize the properties of the Tikhonov trajectory. The following result ascertains the asymptotic convergence of {this} trajectory to the optimal solution of the problem \cref{prob:main}. It also provides an upper bound on the error between any two successive vectors of the trajectory. 
\begin{lemma}\label{Lemma 4.5}
	Consider \cref{def:Tikh_traj} and let \cref{assum:problem_unboundedX} hold. Let $\{\eta_k\}$ be {a} sequence such that $\lim_{k\to \infty}\eta_k=0$ and $\eta_k>0$ for all $k\geq 0$. Then: 
	
\noindent 	(a) The Tikhonov trajectory $\{ x^*_{\eta_{{k}}} \}$ converges to a unique limit point, that is $x^*$.
	
\noindent (b) There exists $\bar C_f>0$ such that $\left\|x^*_{\eta_{{k}}}-x^*_{\eta_{{k-1}}}\right\| \leq \frac{\bar C_f}{\mu_f} \left|1-\frac{\eta_{k-1}}{\eta_{{k}}}\right|$ for all $k\geq 1$.
\end{lemma}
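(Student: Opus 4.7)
My approach is to first establish that the Tikhonov trajectory is bounded and then identify its unique limit. I would test the defining VI at $x^*_{\eta_k}$ with $y=x^*$ and test the VI satisfied by $x^*\in \mathrm{SOL}(X,F)$ with $y=x^*_{\eta_k}$; adding the two and invoking the monotonicity of $F$ eliminates all $F$-terms and leaves $\eta_k\nabla f(x^*_{\eta_k})^T(x^*_{\eta_k}-x^*)\le 0$. Combining this (after dividing by $\eta_k>0$) with the $\mu_f$-strong-convexity inequality for $f$ at $(x^*_{\eta_k},x^*)$ produces
\[
f(x^*_{\eta_k}) + \tfrac{\mu_f}{2}\|x^*_{\eta_k}-x^*\|^2 \;\le\; f(x^*),
\]
which simultaneously bounds the trajectory and caps its objective value.

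\textbf{Passage to the limit for (a).} Along any convergent subsequence $x^*_{\eta_{k_j}}\to \bar x$, closedness of $X$ gives $\bar x\in X$, and passing to the limit in the VI for $x^*_{\eta_{k_j}}$—using continuity of $F$ and $\nabla f$, the assumption $\eta_{k_j}\to 0$, and the boundedness of $\{x^*_{\eta_k}\}$ established above—yields $\bar x\in \mathrm{SOL}(X,F)$. The displayed inequality then forces $f(\bar x)\le f(x^*)$, and since $x^*$ is the \emph{unique} optimal solution of \cref{prob:main} by \cref{rem:unique_x_star}, I conclude $\bar x=x^*$. The standard every-subsequence-has-a-subsubsequence argument then upgrades this subsequential convergence to convergence of the full sequence $\{x^*_{\eta_k}\}$ to $x^*$.

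\textbf{Plan for part (b).} For the discrete-gradient estimate, I would test the VI at $\eta_k$ with $y=x^*_{\eta_{k-1}}$ and the VI at $\eta_{k-1}$ with $y=x^*_{\eta_k}$, and add. The $F$-contributions collapse into the nonnegative monotonicity quantity $(F(x^*_{\eta_k})-F(x^*_{\eta_{k-1}}))^T(x^*_{\eta_k}-x^*_{\eta_{k-1}})$, which can be discarded with the correct sign. Using the splitting
\[
\eta_k\nabla f(x^*_{\eta_k})-\eta_{k-1}\nabla f(x^*_{\eta_{k-1}}) \;=\; \eta_k\bigl(\nabla f(x^*_{\eta_k})-\nabla f(x^*_{\eta_{k-1}})\bigr) + (\eta_k-\eta_{k-1})\nabla f(x^*_{\eta_{k-1}}),
\]
the first piece combined with strong convexity of $f$ yields the coercive lower bound $\eta_k\mu_f\|x^*_{\eta_k}-x^*_{\eta_{k-1}}\|^2$, while Cauchy–Schwarz on the second piece gives the upper bound $|\eta_k-\eta_{k-1}|\,\|\nabla f(x^*_{\eta_{k-1}})\|\,\|x^*_{\eta_k}-x^*_{\eta_{k-1}}\|$. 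Rearranging and dividing by $\eta_k\|x^*_{\eta_k}-x^*_{\eta_{k-1}}\|$ reproduces exactly the factor $|1-\eta_{k-1}/\eta_k|$ in the stated bound. The uniform constant $\bar C_f$ is then supplied by part (a): the trajectory converges and is therefore bounded, so continuity of $\nabla f$ gives $\sup_k\|\nabla f(x^*_{\eta_k})\|\le \bar C_f$.

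\textbf{Main obstacle.} The most delicate step is the sign/decomposition bookkeeping in part (b): one must choose the splitting of $\eta_k\nabla f(x^*_{\eta_k})-\eta_{k-1}\nabla f(x^*_{\eta_{k-1}})$ so that monotonicity of $F$ can be discarded and strong convexity of $f$ generates coercivity on the left-hand side rather than a contaminating error term. For part (a), the subtlety is legitimizing the passage to the limit in the VI, which relies on the boundedness of the trajectory established earlier in the same argument rather than being assumed a priori.
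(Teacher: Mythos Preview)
Your proposal is correct and follows essentially the same route as the paper's proof: in part (a) you combine the two VIs, use monotonicity of $F$ and strong convexity of $f$ to obtain $f(x^*_{\eta_k})+\tfrac{\mu_f}{2}\|x^*_{\eta_k}-x^*\|^2\le f(x^*)$, then pass to the limit along subsequences; in part (b) you add the two regularized VIs, drop the $F$-term via monotonicity, split $\eta_k\nabla f(x^*_{\eta_k})-\eta_{k-1}\nabla f(x^*_{\eta_{k-1}})$ exactly as the paper does, apply strong convexity and Cauchy--Schwarz, and bound $\|\nabla f(x^*_{\eta_{k-1}})\|$ via the boundedness from (a). The only cosmetic omission is the trivial case $x^*_{\eta_k}=x^*_{\eta_{k-1}}$ before dividing, which the paper disposes of in one line.
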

\begin{proof}
See \cref{app:conv_and_rate_tikh}.
\end{proof}
The following lemmas will be employed to establish the asymptotic convergence result.
\begin{lemma}[Theorem 6, page 75 in~\cite{Knopp_1951}]\label{lem:convergence_sum}
	Let $\{u_t\}\subset \mathbb{R}^n$ denote a sequence of vectors where $\lim_{t \to \infty}u_t=\hat{u}$. Also, let $\{\alpha_k\}$ denote a sequence of strictly positive scalars such that $\sum_{k=0}^{\infty} \alpha_k = \infty$. Suppose $v_k\in \mathbb{R}^n$ is defined by {$ v_k \triangleq  \frac{\sum_{t=0}^{k}\alpha_t u_t}{\sum_{t=0}^{k}\alpha_t}$ for all $k\geq 0$}. Then{,} $ \lim\limits_{k\rightarrow\infty}v_k = \hat{u}$.
\end{lemma}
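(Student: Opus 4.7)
The plan is to prove this classical weighted-average (Silverman–Toeplitz type) statement directly from the $\varepsilon$--definition of the limit. The first move is an algebraic identity: since $\sum_{t=0}^{k-1}\alpha_t>0$, we can write $\hat u = \frac{\sum_{t=0}^{k-1}\alpha_t\hat u}{\sum_{t=0}^{k-1}\alpha_t}$ and therefore
\begin{equation*}
v_k - \hat u \;=\; \frac{\sum_{t=0}^{k-1}\alpha_t(u_t-\hat u)}{\sum_{t=0}^{k-1}\alpha_t}.
\end{equation*}
Applying the triangle inequality coordinatewise (or on Euclidean norm) reduces the problem to bounding $\sum_{t=0}^{k-1}\alpha_t\|u_t-\hat u\|$ relative to $\sum_{t=0}^{k-1}\alpha_t$.

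Next, fix an arbitrary $\varepsilon>0$. Using $\lim_{t\to\infty}u_t=\hat u$, choose an index $T\ge 1$ so that $\|u_t-\hat u\|<\varepsilon/2$ for every $t\ge T$. Split the sum at $T$ and bound the two pieces separately:
\begin{equation*}
\|v_k-\hat u\|\;\le\;\frac{\sum_{t=0}^{T-1}\alpha_t\|u_t-\hat u\|}{\sum_{t=0}^{k-1}\alpha_t}\;+\;\frac{\sum_{t=T}^{k-1}\alpha_t\|u_t-\hat u\|}{\sum_{t=0}^{k-1}\alpha_t}.
\end{equation*}
The second quotient is at most $\tfrac{\varepsilon}{2}\cdot\tfrac{\sum_{t=T}^{k-1}\alpha_t}{\sum_{t=0}^{k-1}\alpha_t}\le\tfrac{\varepsilon}{2}$ for every $k>T$. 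For the first quotient, the numerator $C_T\triangleq\sum_{t=0}^{T-1}\alpha_t\|u_t-\hat u\|$ is a finite constant (depending only on $T$), while the hypothesis $\sum_{k=0}^\infty\alpha_k=\infty$ forces the denominator $\sum_{t=0}^{k-1}\alpha_t$ to diverge as $k\to\infty$. Hence one can choose $K\ge T$ with $C_T\big/\sum_{t=0}^{k-1}\alpha_t<\varepsilon/2$ for all $k\ge K$, whereby $\|v_k-\hat u\|<\varepsilon$ for all $k\ge K$.

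There is no genuine obstacle in this argument; the entire content lies in observing that the weighted averaging is a regular summability method under the stated hypotheses, so the influence of any fixed initial segment of $\{u_t\}$ is diluted to zero by the divergent tail of the weights. The only mild subtlety is ensuring the two error contributions are controlled by a \emph{single} threshold $K\ge T$, which is handled by the standard two-stage choice (first pick $T$ from convergence of $\{u_t\}$, then pick $K$ from divergence of $\sum\alpha_k$). Since the paper cites this as Theorem~6 of \cite{Knopp_1951}, no proof is actually needed in the paper, but the above is the shortest self-contained derivation.
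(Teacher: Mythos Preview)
Your proof is correct and is exactly the standard $\varepsilon$-splitting argument for regularity of weighted averages. As you yourself note, the paper does not supply a proof of this lemma at all; it simply cites it as Theorem~6 in \cite{Knopp_1951}, so there is no ``paper's own proof'' to compare against.
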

\begin{lemma}[Lemma 10, page 49 in~\cite{Polyak1987}] \label{lem:as_mean_conv_Polyak} 
	Let $\{v_k\}$ be a sequence of nonnegative random variables, where {$\EXP{v_0}<\infty$},  and let $\{\alpha_k\}$ and $\{\beta_k\}$ be deterministic scalar sequences such that $\EXP{v_{k+1}|v_0, \ldots, v_k} \leq (1-\alpha_k)v_k + \beta_k$ for all $ k \geq 0$, $0 \leq \alpha_k \leq 1$, $ \beta_k \geq 0$, $\sum_{k=0}^{\infty} \alpha_k = \infty$, $ \sum_{k=0}^{\infty} \beta_k < \infty${, and} $\lim_{k\to\infty}  \frac{\beta_k}{\alpha_k} = 0$. 	
	Then, $v_k \rightarrow 0$ almost surely and { $ \lim\limits_{k\rightarrow \infty} \EXP{v_k} = 0$}. 
\end{lemma}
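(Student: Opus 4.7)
My plan is to split the two conclusions and handle them with different tools. The almost-sure statement is tailor-made for the Robbins--Siegmund supermartingale convergence theorem, while the convergence in mean is essentially a deterministic $\epsilon$--$N$ argument driven by the quantitative hypothesis $\beta_k/\alpha_k \to 0$. Both parts will hinge on the restated recursion $\EXP{v_{k+1}\mid \mathcal{F}_k} \leq v_k - \alpha_k v_k + \beta_k$, where $\mathcal{F}_k \triangleq \sigma(v_0,\ldots,v_k)$.

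For the almost sure part, I would invoke Robbins--Siegmund with $\alpha_k v_k$ in the role of the nonnegative drift term and $\beta_k$ in the role of the summable perturbation. Since $\sum \beta_k < \infty$ deterministically (hence a.s.), the theorem would guarantee that $v_k$ converges a.s.\ to some nonnegative random variable $v_\infty$ and, crucially, that $\sum_{k=0}^{\infty} \alpha_k v_k < \infty$ a.s. Combined with $\sum \alpha_k = \infty$, this summability forces $\liminf_k v_k = 0$ on a set of full probability. Since $v_k$ already converges to $v_\infty$, we must then have $v_\infty = 0$ a.s.

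For the convergence in mean, I would first take expectations to get the purely deterministic inequality $\EXP{v_{k+1}} \leq (1-\alpha_k)\EXP{v_k} + \beta_k$. Given any $\epsilon > 0$, the hypothesis $\beta_k/\alpha_k \to 0$ furnishes $K$ with $\beta_k \leq \epsilon \alpha_k$ for all $k \geq K$, and subtracting $\epsilon$ from both sides yields $\EXP{v_{k+1}} - \epsilon \leq (1-\alpha_k)(\EXP{v_k} - \epsilon)$ for $k \geq K$. Iterating this contraction and using $\prod_{j=K}^{k}(1-\alpha_j) \to 0$ (a consequence of $\sum \alpha_k = \infty$ together with $\alpha_k \in [0,1]$), one obtains $\limsup_k \EXP{v_k} \leq \epsilon$; letting $\epsilon \to 0$ closes the argument.

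The main obstacle, were I to work from scratch without citing Robbins--Siegmund as a black box, would be the almost-sure half: establishing pointwise convergence of $v_k$ rather than mere tail boundedness requires constructing a nonnegative supermartingale from $v_k + \sum_{j\geq k}\beta_j$ and applying Doob's martingale convergence theorem. Granted Robbins--Siegmund, the only remaining subtlety is ensuring that ``$\sum \alpha_k v_k < \infty$ a.s.'' combined with ``$v_k \to v_\infty$ a.s.'' really forces $v_\infty = 0$ a.s.; this is handled by a routine contradiction argument on the event $\{v_\infty > 0\}$, on which $\alpha_k v_k \geq \tfrac{1}{2} v_\infty \alpha_k$ for all large $k$ and $\sum \alpha_k = \infty$ yields the contradiction.
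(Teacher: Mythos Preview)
The paper does not prove this lemma; it is stated as a quotation from Polyak's textbook (Lemma 10, page 49) and is used as a black box in the proof of \cref{thm:convergence_RB-IRG}. There is therefore nothing to compare against.

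Your proposed argument is sound on its own merits. The almost-sure half is a textbook application of Robbins--Siegmund: the recursion $\EXP{v_{k+1}\mid \mathcal{F}_k}\le v_k-\alpha_k v_k+\beta_k$ with $\sum\beta_k<\infty$ gives a.s.\ convergence of $v_k$ together with $\sum_k\alpha_k v_k<\infty$ a.s., and your contradiction step (on $\{v_\infty>0\}$ one would have $\alpha_k v_k\ge\tfrac12 v_\infty\alpha_k$ eventually, contradicting $\sum\alpha_k=\infty$) correctly forces $v_\infty=0$. For the mean part, your $\epsilon$--$N$ argument is correct; the only cosmetic gap is that the contraction $w_{k+1}\le(1-\alpha_k)w_k$ with $w_k=\EXP{v_k}-\epsilon$ should be iterated on the positive part $w_k^+\triangleq\max\{w_k,0\}$ (since when $w_k\le 0$ the bound only tells you $w_{k+1}\le 0$, not that it shrinks further). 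With that tweak, $w_k^+\le w_K^+\prod_{j=K}^{k-1}(1-\alpha_j)\to 0$ follows from $\sum\alpha_j=\infty$ and $0\le\alpha_j\le 1$ via $\log(1-\alpha_j)\le-\alpha_j$, and one concludes $\limsup_k\EXP{v_k}\le\epsilon$. You should also note, for completeness, that $\EXP{v_k}<\infty$ for every $k$ by induction from $\EXP{v_0}<\infty$ and the recursion, so that $w_K^+$ is finite.
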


\subsection{Convergence analysis}
As a key step toward performing the convergence analysis for \cref{alg:aRB-IRG} {when the set $X$ is unbounded}, next we derive a recursive inequality for the distance between the generated sequence $\{x_k\}$ by the algorithm and the Tikhonov trajectory $\{x^*_{\eta_k}\}$. To this end, we first make the following assumption: 
\begin{assumption}\label{assum:L_C}
Consider the problem \cref{prob:main} under the following assumptions:

\noindent (a) There {exist} nonnegative scalars $L_F$ {and} $B_F$ such that for all $x,y \in X$:
 $$\|F(x)-F(y)\|^2 \leq L_F^2\|x-y\|^2+B_F.$$

\noindent (b)  The gradient mapping $\nabla f$ is Lipschitz with parameter $L_f>0$.
\end{assumption}
\begin{remark}\label{rem:L_C}
By allowing $L_F$ {or} $B_F$ to {be zero}, \cref{assum:L_C} provides a unifying structure for considering both smooth and nonsmooth cases. In particular, when $L_F=0$, part (a) refers to a  bounded, but possibly non-Lipschitzian mapping $F$. Also, when $B_F=0$, part (a) refers to a Lipschitzian, but possibly unbounded mapping $F$. 
\end{remark}
The following recursive relation will play a key role in establishing the convergence.
\begin{lemma}[A recursive error bound for \cref{alg:aRB-IRG}]  \label{lem:recur_err_RBIRG}
Consider the sequence $\{x_k\}$ in \cref{alg:aRB-IRG}. Let \cref{assum:problem_unboundedX}, \cref{assum:random sample}, and \cref{assum:L_C} hold. Suppose $\{\gamma_k\}$ and $\{\eta_k\}$ are nonincreasing and strictly positive where $\lim_{k\to \infty}\eta_k=0$ and $\frac{\gamma_k}{\eta_k} \leq \frac{\mu_f\prob{{min}}}{2\prob{{max}}\left(L_F^2+\eta_0^2L_f^2\right)}$ for all $k\geq 0$. Then, for all $k\geq 1$:
		\begin{align}
	\EXP{\mathcal{D} \left(x_{k+1}, x^*_{\eta_k}\right)|\mathcal{F}_k}\leq & \frac{\prob{{max}}}{\prob{{min}}} \left( 1 -\frac{\prob{{min}} \mu_f \gamma_k \eta_k}{2}\right)\mathcal{D} \left(x_{k}, x^*_{\eta_{k-1}}\right)\notag \\ 
	&+ \frac{ \bar C_f^2\left(\mu_f \gamma_0 \eta_0+2/\prob{{min}}\right)}{ \mu_f^3\,\prob{{min}}\gamma_k \eta_k}  \left(\frac{\eta_{k-1}}{\eta_k} -1 \right)^2+ 2\gamma_k^2B_F.  \label{eqn:rec_err_ineq_in_lemma}
	\end{align}
\end{lemma}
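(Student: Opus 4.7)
The plan is to emulate the one-step analysis used in Lemma~3.1 (see the derivation of \cref{eqn:erc_D_elementary_ineq2}), but this time compare the iterates to the Tikhonov anchor $x^*_{\eta_k}$ instead of an arbitrary $y \in X$. The key observation is that because $X=\prod_i X_i$ is Cartesian, $\proj[X]{\cdot}$ decomposes block-wise; thus the fixed-point characterization of $x^*_{\eta_k}$ as the (unique) solution of $\mathrm{VI}(X,G_k)$ reads, block-by-block,
\[
{x^*_{\eta_k}}^{(i)} \;=\; \proj[X_i]{{x^*_{\eta_k}}^{(i)}-\gamma_k G_{k,i}(x^*_{\eta_k})} \quad\text{for all } i \in \{1,\ldots,d\}.
\]
Combining this identity with the aRB-IRG update \cref{equ:update_rule_aRBIRG} and the nonexpansiveness of $\proj[X_{i_k}]{\cdot}$ yields the clean bound
\[
\|x_{k+1}^{(i_k)}-{x^*_{\eta_k}}^{(i_k)}\|^2 \le \|x_k^{(i_k)}-{x^*_{\eta_k}}^{(i_k)} -\gamma_k\bigl(G_{k,i_k}(x_k)-G_{k,i_k}(x^*_{\eta_k})\bigr)\|^2.
\]

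Expanding the square and plugging the result into \cref{def:err_D} exactly as in the proof of \cref{lem:prebound_gap} gives
\[
\mathcal{D}(x_{k+1},x^*_{\eta_k}) \le \mathcal{D}(x_k,x^*_{\eta_k}) - 2\prob{i_k}^{-1}\gamma_k\bigl(x_k^{(i_k)}-{x^*_{\eta_k}}^{(i_k)}\bigr)^{\!T}\!\bigl(G_{k,i_k}(x_k)-G_{k,i_k}(x^*_{\eta_k})\bigr) + \prob{i_k}^{-1}\gamma_k^2\|G_{k,i_k}(x_k)-G_{k,i_k}(x^*_{\eta_k})\|^2.
\]
Taking conditional expectation with respect to $i_k$ given $\mathcal F_k$ (as in the reasoning that leads to \cref{eqn:estimate_deltas_3}), the $\prob{i_k}^{-1}$ weights cancel and the block selection reassembles into the full vectors, producing
\[
\EXP{\mathcal{D}(x_{k+1},x^*_{\eta_k})\mid\mathcal F_k} \le \mathcal{D}(x_k,x^*_{\eta_k}) - 2\gamma_k(x_k-x^*_{\eta_k})^T\bigl(G_k(x_k)-G_k(x^*_{\eta_k})\bigr) + \gamma_k^2\|G_k(x_k)-G_k(x^*_{\eta_k})\|^2.
\]

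Since $F$ is monotone and $\nabla f$ is $\mu_f$-strongly monotone (\cref{assum:problem_unboundedX}(c),(b)), the regularized map $G_k=F+\eta_k\nabla f$ is $\eta_k\mu_f$-strongly monotone; and by \cref{assum:L_C} together with $\eta_k\le\eta_0$, one has $\|G_k(x_k)-G_k(x^*_{\eta_k})\|^2 \le 2(L_F^2+\eta_0^2 L_f^2)\|x_k-x^*_{\eta_k}\|^2+2B_F$. Invoking the stepsize hypothesis $\gamma_k/\eta_k\le \mu_f\prob{min}/(2\prob{max}(L_F^2+\eta_0^2L_f^2))$ makes the $\gamma_k^2$-term absorb into the contractive $-2\gamma_k\eta_k\mu_f\|x_k-x^*_{\eta_k}\|^2$ (using $\prob{min}\le \prob{max}$), leaving, via $\|x_k-x^*_{\eta_k}\|^2\ge \prob{min}\mathcal D(x_k,x^*_{\eta_k})$ from \cref{rem:err_D},
\[
\EXP{\mathcal{D}(x_{k+1},x^*_{\eta_k})\mid\mathcal F_k} \;\le\; (1-\prob{min}\mu_f\gamma_k\eta_k)\,\mathcal{D}(x_k,x^*_{\eta_k}) + 2\gamma_k^2 B_F.
\]

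The last step is the change of anchor from $x^*_{\eta_k}$ to $x^*_{\eta_{k-1}}$. Here I would apply the Young-type inequality $\|a+b\|^2\le(1+\alpha)\|a\|^2+(1+\alpha^{-1})\|b\|^2$ block-by-block with the decomposition $x_k-x^*_{\eta_k}=(x_k-x^*_{\eta_{k-1}})+(x^*_{\eta_{k-1}}-x^*_{\eta_k})$, together with the two sides $\prob{min}\mathcal D\le\|\cdot\|^2\le\prob{max}\mathcal D$ of \cref{rem:err_D}, to get
\[
\mathcal{D}(x_k,x^*_{\eta_k}) \;\le\; \tfrac{\prob{max}}{\prob{min}}(1+\alpha)\,\mathcal{D}(x_k,x^*_{\eta_{k-1}}) + \prob{min}^{-1}(1+\alpha^{-1})\,\|x^*_{\eta_{k-1}}-x^*_{\eta_k}\|^2.
\]
Choosing the mixing parameter $\alpha:=\prob{min}\mu_f\gamma_k\eta_k/2$ gives $(1-\prob{min}\mu_f\gamma_k\eta_k)(1+\alpha)\le 1-\prob{min}\mu_f\gamma_k\eta_k/2$ (a direct expansion), producing the desired contraction coefficient $\frac{\prob{max}}{\prob{min}}(1-\prob{min}\mu_f\gamma_k\eta_k/2)$ on $\mathcal D(x_k,x^*_{\eta_{k-1}})$. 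The residual term carries the factor $\prob{min}^{-1}(1+\alpha^{-1})\le (2+\prob{min}\mu_f\gamma_k\eta_k)/(\prob{min}^2\mu_f\gamma_k\eta_k)$; invoking \cref{Lemma 4.5}(b) to replace $\|x^*_{\eta_{k-1}}-x^*_{\eta_k}\|^2$ by $(\bar C_f/\mu_f)^2(\eta_{k-1}/\eta_k-1)^2$ and using $\gamma_k\le\gamma_0$, $\eta_k\le\eta_0$ to loosen $\prob{min}\mu_f\gamma_k\eta_k\le\mu_f\gamma_0\eta_0$ in the numerator yields exactly the constant $\bar C_f^2(\mu_f\gamma_0\eta_0+2/\prob{min})/(\mu_f^3\prob{min}\gamma_k\eta_k)$ in \cref{eqn:rec_err_ineq_in_lemma}. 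The main obstacle is the bookkeeping in this final step: the matching of the precise coefficient $(\mu_f\gamma_0\eta_0+2/\prob{min})$ requires the slight slackening just described, and selecting $\alpha$ to balance the contraction against the trajectory-drift penalty is where the particular stepsize condition $\gamma_k/\eta_k=\mathcal O(\prob{min}/\prob{max})$ is effectively used.
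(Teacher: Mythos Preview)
Your proposal is correct and follows essentially the same route as the paper: the block-wise fixed-point characterization of $x^*_{\eta_k}$, the one-step $\mathcal{D}$-contraction after conditional expectation and absorption via the stepsize hypothesis, and the anchor shift via the Young inequality with $\alpha=\prob{min}\mu_f\gamma_k\eta_k/2$ all match the paper's argument step for step. The only slip is cosmetic: to obtain the stated constant \emph{exactly} you should loosen only to $\prob{min}\mu_f\gamma_k\eta_k\le \prob{min}\mu_f\gamma_0\eta_0$ (retaining the $\prob{min}$) and then factor one $\prob{min}$ out of numerator and denominator, rather than dropping $\prob{min}$ as you do --- your version yields a slightly larger but still valid bound.
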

\begin{proof}
 From \cref{def:err_D}, we have:
	\begin{align}\label{eqn:D_split_1}
	\mathcal{D} \left(x_{k+1}, x^*_{\eta_k}\right)  =   \prob{{i_k}}^{-1} \left\Vert\left.  x_{k+1}^{(i_k)}-  x_{\eta_{k}}^{*^{(i_k)}} \right\Vert\right.^2+\sum_{i=1, \thinspace	 i \neq i_k}^{d} \prob{i}^{-1} \left\Vert\left.  x_{k}^{(i)}-  x_{\eta_{k}}^{*^{(i)}} \right\Vert\right.^2.
	\end{align}
	Next, we find a bound on the term $\left\Vert\left.  x_{k+1}^{(i_k)}-  x_{\eta_{k}}^{*^{(i_k)}} \right\Vert\right.^2$. From the properties of the natural map ({cf.} Proposition 1.5.8 in \cite{FacchineiPang2003}), \cref{def:reg_map}, and that $x^*_{\eta_k} \in X$, we have $
	x^*_{\eta_k} = \mathcal{P}_X\left(x^*_{\eta_k}-\gamma_kG_k\left(x^*_{\eta_k}\right)\right)$. From \cref{assum:problem_unboundedX}(a) and that $x^*_{\eta_k} \in \text{SOL}\left(X,G_k\right) \subseteq X$, we have $x^{*^{(i_k)}}_{\eta_k} \in X_{i_k}$. Invoking the nonexpansiveness property of the projection mapping, \cref{equ:update_rule_aRBIRG},  and the preceding relation, we obtain:
		\begin{align*} 
	\left\Vert\left. x_{k+1}^{(i_k)}-  x_{\eta_{k}}^{*^{(i_k)}}\right\Vert\right.^2 \leq\left\Vert\left.  x_{k}^{(i_k)}-\gamma_{k}G_{k,i_k} \left( x_{k}\right) -x_{\eta_k}^{*^{(i_k)}}+\gamma_kG_{k,i_k}\left(x^*_{\eta_k}\right)  \right\Vert\right.^2.
	\end{align*}
Combining the preceding relation with \cref{eqn:D_split_1}, we obtain:
	\begin{align*}
	 	\mathcal{D}\left(x_{k+1}, x^*_{\eta_k}\right)& \leq 
	 	 \sum_{i=1, \thinspace	 i \neq i_k}^{d} \prob{i}^{-1} \left\Vert\left.  x_{k}^{(i)}-  x_{\eta_{k}}^{*^{(i)}} \right\Vert\right.^2 +  \prob{{i_k}}^{-1}\left\Vert\left.  x_{k}^{(i_k)}-  x_{\eta_{k}}^{*^{(i_k)}} \right\Vert\right.^2 \nonumber \\
& - 2\,\prob{{i_k}}^{-1}\gamma_k\left( x^{(i_k)}_{k} - x^{*^{(i_k)}}_{\eta_k}\right)^T\left(G_{k,i_k}\left(x_k\right)-G_{k,i_k}\left(x^*_{\eta_k}\right)\right)\nonumber \\ &+ \prob{{i_k}}^{-1}\gamma_k^2 \left\Vert\left.G_{k,i_k}\left(x_k\right)-G_{k,i_k}\left(x^*_{\eta_k}\right)\right\Vert\right.^2.
	\end{align*}
Invoking \cref{def:err_D}, from the preceding relation we obtain:	
		\begin{align*}
	 	\mathcal{D}\left(x_{k+1}, x^*_{\eta_k}\right) & \leq  \mathcal{D}\left(x_{k}, x^*_{\eta_k}\right) - 2\,\prob{{i_k}}^{-1}\gamma_k\left( x^{(i_k)}_{k} - x^{*^{(i_k)}}_{\eta_k}\right)^T\left(G_{k,i_k}\left(x_k\right)-G_{k,i_k}\left(x^*_{\eta_k}\right)\right)\nonumber \\ &+ \prob{{i_k}}^{-1}\gamma_k^2 \left\Vert\left.G_{k,i_k}\left(x_k\right)-G_{k,i_k}\left(x^*_{\eta_k}\right)\right\Vert\right.^2.
	\end{align*}
	Taking the conditional expectation from the both sides of preceding relation and noting that $\mathcal{D}\left(x_k,x^*_{\eta_{{k}}}\right)$ is $\mathcal{F}_k$--measurable, we obtain the following inequality:
			\begin{align}\label{eqn:D_split_2}
	\EXP{\mathcal{D} \left(x_{k+1}, x^*_{\eta_k}\right)|\mathcal{F}_k} & \leq  \mathcal{D}\left(x_{k}, x^*_{\eta_k}\right) + \gamma_k^2\EXP{\prob{{i_k}}^{-1} \left\Vert\left.G_{k,i_k}\left(x_k\right)-G_{k,i_k}\left(x^*_{\eta_k}\right)\right\Vert\right.^2} \notag\\ 
	&- 2\gamma_k\EXP{\prob{{i_k}}^{-1}\left( x^{(i_k)}_{k} - x^{*^{(i_k)}}_{\eta_k}\right)^T\left(G_{k,i_k}\left(x_k\right)-G_{k,i_k}\left(x^*_{\eta_k}\right)\right)}.
	\end{align}
	Next, we estimate the second and third expectations in the preceding relation:
				\begin{align}\label{eqn:D_split_3}
	&\EXP{\prob{{i_k}}^{-1}\left( x^{(i_k)}_{k} - x^{*^{(i_k)}}_{\eta_k}\right)^T\left(G_{k,i_k}\left(x_k\right)-G_{k,i_k}\left(x^*_{\eta_k}\right)\right)}\notag\\
	&=\sum_{i=1}^d\prob{{i}}\prob{{i}}^{-1}\left( x^{(i)}_{k} - x^{*^{(i)}}_{\eta_k}\right)^T\left(G_{k,i}\left(x_k\right)-G_{k,i}\left(x^*_{\eta_k}\right)\right)\notag \\
	& =\left(x_k-x^*_{\eta_k}\right)^T\left(G_k(x_k)-G_k\left(x^*_{\eta_k}\right)\right).
	\end{align}
We can also write:
\begin{align}\label{eqn:D_split_4}
	&\EXP{\prob{{i_k}}^{-1} \left\Vert\left.G_{k,i_k}\left(x_k\right)-G_{k,i_k}\left(x^*_{\eta_k}\right)\right\Vert\right.^2}\notag\\
	&=\sum_{i=1}^d\prob{{i}}\prob{{i}}^{-1}\left\Vert\left.G_{k,i}\left(x_k\right)-G_{k,i}\left(x^*_{\eta_k}\right)\right\Vert\right.^2=
	\left\Vert\left.G_{k}\left(x_k\right)-G_{k}\left(x^*_{\eta_k}\right)\right\Vert\right.^2.
	\end{align}
From \cref{assum:L_C}, taking into account that $G_k$ is $(\eta_k\mu_f)$--strongly monotone, and combining \cref{eqn:D_split_2}, \cref{eqn:D_split_3}, and \cref{eqn:D_split_4} we obtain:
	\begin{align*}
	\EXP{\mathcal{D} \left(x_{k+1}, x^*_{\eta_k}\right)|\mathcal{F}_k} & \leq  \mathcal{D}\left(x_{k}, x^*_{\eta_k}\right) - 2\mu_f\gamma_k\eta_k\left\|x_k-x^*_{\eta_k}\right\|^2\notag\\ 
	&+ 2\gamma_k^2\left(\left(L_F^2+\eta_k^2L_f^2\right)\left\|x_k-x^*_{\eta_k}\right\|^2+B_F\right).
	\end{align*}
		From \cref{rem:err_D} {and that $\{\eta_k\}$ is a nonincreasing sequence}, we obtain:
		\begin{align*}
	\EXP{\mathcal{D} \left(x_{k+1}, x^*_{\eta_k}\right)|\mathcal{F}_k} & \leq  \left(1- 2\mu_f\gamma_k\eta_k\prob{{min}}+2\gamma_k^2\prob{{max}}\left(L_F^2+{\eta_0}^2L_f^2\right)\right)\mathcal{D}\left(x_{k}, x^*_{\eta_k}\right)\notag\\
	&+ 2\gamma_k^2B_F.
	\end{align*}
	From the assumption $\gamma_k \leq \frac{\mu_f\eta_k\prob{{min}}}{2\prob{{max}}\left(L_F^2+\eta_0^2L_f^2\right)}$ and the preceding inequality, we obtain:
		\begin{align}\label{eqn:D_split_5}
	\EXP{\mathcal{D} \left(x_{k+1}, x^*_{\eta_k}\right)|\mathcal{F}_k} & \leq  \left(1- \mu_f\gamma_k\eta_k\prob{{min}}\right)\mathcal{D}\left(x_{k}, x^*_{\eta_k}\right)+ 2\gamma_k^2B_F.
	\end{align}
The preceding relation is not yet fully recursive as the term $x^*_{\eta_k}$ {on the right-hand side} must change to $x^*_{\eta_{k-1}}$. Next, we find an upper bound for $\mathcal{D}\left(x_{k}, x^*_{\eta_k}\right)$ in terms of $\mathcal{D}\left(x_{k}, x^*_{\eta_{k-1}}\right)$. Note that we have $\|u+v\|^2\leq (1+\theta)\|u\|^2+\left(1+\frac{1}{\theta}\right)\|v\|^2$ for any vectors $u,v \in \mathbb{R}^n$ and $\theta>0$. Utilizing this inequality, by setting {$u:= x_k - x^*_{\eta_{k-1}} $, $v:=x^*_{\eta_{k-1}} - x^*_{\eta_k}$}, and $\theta:= \frac{\prob{{min}}\mu_f \gamma_k \eta_k}{2}$ we obtain:
	\begin{align*}
	\left\Vert\left. x_k - x^*_{\eta_k} \right\Vert\right.^2 &\leq \left( 1 + \frac{\prob{{min}}\mu_f \gamma_k \eta_k}{2} \right) \left\Vert\left. x_k - x^*_{\eta_{k-1}} \right\Vert\right.^2\\ & + \left( 1 + \frac{2}{\prob{{min}}\mu_f \gamma_k \eta_k}  \right) \left\Vert\left. x^*_{\eta_{k-1}} - x^*_{\eta_k} \right\Vert\right.^2 .
	\end{align*}
Together with \cref{Lemma 4.5}(b) and \cref{rem:err_D}, we have:
\begin{align*}
	\prob{{min}} \,\mathcal{D} \left(x_{k}, x^*_{\eta_k}\right) &\leq \left( 1 + \frac{\prob{{min}} \mu_f \gamma_k \eta_k}{2} \right)   
	\prob{{max}} \, \mathcal{D} \left(x_{k}, x^*_{\eta_{k-1}}\right)\\ &+ \left( 1 + \frac{2}{\prob{{min}}\mu_f \gamma_k \eta_k}  \right) \frac{\bar C_f^2}{\mu_f^2}\left(1- \frac{\eta_{k-1}}{\eta_k} \right)^2.
	\end{align*}
	Dividing both sides by $\prob{{min}}$
	and substituting this in \cref{eqn:D_split_5}, we obtain:
	\begin{align*}
	 \EXP{\mathcal{D} \left(x_{k+1}, x^*_{\eta_k}\right)|\mathcal{F}_k}&\leq \frac{\prob{{max}}}{\prob{{min}}} \left(1-\gamma_k \eta_k \mu_f \prob{{min}} \right) \left( 1 +\frac{\prob{{min}} \mu_f \gamma_k \eta_k}{2}\right)\mathcal{D} \left(x_{k}, x^*_{\eta_{k-1}}\right) \\ 
	&+\frac{\bar C_f^2}{\mu_f^2\,\prob{{min}}}\left( 1 + \frac{2}{\prob{{min}} \mu_f \gamma_k \eta_k}  \right) \left(1- \frac{\eta_{k-1}}{\eta_k}  \right)^2+ 2\gamma_k^2B_F.  
	\end{align*}
\cref{eqn:rec_err_ineq_in_lemma} is obtained by noting that 
$\left(1-\gamma_k \eta_k \mu_f \prob{{min}} \right) \left( 1 +\frac{\prob{{min}} \mu_f \gamma_k \eta_k}{2}\right) \leq 1 -\frac{\prob{{min}} \mu_f \gamma_k \eta_k}{2}$.
\end{proof}

In the following result, we provide a class of update rules for the stepsize and the regularization sequences such that \cref{alg:aRB-IRG} attains both an almost sure convergence and a convergence in the mean sense. 
\begin{theorem}[Convergence of \cref{alg:aRB-IRG} when $X$ is unbounded]\label{thm:convergence_RB-IRG}\label{thm:conv_RB-IG} Consider \cref{prob:main}. Let the sequence {$\{\bar x_k\}$} be generated by \cref{alg:aRB-IRG}. Let \cref{assum:problem_unboundedX}, \cref{assum:random sample}, and \cref{assum:L_C} hold. Suppose the random block-coordinate $i_k$ in \cref{assum:random sample} is drawn from a uniform distribution {for all $k \geq 0$}. Let the stepsize $\{\gamma_k\}$ and the regularization parameter $\{ \eta_k\}$ be given by $\gamma_k := \gamma_0(k+1)^{-a}$ and $\eta_k := \eta_0(k+1)^{-b}$, respectively, where {$\gamma_0>0$, $\eta_0>0$,} $0< b <0.5< a$, and $a+b < 1$. Then, the following results hold {for all $0\leq r<1$}:

\noindent (i) The sequence $\{\bar x_k\}$ converges almost surely to the unique optimal solution of \cref{prob:main}.

\noindent (ii) We have that $\lim_{k\to \infty}\EXP{\|\bar x_k-x^*\|}= 0$. 
\end{theorem}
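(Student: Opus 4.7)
The plan is to use Lemma 4.9 as the engine, viewing it as a standard supermartingale-type recursion, and then stitch together Polyak's lemma, Lemma 4.5 on the Tikhonov trajectory, and Knopp's lemma to get the averaged iterate to $x^*$.

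\textbf{Setup of the recursion.} Let $v_k := \mathcal{D}(x_k, x^*_{\eta_{k-1}})$ for $k\geq 1$. Under the uniform block-sampling assumption, $\mathsf{p}_{\min}=\mathsf{p}_{\max}=1/d$ and the prefactor $\mathsf{p}_{\max}/\mathsf{p}_{\min}$ in Lemma 4.9 collapses to $1$. Choosing $\gamma_0/\eta_0$ small enough to enforce the stepsize requirement $\gamma_k/\eta_k \leq \frac{\mu_f}{2(L_F^2+\eta_0^2 L_f^2)}$ uniformly in $k$ (feasible since $\gamma_k/\eta_k \propto (k+1)^{b-a}$ is nonincreasing when $a>b$), Lemma 4.9 specializes to $\EXP{v_{k+1}\mid \mathcal{F}_k}\leq (1-\alpha_k)\,v_k + \beta_k$ with $\alpha_k := \tfrac{\mu_f\gamma_k\eta_k}{2d}$ and $\beta_k$ a positive constant multiple of $\frac{1}{\gamma_k\eta_k}\!\left(\tfrac{\eta_{k-1}}{\eta_k}-1\right)^2 + \gamma_k^2$. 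This is exactly the form required by Lemma 4.7.

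\textbf{Verification of the Polyak conditions.} Substituting the prescribed rates and using that $\eta_{k-1}/\eta_k - 1 = (1+1/k)^b - 1 = O(1/k)$, the key exponents are:
\begin{align*}
\alpha_k &\asymp (k+1)^{-(a+b)}, \qquad \gamma_k^2 \asymp (k+1)^{-2a}, \qquad \tfrac{(\eta_{k-1}/\eta_k-1)^2}{\gamma_k\eta_k} \asymp (k+1)^{a+b-2}.
\end{align*}
The three hypotheses of Lemma 4.7 then reduce to arithmetic on these exponents: (i) $\sum\alpha_k=\infty$ follows from $a+b<1$; (ii) $\sum\beta_k<\infty$ follows from $2a>1$ (which holds since $a>0.5$) together with $a+b-2<-1$ (i.e.\ $a+b<1$); (iii) $\beta_k/\alpha_k\to 0$ reduces to $(k+1)^{b-a}\to 0$ and $(k+1)^{2(a+b)-2}\to 0$, both guaranteed by $a>b$ and $a+b<1$. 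Thus Lemma 4.7 yields $v_k\to 0$ almost surely and $\EXP{v_k}\to 0$.

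\textbf{From $v_k$ to $x_k\to x^*$.} By Remark 2.7, $\|x_k - x^*_{\eta_{k-1}}\|^2 \leq \mathsf{p}_{\max} v_k$, so $v_k\to 0$ a.s.\ gives $\|x_k-x^*_{\eta_{k-1}}\|\to 0$ a.s. Since Lemma 4.5(a) provides the deterministic fact $x^*_{\eta_{k-1}}\to x^*$, the triangle inequality yields $x_k\to x^*$ almost surely. For the mean statement, Jensen's inequality gives $\EXP{\|x_k-x^*_{\eta_{k-1}}\|}\leq \sqrt{\mathsf{p}_{\max}\,\EXP{v_k}}\to 0$, and combining with $\|x^*_{\eta_{k-1}}-x^*\|\to 0$ gives $\EXP{\|x_k-x^*\|}\to 0$.

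\textbf{Passage to the averaged iterate.} By Lemma 2.10, $\bar x_N = \sum_{k=0}^N \lambda_{k,N} x_k$ with $\lambda_{k,N} = \gamma_k^r/\sum_{j=0}^N \gamma_j^r$. Since $a<1$ (forced by $a+b<1$ and $b>0$) and $r<1$, we have $ar<1$ and hence $\sum_k \gamma_k^r=\infty$. For a.s.\ convergence, apply Lemma 4.6 pathwise to the (a.s.) convergent sequence $\{x_k\}$ with weights $\alpha_k=\gamma_k^r$ to conclude $\bar x_N\to x^*$ almost surely. For mean convergence, use $\EXP{\|\bar x_N - x^*\|}\leq \sum_{k=0}^N \lambda_{k,N}\,\EXP{\|x_k - x^*\|}$ and apply Lemma 4.6 to the deterministic sequence $\EXP{\|x_k-x^*\|}\to 0$.

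\textbf{Main obstacle.} The bulk of the work is the exponent bookkeeping in the second paragraph, particularly verifying $\beta_k/\alpha_k\to 0$: $\beta_k$ is a sum of terms with different decay rates, and each term imposes its own constraint relating $a$ and $b$. The delicate point is that the regularization-drift term $(\eta_{k-1}/\eta_k-1)^2/(\gamma_k\eta_k)$ already decays slowly and, once divided by $\alpha_k\propto\gamma_k\eta_k$, needs the full strength of $a+b<1$ to vanish. The remaining ingredients (Lemma 4.5, Lemmas 4.6--4.7, Remark 2.7, Lemma 2.10) plug in cleanly once the recursion is in Polyak form.
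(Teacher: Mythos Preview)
Your approach is essentially identical to the paper's: set up the recursion of Lemma~4.9, verify the Polyak conditions of Lemma~4.7, deduce $x_k\to x^*$ via Lemma~4.5 and the triangle inequality, then pass to $\bar x_k$ via Lemma~2.10 and Knopp's Lemma~4.6. The exponent bookkeeping is correct.

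There is one small but genuine gap. You write ``Choosing $\gamma_0/\eta_0$ small enough to enforce the stepsize requirement \ldots uniformly in $k$,'' but the theorem statement fixes $\gamma_0,\eta_0>0$ as arbitrary inputs to the algorithm; you are not free to shrink them in the proof. The paper handles this the right way: since $\gamma_k/\eta_k=(\gamma_0/\eta_0)(k+1)^{b-a}\to 0$ (because $a>b$), there exists $k_0$ such that the hypothesis of Lemma~4.9 holds for all $k\geq k_0$, and similarly a $k_1\geq k_0$ such that $\alpha_k\leq 1$. One then applies Lemma~4.7 to the tail $\{v_k\}_{k\geq k_1}$, which is harmless since the Polyak conditions are tail conditions. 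With this correction your argument goes through verbatim.
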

\begin{proof}
The proof is done in two main steps. In the first step, we show that the non-averaged sequence $\{x_k\}$ converges to $x^*$ in an almost sure sense and that $\lim_{k\to \infty}\EXP{\|  x_k-x^*\|}= 0$. In the second step, we show that these results hold for the weighted average sequence $\{\bar x_k\}$ as well.

\noindent \textit{Step 1:} The proof of this step is done by applying \cref{lem:as_mean_conv_Polyak} to the recursive inequality \cref{eqn:rec_err_ineq_in_lemma} with $\prob{i}:=\frac{1}{d}$ for all $i \in \{1,\ldots,d\}$. The details are as follows. First, we note that from the update rules of $\gamma_k$ and $\eta_k$ and that $a>b$, we have $\lim_{k \to \infty}\frac{\gamma_k}{\eta_k} =0$. Thus, there {exists} an integer $k_0\geq 1$ such that for all $k\geq k_0$, we have $\frac{\gamma_k}{\eta_k} \leq \frac{\mu_f\prob{{min}}}{2\prob{{max}}\left(L_F^2+\eta_0^2L_f^2\right)}$. This implies that the conditions of \cref{lem:recur_err_RBIRG} are satisfied and the inequality \cref{eqn:rec_err_ineq_in_lemma} holds for all $k \geq k_0$. To apply \cref{lem:as_mean_conv_Polyak}, we define the following terms for all {$k\geq 1$}:
		\begin{align*}
		&v_k \triangleq  \mathcal{D} \left(x_{k}, x^*_{\eta_{k-1}}\right), \quad \alpha_k \triangleq \frac{\mu_f\gamma_k\eta_k}{2d},\\&\beta_k \triangleq   \left( \frac{d\bar C_f^2\left(\mu_f\eta_0\gamma_0+2d\right)}{ \mu_f^3 \gamma_k \eta_k}  \right) \left( \frac{\eta_{k-1}}{\eta_k} - 1 \right)^2 + 2\gamma^2_kB_F. 
		\end{align*}
Since $\gamma_k\eta_k \to 0$, there {exists} an integer $k_1\geq k_0$ such that for any $k \geq k_1$ we have $0\leq \alpha_k \leq 1$. From the assumption that $a+b<1$, we have that $\sum_{k = k_1}^\infty\alpha_k = \infty$. Next, we show that $\sum_{k=k_1}^\infty\beta_k < \infty$. From the update rules of $\gamma_k \text{ and } \eta_k$ and invoking the Taylor series expansion, for $k \geq 2$ we can write:
		\begin{align*}
&\frac{\eta_{k-1}}{\eta_k} - 1  = \left( 1+\frac{1}{k} \right)^b -1= \left( 1+ \frac{b}{k} + \frac{b(b-1)}{2!} \frac{1}{k^2} + \frac{b(b-1)(b-2)}{3!} \frac{1}{k^3} + ... \right)-1\\
& =\frac{b}{k}\left(1-\frac{(1-b)}{2!k}+\frac{(1-b)(2-b)}{3!k^2}-\frac{(1-b)(2-b)(3-b)}{4!k^3}+...\right)\leq \frac{b}{k}\sum_{i=0}^\infty\frac{1}{k^{2i}},
\end{align*}
where the inequality is obtained using $b<1$ and neglecting the negative terms. This implies that $\frac{\eta_{k-1}}{\eta_k} - 1\leq \frac{b}{k(1-k^{-2})}$ and thus $\left(\frac{\eta_{k-1}}{\eta_k} - 1\right)^2\leq \left(\frac{4b}{3k}\right)^2 \leq\frac{2b^2}{k^2}$ for all $k\geq 2$. {Using the preceding relation, invoking} the definition of $\beta_k${,} and the update formulas of $\gamma_k$ and $\eta_k$, we have that $\beta_k =\mathcal{O}\left(k^{-(2-a-b)}\right)+\mathcal{O}\left(k^{-2a}\right)$. From the assumptions on $a$ and $b$, we obtain that $\sum_{k=k_1}^\infty\beta_k < \infty$. Also, from the assumption $a>b$, we get $\lim_{k \to \infty} \beta_k/\alpha_k = 0$. Therefore, all conditions of \cref{lem:as_mean_conv_Polyak} are satisfied. As such, we have that $\mathcal{D} \left(x_{k}, x^*_{\eta_{k-1}}\right) {\to}\ 0$ almost surely and also $\lim_{k \to \infty}\EXP{\mathcal{D} \left(x_{k}, x^*_{\eta_{k-1}}\right)}=0$. From \cref{rem:err_D} and that $i_k$ is drawn uniformly, we obtain: 
\begin{align}\label{eqn:a_s_mean_conv_last_ineq}
\left\|x_{k}- x^*\right\|^2 
&\leq 2\left\|x_{k}- x^*_{\eta_{k-1}}\right\|^2+2\left\|x^*_{\eta_{k-1}}-x^*\right\|^2\nonumber \\ &
=\frac{2}{d}\mathcal{D} \left(x_{k}, x^*_{\eta_{k-1}}\right)+2\left\|x^*_{\eta_{k-1}}-x^*\right\|^2,
\end{align}
where the first inequality is obtained from the triangle inequality. Taking the limit from both sides of the preceding relation when $k \to \infty$ and invoking \cref{Lemma 4.5}(a), we obtain $\lim_{k \to \infty}\left\|x_{k}- x^*\right\|^2 \leq \frac{2}{d}\lim_{k \to \infty}\mathcal{D} \left(x_{k}, x^*_{\eta_{k-1}}\right)$. From the almost sure convergence of $\mathcal{D} \left(x_{k}, x^*_{\eta_{k-1}}\right)$ to zero, we conclude that {$\{x_k\}$} converges to $x^*$ almost surely. To show the convergence in mean, let us take the expectation from both sides of \cref{eqn:a_s_mean_conv_last_ineq}. Noting that the Tikhonov trajectory is deterministic, we obtain that $\EXP{\left\|x_{k}- x^*\right\|^2} \leq \frac{2}{d}\EXP{\mathcal{D} \left(x_{k}, x^*_{\eta_{k-1}}\right)}+2\left\|x^*_{\eta_{k-1}}-x^*\right\|^2$. Now, taking the limit from both sides of the preceding relation when $k \to \infty$, invoking \cref{Lemma 4.5}(a), and recalling $\lim_{k \to \infty}\EXP{\mathcal{D} \left(x_{k}, x^*_{\eta_{k-1}}\right)}=0$, we conclude that $\lim_{k \to \infty}\EXP{\left\|x_{k}- x^*\right\|^2}=0$. Invoking Jensen's inequality, we can conclude that $\lim_{k \to \infty}\EXP{\left\|x_{k}- x^*\right\|}=0$. 

\noindent \textit{Step 2:} Invoking \cref{Lemma 2.10} and using the triangle inequality, we have:
\begin{align}\label{eqn:xbar_ineq_proof}
\left\|\bar x_k-x^*\right\| = \left\|\sum_{t=0}^k \lambda_{t,k}x_t-x^*\right\| =\left\|\sum_{t=0}^k \lambda_{t,k}\left(x_t-x^*\right)\right\|\leq 
\sum_{t=0}^k\lambda_{t,k}\left\| x_t-x^*\right\|,
\end{align}
{where $\lambda_{t,k}\triangleq \gamma_t^r/\sum_{j=0}^k\gamma_j^r$.} In view of \cref{lem:convergence_sum}, let us define $u_t\triangleq \left\| x_t-x^*\right\|$,  ${v_k}\triangleq \sum_{t=0}^k\lambda_{t,k}\left\| x_t-x^*\right\|$, and $\alpha_t \triangleq \gamma_t^r$. {Note that} since $ar \leq 1$, we have $\sum_{t=0}^\infty\alpha_t ={\gamma_0^r} \sum_{t=0}^\infty(t+1)^{-ar} =\infty$. {Also,} from Step 1 we have that $\hat u \triangleq \lim_{t\to \infty}u_t =0$ in an almost sure sense. Thus, from \cref{lem:convergence_sum}, we conclude that {$\{v_k\}$} converges to zero almost surely. Thus, \cref{eqn:xbar_ineq_proof} implies that $\{\bar x_k\}$ converges to $x^*$ almost surely. Next, we apply \cref{lem:convergence_sum} {again, but} in a {slightly} different fashion to show that $\lim_{k\to \infty}\EXP{\|\bar x_k-x^*\|}= 0$. From \cref{eqn:xbar_ineq_proof}{,} we have:
\begin{align}\label{eqn:xbar_ineq2_proof}
\EXP{\left\|\bar x_k-x^*\right\|}{\leq} \sum_{t=0}^k\lambda_{t,k}\EXP{\left\| x_t-x^*\right\|}.
\end{align}
In view of \cref{lem:convergence_sum}, let us define $u_t\triangleq \EXP{\left\| x_t-x^*\right\|}$,  ${v_k}\triangleq \sum_{t=0}^k\lambda_{t,k}\EXP{\left\| x_t-x^*\right\|}$, and $\alpha_t \triangleq \gamma_t^r$. First, {note that from Step 1, we have $\hat u \triangleq \lim_{t \to \infty}u_t= 0$}. In view of 
 \cref{lem:convergence_sum}, {$\lim_{k \to \infty} v_k=0$}. Thus, from \cref{eqn:xbar_ineq2_proof}, we conclude that {$\lim_{k \to \infty}\EXP{\left\|\bar x_k-x^*\right\|}=0$}. Hence, the proof is completed.
\end{proof}
%{\begin{remark}Importantly, the convergence of \cref{alg:aRB-IRG} in both almost sure and mean senses is established under \cref{assum:L_C}. Consequently, \cref{thm:convergence_RB-IRG} provides convergence guarantees for problem \cref{prob:main} in both smooth and nonsmooth regimes (cf. \cref{rem:L_C} for more details). \end{remark}}

\section{Experimental results}\label{sec:experiments}
In this section, we revisit the problem of finding the best Nash equilibrium formulated as in \cref{prob:best_NE}. We consider a case where the Nash game is characterized as a Cournot competition over a network. Cournot game is one of the most extensively studied economic models for competition among multiple firms, including imperfectly competitive power markets as well as rate control over communication
networks \cite{JohariThesis,KannanShanbhag2012,FacchineiPang2003}. Consider a collection of $d$ firms who compete to sell a commodity over a network with $J$ nodes. The decision of each firm $i \in\{1, \dots, d\}$ includes variables $y_{ij} $ and  $s_{ij}$, denoting the generation and sales of the firm $i$ at the node $j$, respectively. Considering  the definitions $\ y_i \triangleq \left(y_{i1}; \dots; y_{iJ}\right)$ and $s_i \triangleq \left(s_{i1}; \dots; s_{iJ}\right)$, we can compactly denote the decision variable of the $i^{th}$ firm as $x^{(i)} \triangleq \left(y_i; s_i\right) \in \mathbb{R}^{2J}$. The goal of the $i^{th}$ firm lies in minimizing the net cost function $g_i\left(x^{(i)}, x^{(-i)}\right)$ over the network defined as follows:  
\begin{align*}
g_i\left(x^{(i)}{;} x^{(-i)}\right) \triangleq \sum_{j=1}^{{J}} c_{ij} (y_{ij})- \sum_{j=1}^{{J}} s_{ij}p_j \left(\bar{s}_j\right),
\end{align*}
 where $c_{ij}:\mathbb{R}\to \mathbb{R}$ denotes the production cost function of the firm $i$ at the node $j$, $\bar{s}_j \triangleq \sum_{i=1}^{d}s_{ij}$ denotes the aggregate sales from all the firms at the node $j$, and $p_j:\mathbb{R}\to \mathbb{R}$ denotes the price function with respect to the aggregate sales $\bar s_j$ at the node $j$. We assume that the cost functions are linear and the price functions are given as $p_j \left(\bar{s}_j\right) \triangleq \alpha_j - \beta_j \left(\bar{s}_j\right)^{\sigma}$ where $\sigma \geq 1$ and  $\alpha_j$ and $ \beta_j$ are positive scalars. Throughout, we assume that the transportation costs are negligible. We let the generation be capacitated as $y_{ij} \leq \mathcal{B}_{ij}$, where $\mathcal{B}_{ij}$ is a positive scalar for $i \in\{1, \dots, d\}$ and $j \in\{1, \dots, J\}$. Lastly, for any firm $i$, the total sales must match with the total generation. Consequently, the strategy set of the firm $i$ is given as follows:
\begin{align*}
X_i \triangleq \left \{  \left(y_i; s_i\right) \mid\sum_{j=1}^{J} y_{ij} = \sum_{j=1}^{J} s_{ij}, \quad y_{ij}, s_{ij} \geq 0, \quad y_{ij} \leq \mathcal{B}_{ij}, \ \text{ for all } j = 1, \dots, J \right \}. 
\end{align*}
Following the model \cref{prob:best_NE}, we employ the Marshallian aggregate surplus function defined as $f(x)\triangleq \sum_{i=1}^d g_i\left( x^{(i)}; x^{(-i)}   \right) $. We note that the convexity of the function $f$ is implied by $\sigma \geq 1$ and the monotonicity of mapping $F$ is guaranteed when either $\sigma =1$, or when $1<\sigma\leq 3$ and $d\leq \frac{3\sigma-1}{\sigma-1}$ (cf. section $4$ in  \cite{KannanShanbhag2012}).

{\bf The set-up:} In the experiment, we consider a Cournot game among $4$ firms over $3$ nodes. We let the slopes of the linear cost functions take values between $10$ and $50$. We assume that $\alpha_j := 50$ and $\beta_j: = 0.05$ for all $j$, $\mathcal{B}_{ij} := 120$ for all $i$ and $j$, and $\sigma := 1.01$. To report the performance of \cref{alg:aRB-IRG} in terms of the suboptimality, we plot a sample average approximation of $\EXP{f\left(\bar x_N\right)}$ using the sample size of $25$. With regard to the infeasibility, we compute a sample average approximation of $\EXP{\mathrm{GAP}\left(\bar x_N\right)}$ using the same sample size. Following \cref{rem:iter_complexity}, we use $\gamma_k:=\frac{\gamma_0}{\sqrt{k+1}}$ and $\eta_k:=\frac{\eta_0}{\sqrt[4]{k+1}}$. To select the block-coordinates in \cref{alg:aRB-IRG}, we use a discrete uniform distribution. 

\begin{table}[t]
\setlength{\tabcolsep}{0pt}
\centering{
 \begin{tabular}{c || c  c  c}
  {\footnotesize$(\gamma_0,\eta_0)= \ $}& {\footnotesize  $(0.1,0.1)$} & {\footnotesize $(0.1,1)$} & {\footnotesize $(1,0.1)$} \\
 \hline\\
\rotatebox[origin=c]{90}{{\footnotesize $\ln\left(\text{sample ave. gap}\right)$}}
&
\begin{minipage}{.27\textwidth}
\includegraphics[scale=.125, angle=0]{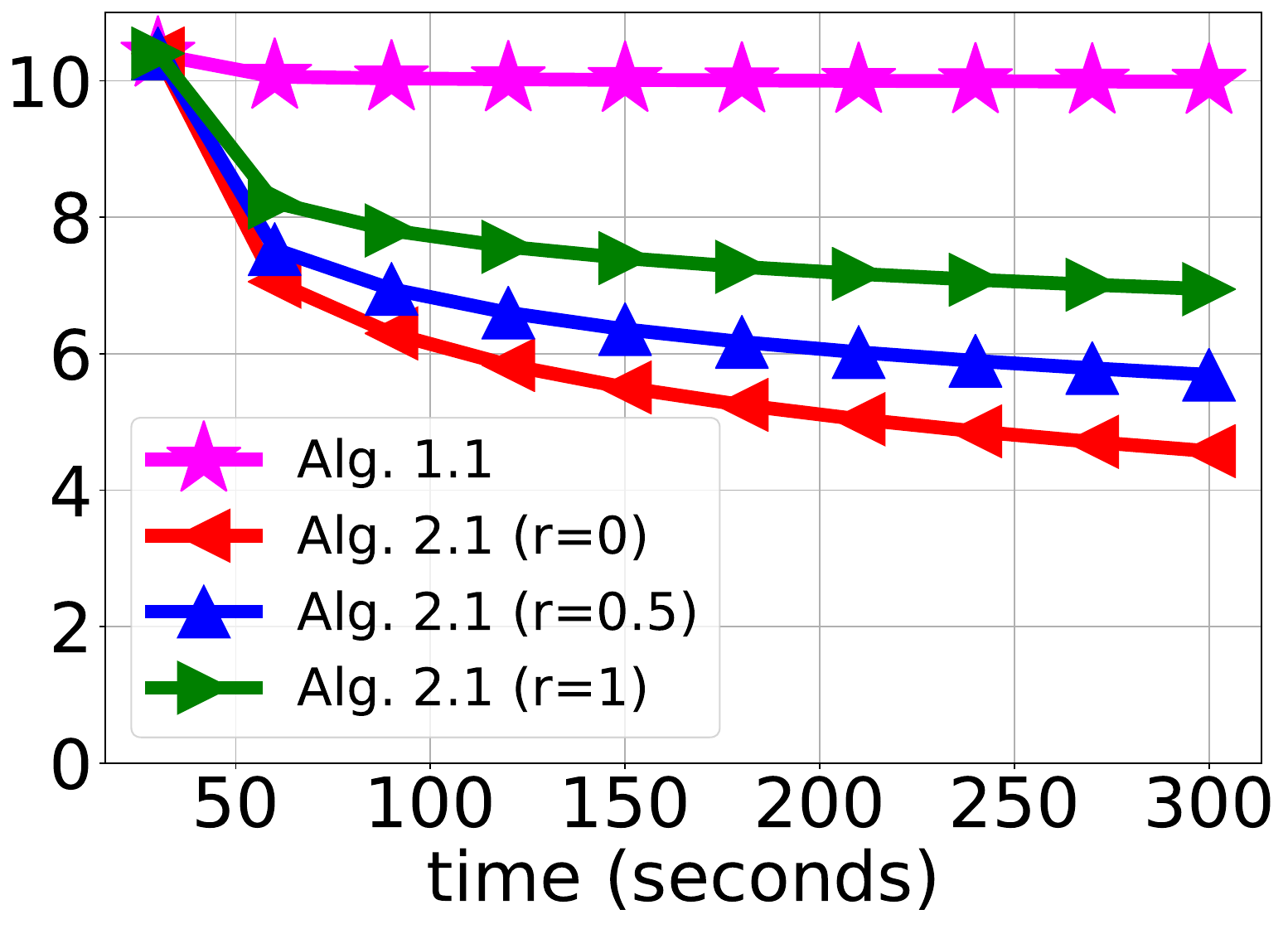}
\end{minipage}
&
\begin{minipage}{.27\textwidth}
\includegraphics[scale=.125, angle=0]{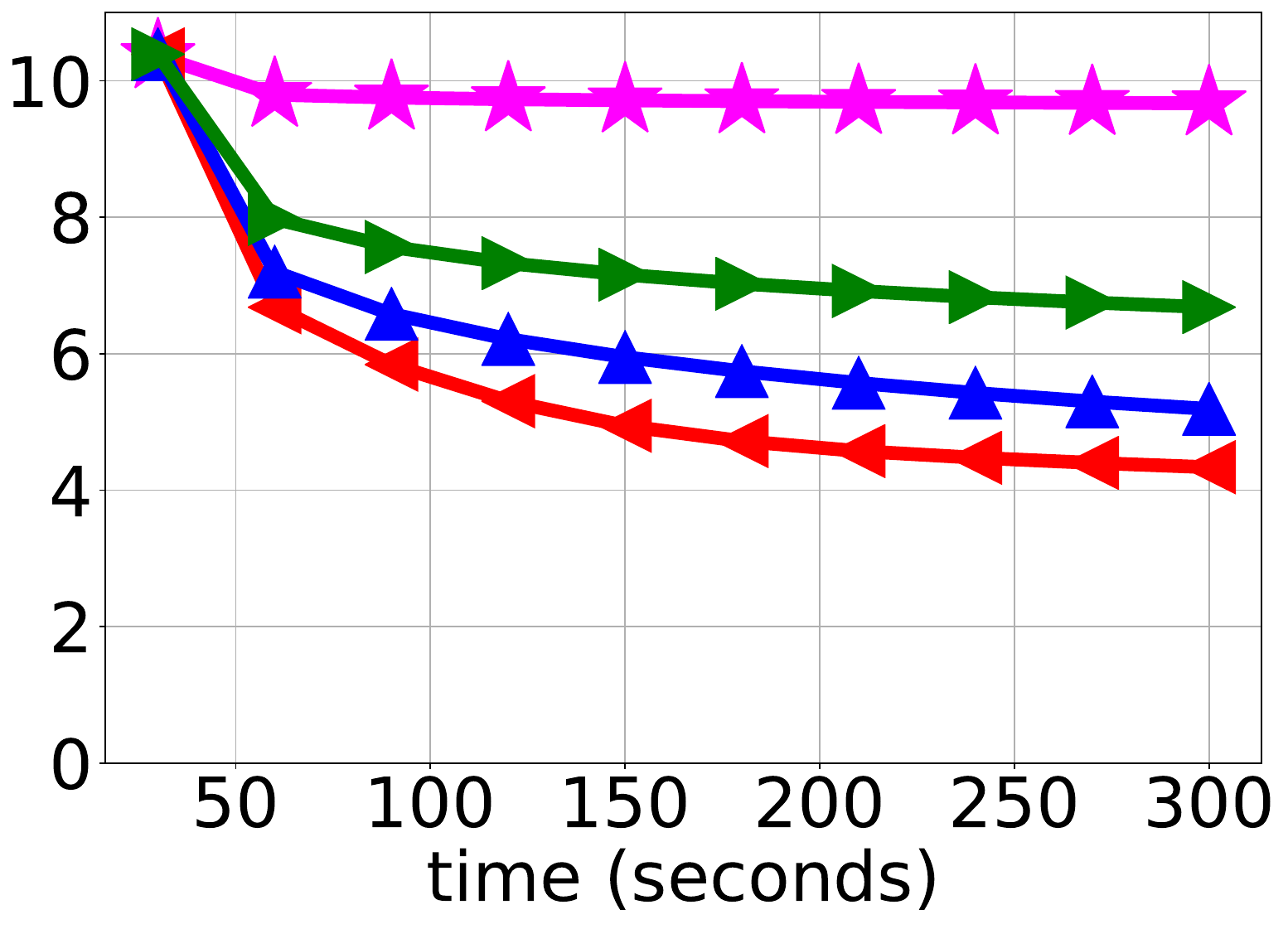}
\end{minipage}
	&
\begin{minipage}{.27\textwidth}
\includegraphics[scale=.125, angle=0]{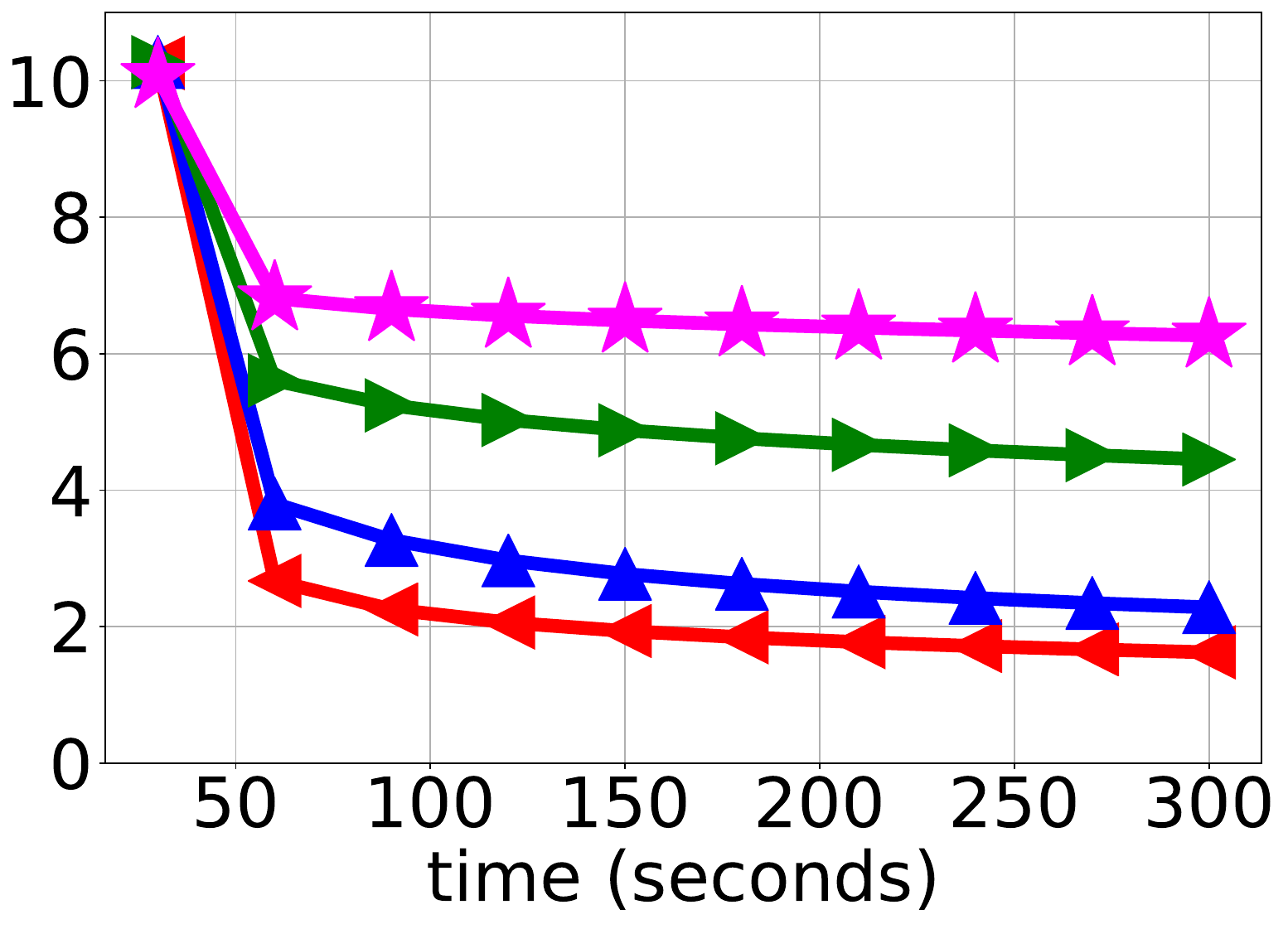}
\end{minipage}
\\
\hbox{}& & & \\
 \hline\\
\rotatebox[origin=c]{90}{{\footnotesize$\ln\left(\text{sample ave. obj.}\right)$}}
&
\begin{minipage}{.27\textwidth}
\includegraphics[scale=.125, angle=0]{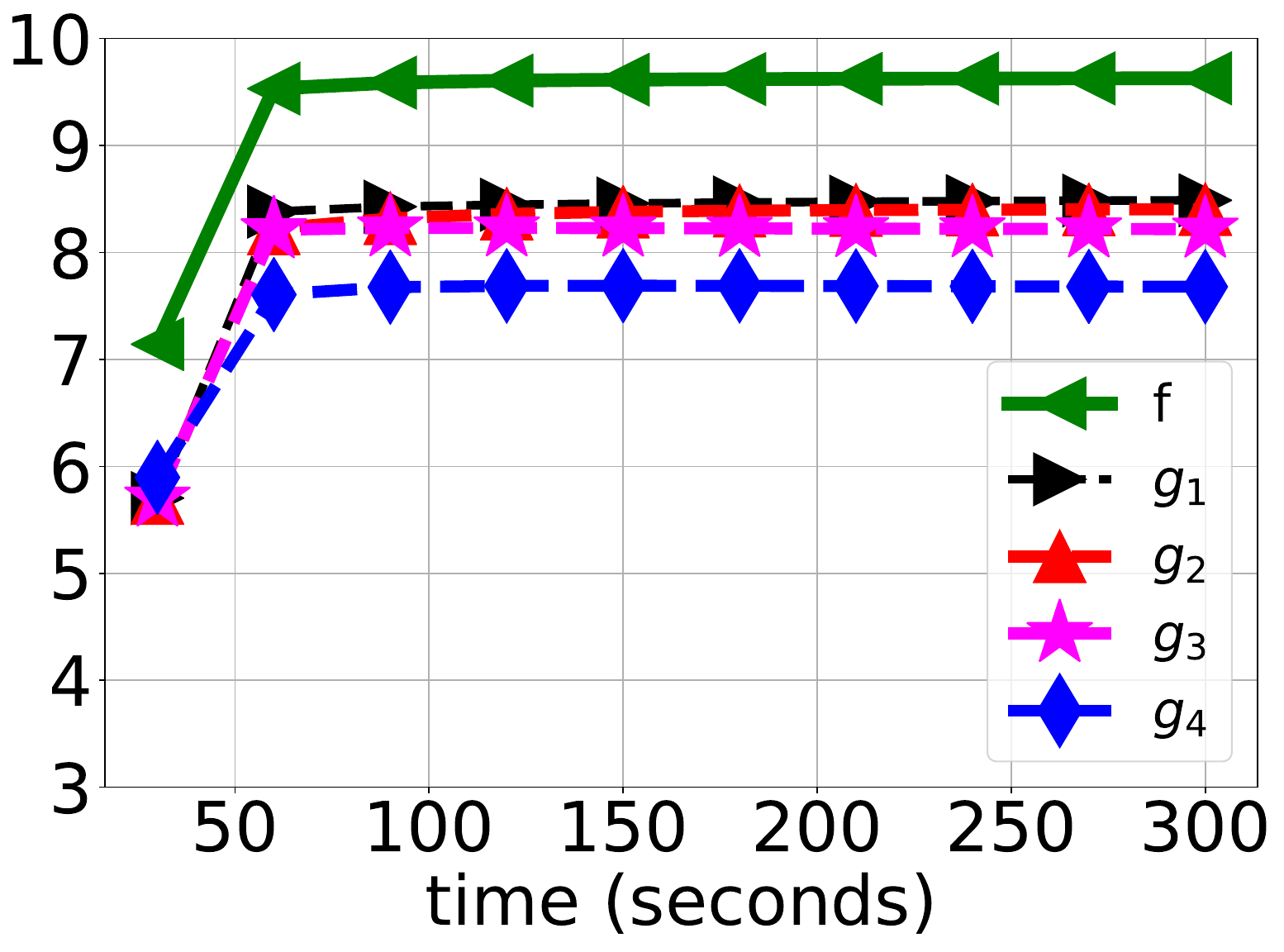}
\end{minipage}
&
\begin{minipage}{.27\textwidth}
\includegraphics[scale=.125, angle=0]{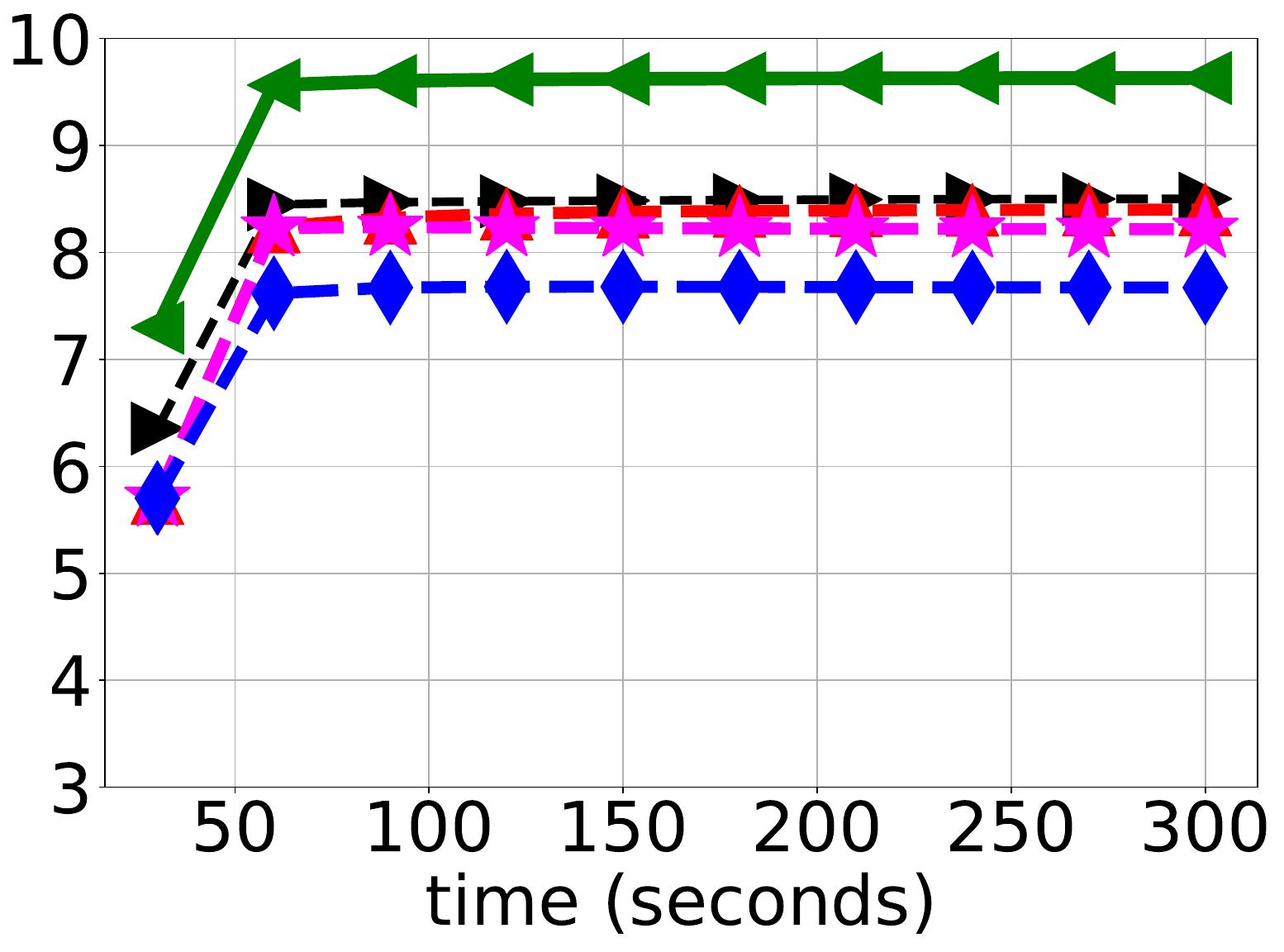}
\end{minipage}
&
\begin{minipage}{.27\textwidth}
\includegraphics[scale=.125, angle=0]{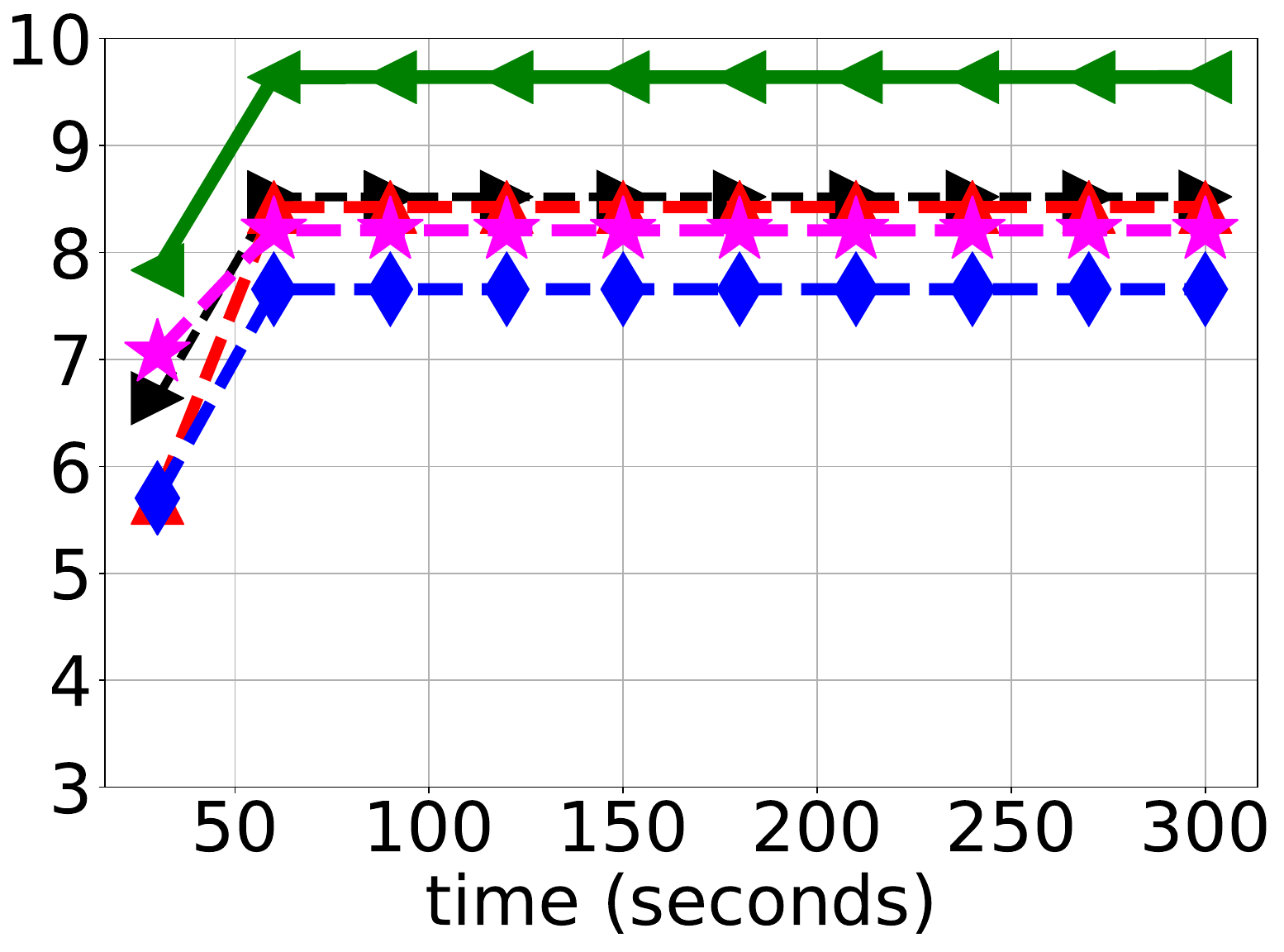}
\end{minipage}
\end{tabular}}
\captionof{figure}{\cref{alg:aRB-IRG} in terms of infeasibility and the objective function value}
\label{fig:comparison_with_SR}
\vspace{-.2in}
\end{table}

\textbf{Results and insights:}
\Cref{fig:comparison_with_SR} shows the experimental results. Here, in the top three figures, we compare the performance of \cref{alg:aRB-IRG} with that of \cref{alg:two-loop} in terms of infeasibility measured by the sample averaged gap function. Importantly, the proposed algorithm performs significantly better than the SR scheme. This claim is supported by considering the different values of the parameter $r$ and the initial conditions of the proposed scheme in terms of the initial stepsize $\gamma_0$ and the initial regularization parameter $\eta_0$. The three figures in the bottom row of \Cref{fig:comparison_with_SR} demonstrate the performance of \cref{alg:aRB-IRG} in terms of reaching a stability in the objective values. This includes the Marshallian objective function $f$ as well as the individual objective functions $g_i$. Note that all the objective values in \Cref{fig:comparison_with_SR} appear to reach to a desired level of stability after around $60$ seconds. This interesting observation could be linked to the impact of the averaging scheme \cref{eqn:ave_step_of_alg}. Generally, it is expected that the trajectories of the objective function values in \Cref{fig:comparison_with_SR} be noisy due to the randomness in the block-coordinate selection rule. However, the weighted averaging scheme employed in \cref{alg:aRB-IRG} appears to induce much robustness with respect to this uncertainty, resulting in an accelerated convergence for the proposed algorithm. 

\section{Conclusions}
\label{sec:conclusions}
Motivated by the applications arising from noncooperative multi-agent networks, we consider a class of optimization problems with Cartesian variational inequality (CVI) constraints. The computational complexity of the solution methods for addressing this class of problems appears to be unknown. We develop a single timescale algorithm equipped with non-asymptotic suboptimality and infeasibility convergence rates. Moreover, in the case where the set associated with the CVI is unbounded, we establish the global convergence of the sequence generated by the proposed algorithm. We apply the method in finding the best Nash equilibrium in a networked Cournot competition. Our experimental results show that the proposed method outperforms the classical sequential regularized schemes.

\appendix 
\section{Additional proofs}
\subsection{Proof of \cref{Lemma 1.3}}\label{app:equiv_convexprogram}
%\begin{proof}
Let us define the function $\phi:\mathbb{R}^n \to \mathbb{R}$ as $\phi(x) \triangleq \frac{1}{2}\|Ax-b\|^2+\frac{1}{2}\sum_{j=1}^J \left(\max\{0,h_j(x)\}\right)^2$. We first note that $\phi$ is a differentiable function such that $\nabla \phi(x) =F(x)$ where $F$ is given by \cref{Lemma 1.3} (e.g., see page 380 in~\cite{BertsekasNLPBook2016}). Next, we also note that $\phi$ is convex. To see this, note that from the convexity of $h_j(x)$, the function $h_j^+(x)\triangleq \max\{0,h_j(x)\}$ is convex. Then, the function $\left(h_j^+(x)\right)^2$ can be viewed as a composition of $s(u)\triangleq u^2$ for $u \in \mathbb{R}$ and the convex function $h_j^+$. Since $h_j^+$ is nonnegative on its domain and $s(u)$ is nondecreasing on $[0,+\infty)$, we have that $\left(h_j^+(x)\right)^2$ is a convex function. As such, $\phi$ is a convex function as well. Consequently, from the first-order optimality conditions for convex programs, we have $\text{SOL}(X,F) = \argmin_{x \in X} \phi(x)$. To show the desired equivalence between problems \cref{prob:main} and \cref{prob:subclass_constrained_opt}, it suffices to show that $\mathcal{X}= \argmin_{x \in X} \phi(x)$ where $\mathcal{X}$ denotes the feasible set of problem \cref{prob:subclass_constrained_opt}. To show this statement, first we let $\bar x \in \mathcal{X}$. Then, from the definition of $\phi(x)$, we have $\phi(\bar x)=0$. This implies that $\bar x \in \argmin_{x \in X} \phi(x)$. Thus, we have $\mathcal{X}\subseteq  \argmin_{x \in X} \phi(x)$. Second, let $\tilde x \in \argmin_{x \in X} \phi(x)$.
The feasibility assumption of the set $\mathcal{X}$ implies that there {exists} an $x_0 \in X$ such that $Ax_0=b$ and $h_j(x_0) \leq 0$ for all $j$. This implies that $\phi(x_0) =0$. From the nonnegativity of $\phi$ and that $\tilde x \in \argmin_{x \in X} \phi(x)$, we must have $\phi(\tilde x)=0$ and $\tilde x \in X$. Therefore, we obtain $A\tilde x=b$, $h_j(\tilde x) \leq 0$ for all $j$, and $\tilde x \in X$. Thus, we have $ \argmin_{x \in X} \phi(x)\subseteq\mathcal{X} $. Hence, we conclude that $\mathcal{X} = \argmin_{x \in X} \phi(x) = \text{SOL}(X,F)$ and the proof is completed.
%\end{proof}
\subsection{Proof of \cref{Lemma 2.10}}\label{app:ave_x_second_formula}
We use induction to show $\bar{x}_{N} = \sum_{k=0}^N \lambda_{k,N} x_k$ for any $N\geq 0$. For $N=0$, the relation holds due to the initialization $\bar{x}_0 :=x_0$ in \cref{alg:aRB-IRG} and that $\lambda_{0,0}=1$. Next, let the relation hold for some $N\geq 0$. From {the hypothesis, equation \cref{eqn:ave_step_of_alg}, and} that $S_{N}=\sum_{k=0}^N\gamma_k^r$ for all $N\geq 0$, we can write: 
\begin{align*}
	\bar{x}_{N+1} = \frac{S_N\bar{x}_N + \gamma_{N+1}^r x_{N+1}}{S_{N+1}} 
	%= \frac{\sum_{k = 0}^{N}\gamma_k^r x_k + \gamma_{N+1}^r x_{N+1}}{S_{N+1}}
	 = \frac{\sum_{k=0}^{N+1}\gamma_k^r x_k}{\sum_{k = 0}^{N+1}\gamma_k^r} = \sum_{k=0}^{N+1} \lambda_{k,N+1} x_k,
	\end{align*}
implying that the induction hypothesis holds for $N+1$. Thus, we conclude that the desired averaging formula holds for {all} $N\geq 0$. To complete the proof, note that since $\sum_{k=0}^N \lambda_{k,N}=1$, under the convexity of the set $X$, we have $\bar x_N \in X$.
%\section*{Acknowledgments}
%We would like to acknowledge the assistance of volunteers in putting
%together this example manuscript and supplement.
\subsection{Proof of \cref{Lemma 2.12}}\label{app:deltas} (a) From \cref{def:deltas}, we can write:
\begin{align*} 
\EXP{\Delta_k\mid \mathcal{F}_k} =F(x_k)-\sum_{i=1}^d\prob{{i}}\prob{{i}}^{-1}{\mathbf{U}}_{i}F_{i}(x_k)= F(x_k) -\sum_{i=1}^d{\mathbf{U}}_{i}F_{i}(x_k) =0.
\end{align*}
The relation $\EXP{\delta_k\mid \mathcal{F}_k}=0$ can be shown in a similar fashion.

\noindent (b) We can write:
\begin{align*}
 &\EXP{\|\Delta_k\|^2\mid \mathcal{F}_k} = \sum_{i=1}^d\prob{{i}}\left\|F(x_k)-\prob{{i}}^{-1}{\mathbf{U}}_{i}F_{i}(x_k)\right\|^2\\
 &=  \sum_{i=1}^d\prob{{i}} \left(\|F(x_k)\|^2 +\prob{{i}}^{-2} \left\|{\mathbf{U}}_{i}F_{i}(x_k)\right\|^2 - 2\prob{{i}}^{-1}F(x_k)^T{\mathbf{U}}_{i}F_{i}(x_k)\right)\\
 & = \|F(x_k)\|^2 +\sum_{i=1}^d\prob{{i}}^{-1} \left\|{\mathbf{U}}_{i}F_{i}(x_k)\right\|^2  -2\sum_{i=1}^d\|F_i(x_k)\|^2 
 % \leq \left(\prob{{min}}^{-1}-1\right)\|F(x_k)\|^2 
 \leq \left(\prob{{min}}^{-1}-1\right)C_F^2.
\end{align*}
The relation $\EXP{\|\delta_k\|^2\mid \mathcal{F}_k} \leq  \left(\prob{{min}}^{-1}-1\right)C_f^2$ can be shown using a similar approach.

\subsection{Proof of \cref{Lemma 2.13}}\label{app:harmonic_bounds} Given $0\leq \alpha <1$, let us define the function $\phi:\mathbb{R}_{++} \to \mathbb{R}$ as $\phi(x) \triangleq x^{-\alpha}$ for all $x>0$. Since $\alpha >0$, the function $\phi$ is nonincreasing. We can write:
\begin{align*}
\sum_{k=0}^{N}\frac{1}{(k+1)^\alpha}
%=\sum_{k=1}^{N+1}k^{-\alpha}
= 1 + \sum_{k=2}^{N+1}\frac{1}{k^\alpha}\leq 1+\int_{1}^{N+1}\frac{dx}{x^\alpha} =1+\frac{(N+1)^{1-\alpha}-1}{1-\alpha}\leq \frac{(N+1)^{1-\alpha}}{1-\alpha},
\end{align*}
implying the desired upper bound. To show that the lower bound holds, we can write:
\begin{align*}
\sum_{k=0}^{N}\frac{1}{(k+1)^\alpha}=\sum_{k=1}^{N+1}\frac{1}{k^\alpha}\geq \int_{1}^{N+2}\frac{dx}{x^\alpha} \geq \int_{1}^{N+1}\frac{dx}{x^\alpha}
%=\frac{(N+1)^{1-\alpha}-1}{1-\alpha} 
\geq \frac{(N+1)^{1-\alpha}-0.5(N+1)^{1-\alpha}}{1-\alpha},
\end{align*}
where the last inequality is obtained using the assumption that $N \geq 2^{\frac{1}{1-\alpha}}-1$. Therefore, the desired lower bound holds as well. This completes the proof.

 \subsection{Proof of \cref{Lemma 4.5}}\label{app:conv_and_rate_tikh}
 
	\noindent (a) From the definition of $x^*$ and $x^*_{\eta_k}$ (cf. \cref{def:Tikh_traj}), we have that:
	\begin{align}
	&F(x^*)^T\left(x-x^*\right) \geq 0 \qquad \text{for all } x\in X, \label{eqn:VI_sol_ineq}	\\
	&\left(F\left(x^*_{\eta_k}\right) + \eta_{{k}}\nabla f\left(x^*_{\eta_{{k}}}\right)\right)^{T}\left(y-x^*_{\eta_{{k}}}\right)\geq 0 \qquad \text{for all } y\in X.\label{eqn:regVI_sol_ineq}
	\end{align} 
	For $x:=x^*_{\eta_k}$ and $y:=x^*$, adding the resulting two relations together, we obtain: 
		\begin{align*}
	\eta_{{k}}\nabla f\left(x^*_{\eta_{{k}}}\right)^T\left( x^*-x^*_{\eta_{{k}}} \right) \geq 	\left( F\left( x^* \right) - F\left(x^*_{\eta_{{k}}}\right) \right)^T \left( x^*-x^*_{\eta_{{k}}} \right).
	\end{align*}
From the monotonicity of the mapping $F$ and the preceding relation, we obtain that $\nabla f\left(x^*_{\eta_{{k}}}\right)^T\left( x^*-x^*_{\eta_{{k}}} \right) \geq 0$. Also, from the strong convexity of $f$, we have:
\begin{align*}
f(x^*) \geq f\left(x^*_{\eta_k}\right) +\nabla f\left(x^*_{\eta_k}\right)^T\left(x^*-x^*_{\eta_k}\right)+\frac{\mu_f}{2}\left\|x^*-x^*_{\eta_k}\right\|^2.
\end{align*}	
From the preceding relations, we {obtain:}
\begin{align}\label{eqn:x_*_x_eta_k_relation}
f(x^*) \geq f\left(x^*_{\eta_k}\right) +\frac{\mu_f}{2}\left\|x^*-x^*_{\eta_k}\right\|^2 \qquad \hbox{for all }k\geq 0.
\end{align}	
{Thus, $f(x^*) \geq f\left(x^*_{\eta_k}\right)$ for all $k\geq 0$.} Recall that from \cref{rem:unique_x_star}, under \cref{assum:problem_unboundedX}, $x^*\in X$ and $x^*_{\eta_k}\in X$ both exist and are unique. {Therefore,  $f\left(x^*_{\eta_k}\right)$ is bounded above for all $k\geq 0$. From this statement and invoking the coercive property of $f$ (implied by the strong convexity of $f$), we can conclude that} $\{x^*_{\eta_k}\}$ is a bounded sequence. Therefore, it must have at least one limit point. Let $\{x^*_{\eta_k}\}_{k \in \mathcal{K}}$ be an arbitrary subsequence such that $\lim_{k \to \infty, \ k \in \mathcal{K}}x^*_{\eta_k} = \hat x$. We show that $\hat x \in \text{SOL}(X,F)$. Taking the limit from both sides of \cref{eqn:regVI_sol_ineq} with respect to the aforementioned  subsequence and using the continuity {of} $F$ and $\nabla f$, we obtain that for all $y \in X$, 
$
\left(F\left(\hat x\right) + \lim_{k \to \infty, \ k \in \mathcal{K}}\eta_{{k}}\nabla f\left(\hat x\right)\right)^{T}\left(y-\hat x\right)\geq 0
$. Note that the mapping $\nabla f\left(\hat x\right)$ is bounded. This is because $\hat x \in X$ (due to the closedness of $X$) and that $\nabla f$ is continuous on the set $X$. Therefore, from the preceding inequality {and $\lim_{k\to \infty}\eta_k=0$}, we obtain $F\left(\hat x\right)^T\left(y-\hat x\right)\geq 0$ for all $y \in X$, implying that $\hat x \in \text{SOL}(X,F)$ and so{,} $\hat x$ is a feasible solution {to} \cref{prob:main}. Next, we show that $\hat x$ is the optimal solution to \cref{prob:main}. From \cref{eqn:x_*_x_eta_k_relation}, continuity of $f$, and neglecting the term $\frac{\mu_f}{2}\left\|x^*-x^*_{\eta_k}\right\|^2 $, we obtain $f\left(x^* \right)  \geq  f\left(\lim_{k\rightarrow \infty, \ k \in \mathcal{K}} x^*_{\eta_k} \right) = f(\hat x)$. Hence, from the uniqueness of $x^*$, all the limit points of $\{x^*_{\eta_k}\}$ fall in the singleton $\{x^*\}$ and the proof is completed.

			\noindent (b) If $x^*_{\eta_k} = x^*_{\eta_{k-1}}$, the desired relation holds. Suppose for $k\geq 1$, we have $x^*_{\eta_k} \neq x^*_{\eta_{k-1}}$. From $x^*_{\eta_{k-1}} \in \text{SOL}\left(X,F+\eta_{k-1}\nabla f\right)$ and $x^*_{\eta_k} \in \text{SOL}\left(X,F+\eta_k\nabla f\right)$, we have that:
	\begin{align*}
	&\left(F\left(x^*_{\eta_{k-1}}\right) + \eta_{{k-1}}\nabla f\left(x^*_{\eta_{{k-1}}}\right)\right)^{T}\left(x-x^*_{\eta_{{k-1}}}\right)\geq 0 \qquad \text{for all } x\in X, \\%\label{eqn:tikh_sol_def_k-1}\\	
	&\left(F\left(x^*_{\eta_k}\right) + \eta_{{k}}\nabla f\left(x^*_{\eta_{{k}}}\right)\right)^{T}\left(y-x^*_{\eta_{{k}}}\right)\geq 0 \qquad \text{for all } y\in X.%\label{eqn:tikh_sol_def_k}
	\end{align*} 
	Adding the resulting two relations together, for $x:=x^*_{\eta_k}$ and $y:=x^*_{\eta_{k-1}}$ we have: 
		\begin{align*}
	\left(-F\left(x^*_{\eta_k}\right) - \eta_{{k}}\nabla f\left(x^*_{\eta_{{k}}}\right)+F(x^*_{\eta_{k-1}}) + \eta_{{k-1}}\nabla f\left(x^*_{\eta_{{k}-1}}\right)\right)^{T}\left(x^*_{\eta_{{k}}}-x^*_{\eta_{{k-1}}}\right)\geq 0. 
	\end{align*}
The monotonicity of $F$ implies that $\left(F\left(x^*_{\eta_k}\right)-F\left(x^*_{\eta_{k-1}}\right)\right)^T\left(x^*_{\eta_{{k}}} - x^*_{\eta_{{k}-1}}\right)\geq 0$. Adding this relation to the preceding inequality, we have:	
	\begin{align*}
	\left(\eta_{{k}}\nabla f\left(x^*_{\eta_{{k}}}\right)-\eta_{{k-1}}\nabla f\left(x^*_{\eta_{{k}-1}}\right)\right)^T\left( x^*_{\eta_{{k}-1}}-x^*_{\eta_{{k}}} \right)\geq 0.
	\end{align*}
	Adding and subtracting the term $\eta_{{k}}\nabla f\left( x^*_{\eta_{{k}-1}} \right)^T\left(x^*_{\eta_{{k-1}}}-x^*_{\eta_{{k}}} \right)$, we obtain:
	\begin{align}\label{eqn:lem_tikh_b_eqn1}
	&\left(\eta_{{k}}-\eta_{{k}-1}\right) \nabla f \left( x^*_{\eta_{{k}-1}} \right)^{T}\left(x^*_{\eta_{{k-1}}}-x^*_{\eta_{{k}}} \right) \geq \nonumber \\ &\eta_{{k}}\left(\nabla f\left(x^*_{\eta_{{k}-1}}\right) - \nabla f\left(x^*_{\eta_{{k}}}\right) \right)^T\left( x^*_{\eta_{{k}-1}}-x^*_{\eta_{{k}}} \right).
	\end{align}
From the strong convexity of function $f$, we have: 
	\begin{align}\label{eqn:lem_tikh_b_eqn2}
	\left(\nabla f \left(x^*_{\eta_{k-1}}\right)-\nabla f \left( x^*_{\eta_{{k}}} \right)\right)^T\left(x^*_{\eta_{k-1}}-x^*_{\eta_{k}}\right) \geq \mu_f \left\Vert x^*_{\eta_k} - x^*_{\eta_{{k}-1}} \right\Vert^2.
	\end{align}
	From \cref{eqn:lem_tikh_b_eqn1} and \cref{eqn:lem_tikh_b_eqn2}, we can write: 
	\begin{align*}
	\left(\eta_{{k}}-\eta_{{k}-1}\right) \nabla f \left( x^*_{\eta_{{k}-1}} \right)^{T}\left(x^*_{\eta_{{k-1}}}-x^*_{\eta_{{k}}} \right) \geq \eta_{{k}}\mu_f \left\Vert x^*_{\eta_k} - x^*_{\eta_{{k}-1}} \right\Vert^2.
	\end{align*}
Using the Cauchy-Schwarz inequality, we obtain:
	\begin{align*}
	\left|\eta_{{k}}-\eta_{{k}-1}\right|\left\Vert \nabla f \left( x^*_{\eta_{{k}-1}} \right)\right\Vert \left\Vert x^*_{\eta_{{k-1}}}-x^*_{\eta_{{k}}} \right\Vert \geq \eta_{{k}}\mu_f \left\Vert x^*_{\eta_k} - x^*_{\eta_{{k}-1}} \right\Vert^2,
	\end{align*}	
	Since $x^*_{\eta_k} \neq x^*_{\eta_{k-1}}$, dividing the both sides by $\eta_k\left\Vert x^*_{\eta_k} - x^*_{\eta_{{k}-1}} \right\Vert$, we obtain:
	\begin{align}\label{eqn:lem_tikh_b_eqn_last}
	\left|1-\frac{\eta_{k-1}}{\eta_k}\right|\left\Vert \nabla f \left( x^*_{\eta_{{k}-1}} \right)\right\Vert \geq\mu_f \left\Vert x^*_{\eta_k} - x^*_{\eta_{{k}-1}} \right\Vert.
	\end{align}
	From part (a), the trajectory $\{x^*_{\eta_k}\}$ is bounded. Also, for any $k\geq 0$, $x^*_{\eta_k} \in X$ by the definition. Since $X$ is closed, there exists a compact set $S {\subset} X$ such that $\{x^*_{\eta_k}\}\subset S$. This statement and the continuity of $\nabla f$ imply that there {exists} $\bar C_f>0$ such that $\left\Vert \nabla f \left( x^*_{\eta_{{k}-1}} \right)\right\Vert \leq \bar C_f$ for all $k\geq 1$. Thus, from \cref{eqn:lem_tikh_b_eqn_last}, we obtain the desired inequality. 

 {\subsection{Proof of \cref{cor:a-IRG_rate_results}}\label{app:a-IRG_rate_results}
Let us rewrite \cref{prob:main} as the equivalent problem: 
\begin{equation}\label{prob:main_equiv}
\begin{aligned}
& {\text{minimize}}
& & f(x) \\
& \text{subject to}
& & x \in \text{SOL}(Y,F),
\end{aligned}
\end{equation}
where $Y \triangleq \prod_{i=1}^{d'}Y_i$ and $d'\triangleq 1$ and $Y_1 \triangleq X$. Note that this setting immediately implies that $Y=X$. Now, let us consider \cref{alg:aRB-IRG} for solving \cref{prob:main_equiv} where we assume that $x_0 \in X$ is an arbitrary fixed vector. Since $d'=1$, \cref{assum:random sample} holds with $\mathrm{Prob}\left(i_k=1\right) = 1$ for all $k\geq 0$. This setting implies that \cref{alg:aRB-IRG} reduces to a deterministic scheme where the step 5 in \cref{alg:aRB-IRG} is equivalent to the following update rule:
		\begin{align}\label{equ:update_rule_aIRG_compact} 
		{x_{k+1}} :=
		\proj[X]{ x_{k} -\gamma_{k}\left(F \left( x_{k}\right) + \eta_{k} \tilde \nabla  f\left(x_{k}\right)\right)},
		\end{align}
where we used $Y=Y_1=X$. Next, we note that from the properties of the Euclidean projection mapping, for any $z \in X$ where $X \triangleq \prod_{i=1}^dX_i$, we have that $\proj[X]{z} = \prod_{i=1}^d\proj[X_i]{z^{(i)}}$. In view of this property, the equation \cref{equ:update_rule_aIRG_compact} compactly represents the $d$ updates given by \cref{equ:update_rule_aIRG}. Therefore, \cref{alg:a-IRG} is equivalent to \cref{alg:aRB-IRG} and thus, all the results in \cref{thm:a-IRG_rate_results} will hold with $\prob{{min}}=1$. Note that in both \cref{eqn:f_rate} and \cref{eqn:gap_rate}, the expectation is eliminated. This completes the proof.  }

\bibliographystyle{siamplain}
\bibliography{ref_siopt_Rev1_v01,references_Rev1_HDK}
\end{document}